\let\mathcal\mathscr
\numberwithin{equation}{section}
\newtheorem{theorem}{Th\'eor\`eme}[section]
\newtheorem{lemma}[theorem]{Lemme}
\newtheorem{cor}[theorem]{Corollaire}
\theoremstyle{definition}
\newtheorem*{rem}{Remarque}
\newtheorem*{rems}{Remarques}
\renewcommand{\phi}{\varphi}
\newcommand{\ZZ}{\mathbb{Z}}
\newcommand{\NN}{\mathbb{N}}
\newcommand{\RR}{\mathbb{R}}
\renewcommand{\leq}{\leqslant}
\renewcommand{\le}{\leqslant}
\renewcommand{\geq}{\geqslant}
\renewcommand{\ge}{\geqslant}
\renewcommand{\a}{\mathbf{a}}
\renewcommand{\d}{{\rm d}}
 \newcommand{\e}{{\rm e}}
\DeclareMathOperator{\Mod}{mod}
\renewcommand{\bmod}[1]{\,(\Mod{#1})}
\renewcommand{\rho}{\varrho}
\begin{document}
\title[
Entiers friables dans des progressions arithm\'etiques de grand module
]{Entiers friables dans des progressions arithm\'etiques de grand module}

\author{R.\ de la Bret\`eche}
\address{
Institut de Math\'ematiques de Jussieu-Paris Rive Gauche\\
Universit\'e  Paris Diderot\\
Sorbonne Paris Cit\'e, UMR 7586\\
Case Postale 7012\\
F-75251 Paris CEDEX 13\\ France}
\email{regis.delabreteche@imj-prg.fr}

\author{D. Fiorilli}
\address{ 
Institut de Math\'ematiques de Jussieu-Paris Rive Gauche, UMR 7586 \\ Case Postale 7012\\
F-75251 Paris CEDEX 13\\ France} 
\email{daniel.fiorilli@uottawa.ca}

\maketitle

\begin{abstract}
We study the average error term in the usual approximation to the number of $y$-friable integers congruent to $a$ modulo $q$, where $a\neq 0$ is a fixed integer. We show that in the range $\exp \{ (\log\log x)^{5/3+\varepsilon}\} \leq y \leq x$ and on average over  $q\leq x/M$ with  $M\rightarrow \infty$ of moderate size, this average error term is asymptotic to $-|a|\Psi(x/|a|,y)/2x$. Previous results of this sort were obtained by the second author for reasonably dense sequences, however the sequence of $y$-friable integers studied in the current paper is thin, and required the use of different techniques, which are specific to friable integers.

\end{abstract}

\section{Introduction}

Quoique tr\`es uniforme lorsque \'etudi\'ee macroscopiquement, la r\'epartition des nombres premiers dans les progressions arithm\'etiques poss\`ede plusieurs irr\'egularit\'es. Un r\'ecent exemple est la r\'epartition des nombres premiers congrus \`a $a$ modulo $q$, en moyenne sur $q$. Plus pr\'ecis\'ement, en d\'efinissant
$$ \psi^*(x;q,a):=\sum_{\substack{ p^k\leq x \\ p^k\equiv a \bmod q \\ p^k \neq a }} \log p, $$ 
le deuxi\`eme auteur \cite{F12} a r\'ecemment d\'emontr\'e que la moyenne de la diff\'erence $\psi^*(x;q,a)-x/\phi(q)$ sur les modules $q\leq Q$  premiers avec $a$ d\'epend fortement de la nature arithm\'etique de l'entier $a$. Ce ph\'enom\`ene se quantifie de la mani\`ere suivante. 

\begin{theorem}[{\cite[Theorem 1.1]{F12}}]
\label{thfi}
Soient un entier $a\neq 0$, $\varepsilon>0$ et $B>0$ un nombre r\'eel. Sous les conditions $x\geq 2$, $1 \leq M \leq (\log x)^B,$ nous avons l'estimation
$$  \frac 1{\frac{\phi(a)}a \frac xM}\sum_{\substack{q\leq \frac xM \\ (q,a)=1}}  \left( \psi^*(x;q,a) - \frac x{\phi(q)} \right) = \mu(a,M) + O_{a,\varepsilon, B}\left( \frac 1{M^{\frac{205}{538}-\varepsilon}}\right),$$
 o\`u
 $$ \mu(a,M):= 
 \begin{cases} 
-\frac 12 \log M -C  &\text{ si } a=\pm 1; \\
-\frac 12 \log p  &\text{ si } a=\pm p^k; \\
 0 &\text{ sinon,}
 \end{cases}$$
 et $C$ est une constante absolue explicite.
\end{theorem}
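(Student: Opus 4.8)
The plan is to open up the inner sum by a divisor–switching argument (Dirichlet's hyperbola method) after inverting the order of summation. Put $Q=x/M$. Inverting summation,
$\sum_{q\le Q,\,(q,a)=1}\psi^*(x;q,a)=\sum_{\substack{n\le x\\ n\ne a}}\Lambda(n)\,\#\{q\le Q:\ q\mid n-a,\ (q,a)=1\}.$
Since $M\le(\log x)^B$ one has $Q^2\ge x\ge|n-a|$, so every divisor $q>Q$ of $n-a$ is complemented by a divisor $d=(n-a)/q$ with $d\le M+O_a(1)$. Writing the count as $\tau_a(n-a)$ (number of divisors coprime to $a$) minus the number of divisors $q>Q$, the sum splits as $\mathcal T-\mathcal T'$, where $\mathcal T:=\sum_{n\le x,\,n\ne a}\Lambda(n)\tau_a(n-a)$ is a Titchmarsh-divisor-type sum and, after disentangling the condition $((n-a)/d,a)=1$ by Möbius (the terms with a common factor contributing only $O_a(M\log x)$),
$\mathcal T'=\sum_{d<M,\,(d,a)=1}\bigl(\psi(x;d,a)-\psi(a+d\lfloor Q\rfloor;d,a)\bigr)+O_a(M\log x).$

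Next I would evaluate $\mathcal T'$ by the prime number theorem in progressions: the moduli $d<M\le(\log x)^B$ are small and the arguments exceed $x/M\ge\sqrt x$, so Siegel--Walfisz gives $\mathcal T'=x\,S(M)+O_{a,B}\bigl(x\exp(-c\sqrt{\log x})\bigr)$ with $S(M):=\sum_{d<M,\,(d,a)=1}\tfrac{1-d/M}{\phi(d)}$, the error being negligible for every $B$. The quantity $xC_a(Q)$ to be subtracted is handled by the classical asymptotic $C_a(Q):=\sum_{q\le Q,\,(q,a)=1}1/\phi(q)=\alpha_a^{(1)}\log Q+\alpha_a^{(0)}+O_a(Q^{-1}\log Q)$, with $\alpha_a^{(1)}=\tfrac{\phi(a)}a\prod_p(1+\tfrac1{p(p-1)})$. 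For $\mathcal T$ I would split the divisor $d\mid n-a$ at $\sqrt x$ and apply the same switch to the range $d>\sqrt x$; this writes $\mathcal T$ in terms of $xC_a(\sqrt x)$ and a sum of exactly the same shape as $\mathcal T'$ but with threshold $\sqrt x$ in place of $M$, all up to $O_{a,A}\bigl(x(\log x)^{-A}\bigr)$ for every $A$, the Bombieri--Vinogradov theorem (together with its standard variant for sums with a modulus-dependent cutoff) controlling the resulting error terms. One obtains $\mathcal T=\alpha_a^{(1)}x\log x+c_a x+O_{a,A}\bigl(x(\log x)^{-A}\bigr)$.

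Assembling the three pieces, and using $\log Q=\log x-\log M$ (and $\log\sqrt x=\tfrac12\log x$ inside the evaluation of $\mathcal T$), the $x\log x$ contributions cancel, and the $x$ contributions cancel against one another as well --- an identity among the relevant Euler-product constants which I would check directly. What survives lives at the scale $x/M$ and comes solely from the secondary behaviour of $S(M)$. Writing $\tfrac1{\phi(d)}=\tfrac1d\sum_{e\mid d}\mu^2(e)/\phi(e)$ and separating integer from fractional parts reduces $S(M)$ to smooth terms plus the weighted Dirichlet-divisor sum $\tfrac1M\sum_{e<M,\,(e,a)=1}\tfrac{\mu^2(e)}{\phi(e)}\{M/e\}$. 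Its principal part, evaluated through the asymptotics of $\sum_{n\le y}n/\phi(n)$ --- whose secondary term is $-\tfrac12\log y$ precisely when $a=\pm1$ --- is what produces $\mu(a,M)$; the arithmetic factors $\prod_{p\mid a}(\cdots)$ entering through the $(d,a)=1$ restrictions account both for the normalisation by $\tfrac{\phi(a)}a$ and for the trichotomy $a=\pm1$ / $a=\pm p^k$ / otherwise.

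The main obstacle is twofold. First, the bookkeeping at scale $x/M$ is genuinely delicate: this is the same order of magnitude as the fractional-part fluctuations $\{x/M\}$ and $\{M/e\}$, as the terms $aC_a(\cdot)$, and as the floor corrections in $\lfloor x/M\rfloor$, so one must show that these either cancel or coalesce into $\mu(a,M)$ rather than contaminating the error. Second, and quantitatively decisive, is the estimation of the error term in the weighted divisor sum above: this is governed by Voronoi-type bounds for the Dirichlet divisor problem and its twists (Huxley's estimate being the state of the art), and optimising these inputs against the Bombieri--Vinogradov saving used for $\mathcal T$ and against the choice of the splitting parameter is what yields the exponent $\tfrac{205}{538}$.
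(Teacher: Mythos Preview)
This theorem is not proved in the present paper: it is quoted verbatim from \cite{F12} in the introduction, as motivation for the paper's own main result on friable integers (Th\'eor\`eme~\ref{th}). There is therefore no proof in this paper to compare your proposal against.

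For what it is worth, your outline is broadly in the spirit of the argument in \cite{F12}: the divisor-switching reduction (writing divisors $q>Q$ of $n-a$ as $(n-a)/d$ with $d$ small), the Bombieri--Vinogradov input for the Titchmarsh-divisor piece, and Siegel--Walfisz for the small-modulus sum $\mathcal T'$ are indeed the structural ingredients. Where your sketch departs from \cite{F12} is in the endgame: you frame the source of the exponent $205/538$ as a Voronoi-type estimate for a weighted lattice-point/divisor sum, whereas in \cite{F12} the relevant secondary sums are handled by Perron's formula and a contour shift, with the exponent coming directly from Huxley's subconvexity bound $\zeta(\tfrac12+it)\ll_\varepsilon |t|^{32/205+\varepsilon}$ (this is the same $\vartheta=32/205$ that appears in Lemme~\ref{zeta} of the present paper). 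These are closely related viewpoints, but the contour-integral packaging makes the ``delicate bookkeeping at scale $x/M$'' that you flag more transparent, and it is what the present paper imitates in its treatment of $\sigma_2$ and $\widetilde\sigma_1$ via double Perron integrals.
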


Il se trouve que de telles irr\'egularit\'es existent aussi pour d'autres suites arithm\'etiques $\mathcal A=\{\a(n)\}$, comme par exemple les entiers pouvant s'\'ecrire comme somme de deux carr\'es, ou les entiers cribl\'es \cite{F13}. Une condition importante pour utiliser les techniques de \cite{F13} est que la suite $\mathcal A$ soit assez dense dans $\mathbb N$. De plus, on doit avoir des r\'esultats d'\'equidistribution de $\mathcal A$ dans toutes les progressions arithm\'etiques o\`u la suite $\mathcal A$ est support\'ee (pas n\'ecessairement seulement les progressions $a\bmod q$ avec $(a,q)=1$). 

Le but de notre article est d'\'etudier une suite clairsem\'ee, pour laquelle les techniques de~\cite{F13} ne s'appliquent pas. La suite la plus naturelle et la plus int\'eressante \`a \'etudier est la suite caract\'eristique  des entiers $y$-friables, compte tenu de toute la th\'eorie et des techniques qui ont \'et\'e d\'evelopp\'ees pour son \'etude. En particulier, de puissants r\'esultats d'\'equir\'epartition dans les progressions arithm\'etiques avec $(a,q)=1$ ont \'et\'e obtenus r\'ecemment par Soundararajan \cite{S08}, Harper \cite{H12a}, \cite{H12} et Drappeau \cite{D13}. Il semble donc appropri\'e de d\'emontrer un analogue du Th\'eor\`eme \ref{thfi} pour la suite des entiers friables ; c'est le contenu de notre r\'esultat principal, le Th\'eor\`eme \ref{th}. Avant d'\'enoncer ce r\'esultat, nous allons introduire quelques notations et pr\'esenter le th\'eor\`eme de Drappeau.

Soit $P(n)$ le plus grand facteur premier d'un entier $n$ avec la convention $P(1)=1$. On dit qu'un entier est $y$-friable si $P(n)\leq y$.  
Nous consid\'erons 
\begin{align*}
\Psi(x,y;a,q)&:=\sum_{\substack{n\in S(x,y)\\n\equiv a\bmod q}}1,\qquad 
\Psi^*(x,y;a,q):=\Psi(x,y;a,q)-{\bf 1}_{S(x,y)}(a),
\cr \Psi_q(x,y )&:=\sum_{\substack{n\in S(x,y)\\(n,q)=1}}1,\qquad \Psi(x,y):=\Psi_1(x,y).
\end{align*} 
Nos r\'esultats concernent la moyenne sur $q$ de  
$$E^*(x,y;a,q):=\Psi^*(x,y;a,q)-\frac{\Psi_q(x,y)}{ \phi(q)}$$
lorsque $(a,q)=1$.
Dans toute la suite, nous utilisons les notations usuelles 
$$u:=\frac{\log x}{\log y},\qquad H(u):=\exp\Big\{ \frac{u}{(\log (u+1))^2}\Big\},\qquad L_\varepsilon(y):=\exp\{ (\log y)^{3/5-\varepsilon}\}.$$

\`A la suite de Fouvry--Tenenbaum \cite{FT96} et Harper \cite{H12}, Drappeau montre dans \cite{D13} un r\'esultat de type  Fouvry--Iwaniec pour les entiers friables permettant de consid\'erer des modules $q$ plus grands que $\sqrt{\Psi(x,y)}$. Son r\'esultat est le suivant.

\begin{theorem}[{\cite{D13}}]\label{thsd}
Soient $\varepsilon>0$ et $A>0$. Il existe des constantes absolues $C,\delta>0$ telles que, lorsque $(\log x)^C\leq y\leq x^{1/C}$, nous ayons
\begin{equation}\label{sd1}\sum_{ \substack{q\leq x^{3/5-\varepsilon}\\ (q,a)=1}}\!\!\!\tau(q)^2\max_{z\leq x}| E^*(z,y;a,q)| 
\ll_A \Psi(x,y)\big\{ H(u)^{-\delta}(\log x)^{-A} +y^{-\delta}\big\} \qquad (|a|\leq x^{\delta}),
\end{equation}
\begin{equation}\label{sd2}\sum_{ \substack{q\leq x^{6/11-\varepsilon}\\ (q,a)=1}}\!\!\!\!\!\tau(q)^2\max_{z\leq x} | E^*(z,y;a,q)| \ll_A \Psi(x,y)\big\{ H(u)^{-\delta}(\log x)^{-A} +y^{-\delta}\big\} \quad (|a|\leq x^{1-\varepsilon}).\end{equation} 
\end{theorem}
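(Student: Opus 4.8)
\textbf{Sketch of a proof of Theorem \ref{thsd}.} The plan is to follow the Fouvry--Iwaniec strategy for equidistribution in progressions to large moduli, in the form adapted to friable integers by Fouvry--Tenenbaum \cite{FT96} and Harper \cite{H12}: decompose the characteristic function of $S(x,y)$ into \emph{Type I} and \emph{Type II} pieces, bound each piece on average over $q$ with the weight $\tau(q)^2$, and recombine. Fix a small $\alpha=\alpha(\varepsilon)>0$ and set $z:=x^{\alpha}$. Using a combinatorial identity for the friable indicator of Vaughan or Heath-Brown type (equivalently, iterating Buchstab's identity on the prime factors in $(z,y]$ and truncating after $O(1/\alpha)$ steps), one writes $\mathbf 1_{S(x,y)}$, up to an admissible error, as a sum of $O((\log x)^{O(1)})$ terms, each either of \emph{Type I} (one variable free, the other confined to a short segment) or a genuine bilinear \emph{Type II} form $\sum_{m\sim M}\sum_{k\sim K}\alpha_m\beta_k$ with $x^{\alpha}\ll M\ll x^{1/2}$, $MK\asymp X\le x$, and with $\alpha_m,\beta_k$ dominated by divisor functions. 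The hypothesis $y\le x^{1/C}$ (i.e.\ $u\ge C$) is precisely what leaves enough room in the factorisation to perform this decomposition while keeping the complementary variable long; the hypothesis $y\ge(\log x)^{C}$ places us in the range where the distribution of $\Psi(x,y)$ in progressions to moderate moduli is under analytic control (Soundararajan \cite{S08}, Harper \cite{H12}), so that the error term claimed in \eqref{sd1}--\eqref{sd2} is in fact a strong saving over the trivial bound $\Psi(x,y)(\log x)^{O(1)}$.

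For the Type I contributions I would invoke the Bombieri--Vinogradov theorem for friable integers (Fouvry--Tenenbaum \cite{FT96}, Harper \cite{H12}), in the divisor-weighted form needed to absorb $\tau(q)^2$ through a large sieve inequality: when the free variable is $\gg x^{1/2}$ this already yields the desired bound for every $q\le x^{1/2}H(u)^{-\delta}$, and when the constrained variable is genuinely short one simply splits the residue class $a\bmod q$ and estimates directly. These same inputs, together with the Hildebrand--Tenenbaum saddle-point asymptotics for $\Psi(x,y)$, also produce the main term: the diagonal of the dispersion argument below must reconstruct $\Psi_q(x,y)/\phi(q)$ (summing the pieces back via Buchstab's identity run in reverse), which requires good control of $\Psi$ along progressions to moduli up to a small power of $x$ --- exactly what \cite{S08} and \cite{H12} provide.

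The heart of the proof is the Type II estimate for moduli $q$ exceeding $x^{1/2}$, and here I would run Linnik's dispersion method. For a fixed bilinear form one applies Cauchy--Schwarz in the $k$-variable to discard $\beta_k$, opens the square, and carries out the $q$-summation on the resulting system: the terms in which the two $m$-variables coincide (combined with a smooth-number count) rebuild the main term, while the remaining terms, after detecting the two congruences modulo $q$ by additive characters and completing the $k$-sum, reduce to averages --- over $q$ and over the two $m$-variables in dyadic boxes --- of incomplete Kloosterman sums of the shape $\sum_{c}e\big((a\,\overline{m_1}\,c+\overline{m_2}\,c)/q\big)$ together with subsidiary Ramanujan-type pieces, all weighted by smooth cutoffs.

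The main obstacle --- and the step that pins down the exponents $6/11$ and $3/5$ --- is to estimate these averaged Kloosterman sums beyond the trivial range. Here I would appeal to the Deshouillers--Iwaniec spectral large sieve for sums of Kloosterman sums (via Kuznetsov's trace formula), which beats the trivial bound exactly in the window that, after balancing $MK\asymp x$ against $q\le Q$, permits $Q=x^{6/11-\varepsilon}$; when $|a|\le x^{\delta}$, an extra coprimality gain in the completion step relaxes the constraint to $Q=x^{3/5-\varepsilon}$. Two technical burdens must be carried throughout: uniformity in $a$ (up to $|a|\le x^{1-\varepsilon}$ in \eqref{sd2}), which requires tracking the $a$-dependence through the completion of sums and checking that moduli sharing a large common factor with $a$ contribute negligibly; and the weight $\tau(q)^2$, which forces the use of Shiu-type pointwise bounds and divisor-weighted (rather than classical) large-sieve inequalities at every stage. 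Finally one reassembles: over the $O((\log x)^{O(1)})$ pieces the diagonal contributions telescope to $\Psi_q(x,y)/\phi(q)$, each error is $O\big(\Psi(x,y)(H(u)^{-\delta'}(\log x)^{-A'}+y^{-\delta'})\big)$, and the extra $(\log x)^{O(1)}$ from the number of pieces is absorbed by decreasing $\delta$ and $A$.
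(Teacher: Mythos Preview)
Your sketch is a reasonable outline of the Fouvry--Iwaniec dispersion strategy as adapted to friable integers, and it does capture the main ingredients of Drappeau's argument in \cite{D13} (combinatorial decomposition, Type~I/II splitting, dispersion, Deshouillers--Iwaniec spectral bounds for sums of Kloosterman sums giving the exponents $3/5$ and $6/11$). However, there is nothing to compare against here: the present paper does \emph{not} prove Theorem~\ref{thsd}. It is quoted as a black box from \cite{D13}, with the sole observation (in the remark following the statement) that Drappeau's original formulation omits the weight $\tau(q)^2$ and the supremum over $z\le x$, but that his proof yields these refinements without further work. The paper then uses Theorem~\ref{thsd} purely as an input (see \eqref{qpetit} and the proof of Lemma~\ref{lemsigma}).

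So your write-up is not wrong, but it answers a different question: you have sketched Drappeau's proof rather than anything the authors of this paper do. If the intent was to supply what the paper omits, you should make that explicit and cite \cite{D13} for the details; if the intent was to reproduce the paper's treatment of this statement, then the correct answer is simply that the theorem is imported from \cite{D13} with the minor strengthening noted above.
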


\noindent{\it Remarque.} Le r\'esultat de Drappeau a \'et\'e \'enonc\'e sans le facteur $\tau(q)^2$ ni le maximum sur $z\leq x$,  mais il est ais\'e de voir que sa d\'emonstration fournit un r\'esultat aussi fort. 
\medskip

Nous d\'efinissons le domaine $(H_\varepsilon)$ en $(x,y)$ par 
$$x\geq 3,\qquad \exp\{ (\log\log  x)^{5/3+\varepsilon}\big\}\leq y\leq x.$$
Hildebrand \cite{H86} a montr\'e la validit\'e de la formule 
\begin{equation}\label{estHild}\Psi(x,y)=x\rho(u)\Big\{ 1+O\Big(\frac{\log (u+1)}{\log y}\Big)\Big\}
\end{equation}
uniform\'ement, pour chaque $\varepsilon>0$, dans le domaine $(H_\varepsilon)$ o\`u $\rho$ d\'esigne la fonction de Dickman qui est continue en $u=1$ d\'erivable sur $]1,\infty[$, valant $1$ sur $]0,1]$ et  solution de l'\'equation diff\'erentielle aux diff\'erences 
$$u\rho'(u)=-\rho(u-1).$$

Lorsque $M\geq 1$ et $a\in \ZZ\smallsetminus \{ 0\}$, nous \'etudions
\begin{equation}\label{defsigma}\sigma(x,y,M; a):=\sum_{ \substack{q\leq x/M\\ (q,a)=1}}E^*(x,y;a,q).\end{equation} 
 
Nous notons $\tau(a)=\tau_2(a)$ le nombre de diviseurs de $a$, $\tau_{k+1}(a)=(1*\tau_k)(a)$ lorsque $k\geq 2$ et $\phi$ la fonction indicatrice d'Euler.

Notre r\'esultat principal est le suivant.
\begin{theorem} \label{th}
Soient $\varepsilon\in\, ]0,1/2[$ et $A>0$. Il existe des constantes absolues $C,\delta>0$ telles  que,    lorsque $(x,y)$ satisfait $(H_\varepsilon)$ et $  y\leq x^{1/C}$,  $1\leq  |a|\leq M^{1/2-\varepsilon}$, et
\begin{equation}\label{inegM}M\leq \min\big\{H(u)^{ \delta} (\log x)^{ A} ,y^{ \delta} \big\},\end{equation}  nous ayons 
\begin{equation}\label{estsigmaM1}\begin{split}\sigma (x,y,M; a) =& -\frac{x\phi(|a|)}{2M |a|}\rho(u_a) +O_{\varepsilon,A}\bigg( \frac{\tau_3(a)^2\Psi(x,y)}{ M L_\varepsilon(M) }  \bigg),\end{split}\end{equation}
avec $u_a:=u-(\log |a|)/(\log y).$ 
\end{theorem}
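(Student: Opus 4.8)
The plan is to use Drappeau's Theorem~\ref{thsd} on the range of moduli where it is available, to treat the complementary (large) moduli by a divisor reflection that brings us back to friables in progressions of small modulus (again accessible to Theorem~\ref{thsd}), and to extract the main term from the resulting explicit sums of shape $\Psi_q(\cdot,y)/\phi(q)$ by means of Hildebrand's estimate~\eqref{estHild}, partial summation, and the analytic properties of $\rho$. First some reductions: since $M\le y^{\delta}\le x^{\delta}$ and $1\le|a|\le M^{1/2-\varepsilon}$, the integer $a$ is $y$-friable with $|a|\le x^{\delta/2}$, so the $n\le|a|$ (relevant only for $a>0$) and the term $\mathbf 1_S(a)$ together cost $O(|a|^2)$, negligible against the claimed error; also, one may assume $\varepsilon\le\eta$ for a fixed small $\eta\in(0,\tfrac1{11})$, the cases $\varepsilon>\eta$ following from $\varepsilon=\eta$ because $(H_\varepsilon)\subset(H_\eta)$, $|a|\le M^{1/2-\varepsilon}$ is stronger, and $L_\varepsilon(M)\le L_\eta(M)$. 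Expanding $\Psi^*$ and inverting the order of summation,
\begin{equation*}
\sum_{\substack{q\le x/M\\(q,a)=1}}\Psi^*(x,y;a,q)=\sum_{\substack{n\in S(x,y)\\ n\ne a}}\#\{q\le x/M:\ q\mid n-a,\ (q,a)=1\}.
\end{equation*}
Set $Q_0:=x^{6/11-\eta}$ (with $\eta$ playing the role of Drappeau's $\varepsilon$). For $q\le Q_0$, \eqref{sd2} applies (as $|a|\le x^{1-\eta}$); keeping the weight $\tau(q)^2$ it gives
\begin{equation*}
\sum_{\substack{q\le Q_0\\(q,a)=1}}\Psi^*(x,y;a,q)=\sum_{\substack{q\le Q_0\\(q,a)=1}}\frac{\Psi_q(x,y)}{\phi(q)}+O\Big(\Psi(x,y)\big(H(u)^{-\delta_0}(\log x)^{-A_0}+y^{-\delta_0}\big)\Big),
\end{equation*}
$\delta_0$ being Drappeau's constant and $A_0$ at our disposal.

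For the moduli $Q_0<q\le x/M$, replace in the identity above each divisor $q\mid n-a$ by its cofactor $d=(n-a)/q$. The conditions $q\le x/M$, $q>Q_0$, $(q,a)=1$ become $d\ge(n-a)M/x$, $d<(n-a)/Q_0$, and $\big((n-a)/d,a\big)=1$; in particular $d<x/Q_0=x^{5/11+\eta}$ and $n$ lies in a window $\big(a+dQ_0,\ \min(a+xd/M,x)\big]\subset(0,x]$. So this part is $\sum_{d<x^{5/11+\eta}}N_d+O(|a|^2)$, with $N_d$ the number of $n\in S(x,y)$ in that window with $n\equiv a\bmod d$ and $((n-a)/d,a)=1$. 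To evaluate $N_d$ I remove $((n-a)/d,a)=1$ by Möbius inversion over $e\mid a$, using $\mathbf 1_{((n-a)/d,a)=1}=\sum_{e\mid a}\mu(e)\mathbf 1_{de\mid n-a}$; for each $e$ I put $g=\gcd(de,a)$ and substitute $n=gn'$, turning the progression into one of modulus $de/g$ with residue coprime to it, at the scale $x/g$. Since $g\mid a$ is tiny and $de/g\le d|a|$ is $<x^{6/11-\eta}$ and $<(x/g)^{6/11-\eta}$ for $\delta$ small, each inner count is given by~\eqref{sd2} at scale $x/g$: a main term $\Psi_\bullet(\cdot,y)/\phi(\bullet)$ plus an error $\ll\max_{z\le x}|E^*(z,y;\cdot,\bullet)|$. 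Summing over $e\mid a$ and over $d$ while carrying $\tau(q)^2$ through~\eqref{sd2}, the total error so far is
\begin{equation*}
\ll\ \tau_3(a)^2\,\Psi(x,y)\big(H(u)^{-\delta_0}(\log x)^{-A_0}+y^{-\delta_0}\big),
\end{equation*}
the factor $\tau_3(a)^2$ arising from the divisor sums over $a$ used for the two coprimality conditions $(q,a)=1$ and $((n-a)/d,a)=1$.

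Collecting everything, $\sum_{q\le x/M,(q,a)=1}\Psi^*(x,y;a,q)$ becomes an explicit combination of sums of $\Psi_\bullet(N,y)/\phi(\bullet)$ (moduli $\bullet\le x^{5/11+\eta}$, levels $N\le x$) plus the displayed error. Subtracting $\sum_{q\le x/M,(q,a)=1}\Psi_q(x,y)/\phi(q)$ and expanding each $\Psi_\bullet(N,y)$ by Möbius into $\Psi(\cdot,y)$'s, one is reduced to a finite combination of sums $\sum_d\Psi(cN_d/d,y)/\phi(d)$. Using~\eqref{estHild}, the mean values $\sum_{q\le Q,(q,a)=1}1/\phi(q)$, partial summation, and the relation $u\rho'(u)=-\rho(u-1)$ (to bound ratios $\rho(v)/\rho(v')$ for nearby $v,v'$ and to perform Laplace-type evaluations of the integrals $\int\rho(\log t/\log y)\,\mathrm{d}t$), one checks that the contributions of size $\Psi(x,y)\log x$, and even those of size $\tfrac{\phi(|a|)}{|a|}\Psi(x,y)$, cancel — essentially because the two reflection ranges $[M,x/Q_0)$ and $(Q_0,x/M]$ have equal logarithmic length and equal coprimality densities, and the two Laplace evaluations $\sum_{d<M}\Psi(xd/M,y)/\phi(d)$ and $\sum_{d<x/Q_0}\Psi(dQ_0,y)/\phi(d)$ produce the same leading constant. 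What survives is $-\dfrac{x\phi(|a|)}{2M|a|}\rho(u_a)$: the $\tfrac12$ is the divisor-switching symmetry, $\phi(|a|)/|a|$ comes from the coprimality sieving, and the shift to the argument $u_a$ is traceable to the scales $x/g$ introduced above (the term $g=|a|$). The errors left at this stage are $\ll\tau_3(a)^2\Psi(x,y)/(ML_\varepsilon(M))$.

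Finally one checks that the Drappeau error from the first two paragraphs is also $\ll\tau_3(a)^2\Psi(x,y)/(ML_\varepsilon(M))$: from $M\le H(u)^{\delta}(\log x)^{A}$ and $M\le y^{\delta}$ one has $ML_\varepsilon(M)\le H(u)^{\delta}(\log x)^{A}\exp\{(\log M)^{3/5-\varepsilon}\}$, and since $(\log M)^{3/5-\varepsilon}=o(u/\log^2(u+1))$ and $(\log M)^{3/5-\varepsilon}=o(\log y)$, the product $ML_\varepsilon(M)\big(H(u)^{-\delta_0}(\log x)^{-A_0}+y^{-\delta_0}\big)\to0$ as soon as $\delta<\delta_0$ and $A_0>A$. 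The main obstacle is the third paragraph: the delicate bookkeeping required to see that the numerous terms of size up to $\tfrac{\phi(|a|)}{|a|}\Psi(x,y)$ cancel, and that the genuinely smaller surviving quantity is exactly $-\frac{x\phi(|a|)}{2M|a|}\rho(u_a)$ — with the Dickman function rather than $\Psi(x/|a|,y)$, the two differing by more than the permitted error when $M$ and $y$ are both large. This is precisely the part special to the thin sequence of friable integers, for which the density requirements of~\cite{F13} fail.
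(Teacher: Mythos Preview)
Your opening is fine and matches the paper: Drappeau's theorem handles the small moduli, and the divisor reflection (writing $q\mid n-a$ via its cofactor $d$) turns the large-modulus sum back into friables in progressions of small modulus, again amenable to Theorem~\ref{thsd}. This is exactly Lemma~\ref{lemsigma}, and your error bookkeeping there is essentially correct.

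The gap is precisely where you locate it: the third paragraph. After the reflection you are left with differences of sums of the type $\sum_r \Psi_{r}(N_r,y)/\phi(r)$, each individually of size $\Psi(x,y)\log M$, and you must extract a surviving term of size $\Psi(x,y)/M$ with error $\Psi(x,y)/(ML_\varepsilon(M))$. Your proposed tool, Hildebrand's estimate~\eqref{estHild}, has relative error $\log(u+1)/\log y$: applied term by term it cannot see below $\Psi(x,y)\log(u+1)/\log y$, which is vastly larger than the target. The heuristic that the leading terms cancel ``because the two reflection ranges have equal logarithmic length'' is not a proof, and the attribution of the $\tfrac12$ to divisor-switching symmetry is not how it actually arises. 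In the paper the whole computation is done analytically: both $\widetilde\sigma_1$ and $\sigma_2$ are written as double Perron integrals (Lemma~\ref{estsigma2}) with explicit Dirichlet kernels $G_a$ and $F_a$ computed in Lemmas~\ref{calculFa}--\ref{calculGa}; a first contour shift shows that the residue at $s_2=0$ is \emph{identical} for the two (because $K_a(s_1,0)=1$), so the large terms cancel exactly (Lemma~\ref{lem2sigma}); a second contour shift past $s_2=-1$ produces the main term, and the $-\tfrac12$ is $\zeta(0)$, while the shift to $u_a$ comes from the factor $|a|^{-s_1}$ in the residue (Lemma~\ref{lem3sigma}). The final identification with $\rho(u_a)$ requires a separate saddle-point estimate (Lemma~\ref{estI}), not~\eqref{estHild}. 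Without a replacement for this analytic machinery your argument does not close; ``one checks'' in paragraph three is the entire content of the theorem.
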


\begin{rems} 
1-- La condition $ M\leq y $ permet de simplifier les calculs. En effet,   tous les entiers $\leq M$ sont $y$-friables.  
De plus, la pr\'esence de $O(\tau_3(a)\Psi(x,y) y^{-\delta})$ dans le terme d'erreur de~\eqref{estsigmaM1} limite le domaine dans lequel le premier terme du membre de droite de~\eqref{estsigmaM1}  est un terme principal \`a des param\`etres $M$ tels que $M\leq y^{\delta}$.
Pour avoir un \'equivalent, nous devons aussi avoir
$M\leq H(u)^{ \delta} (\log x)^{ A}$. Cette contrainte est impos\'ee par le th\'eor\`eme de Drappeau.

2-- Lorsque  $(x,y)\in (H_\varepsilon)$, l'estimation \eqref{estHild} fournit 
$$\frac{x}{ |a|}\rho(u_a) =\Psi\Big(\frac{x}{|a|},y\Big)
\Big\{ 1+O\Big(\frac{\log (u+1)}{\log y}\Big)\Big\}.$$

3-- Notons que la restriction $(a,q)=1$ dans \eqref{defsigma} correspond aux conditions d'application du th\'eor\`eme de Drappeau. 
Il est \`a noter que le terme principal attendu pour $\Psi^*(x,y;a,q)$ est 
celui de $\Psi^*(x/d,y;a/d,q/d)$ avec $d=(a,q)$, soit $\Psi_{q/d}(x/d,y)/\phi(q/d)$. Lorsque $(a,q)=d$, nous posons donc
$$E^*(x,y;a,q):=\Psi^*(x,y;a,q)-\frac{\Psi_{q/d}(x/d,y)}{ \phi(q/d)}.
$$
La formule
$$ \sum_{ \substack{q\leq x/M }}E^*(x,y;a,q)
=\sum_{ \substack{d\mid a\\a=da' }} \sigma(x/d,y,M; a') $$ coupl\'ee avec le Th\'eor\`eme \ref{th} permet  d'obtenir l'estimation suivante de la somme de  $E^*(x,y;a,q)$ sans la restriction $(a,q)=1$  
\begin{equation} \begin{split} \sum_{ \substack{q\leq x/M }}&E^*(x,y;a,q)
= - \frac{ x}{2M }\rho(u_a)  +O_{\varepsilon}\bigg( \frac{\tau_4(a)\tau_3(a)\Psi(x,y)}{ M L_\varepsilon(M) }  \bigg).\end{split}\end{equation}

4--
Le cas $  \sigma (x,y,1; a)$ correspond au probl\`eme des diviseurs de Titchmarsh \'etudi\'e par Drappeau \cite{D13}. En effet, par exemple lorsque $a\neq 0$, nous avons
$$ \sum_{ \substack{n\in S(x,y)\\n > a}}\tau(n-a)= \sum_{ \substack{q\leq x  }}E^*(x,y;a,q)+\sum_{q\leq x}\frac{\Psi_{q/(q,a)}(x/(q,a),y)}{\phi(q/(q,a))}+O(|a|).$$ 
 Le terme d'erreur provient du fait que si $a<0$, il peut y avoir des diviseurs dans l'intervalle $[x,x+|a|]$ comptabilis\'es dans le membre de gauche mais pas dans le membre de droite.
Dans le cas $M=1$, notre d\'emonstration  nous ram\`ene \`a une expression qui a \'et\'e \'evalu\'ee lorsque $a=1$ dans \cite{D13}. La g\'en\'eralisation \`a $a$ quelconque rel\`eve des m\^emes m\'ethodes. Nous n'avons pas besoin des estimations d\'evelopp\'ees \`a la section 7 de sorte que notre r\'esultat pourrait \^etre dans ce cas \^etre aussi valables dans un domaine de la forme $(\log x)^C\leq y\leq x^{1/C}$ comme dans  \cite{D13}. Le cas $M=1$ ne n\'ecessite pas de d\'evelopper des estimations \`a l'aide de double int\'egrale comme dans les Lemmes \ref{estsigma2} et \ref{lem2sigma}.

5-- Si l'on rempla\c cait dans la d\'efinition de $E^*(x,y;a,q)$ le terme $\Psi^*(x,y;a,q)$ par la quantit\'e $\Psi(x,y;a,q)$, on aurait \`a rajouter le terme $\sum_{q\leq x/m}{\bf 1}_{S(x,y)}(a)$, qui est beaucoup plus grand que le terme principal obtenu dans le Th\'eor\`eme \ref{th}. Cela justifie a posteriori l'usage de $\Psi^*(x,y;a,q)$. 
\end{rems}

Nous pouvons d\'eduire ais\'ement le r\'esultat asymptotique suivant du Th\'eor\`eme \ref{th}.
\begin{cor}  
Soient $\varepsilon\in\, ]0,\tfrac12[$ et $A>0$. Il existe des constantes $C,\delta>0$ telles  que,    lorsque $(x,y)$ satisfait $(H_\varepsilon)$ et $  y\leq x^{1/C}$, $M$ satisfaisant \eqref{inegM},  $1\leq  |a|\leq M^{1/2-\varepsilon}$,     et $ \tau_3(a)\leq L_{\varepsilon/3}(M),$ 
nous avons lorsque $M$ tend vers l'infini
\begin{equation} \begin{split}\sigma (x,y,M; a) \sim& -\frac{\phi(|a|)x}{2M|a|}\rho(u_a)   \sim -\frac{\phi(|a|) }{2M }\Psi\Big(\frac{x}{|a|},y\Big).
   \end{split}\end{equation} 
\end{cor}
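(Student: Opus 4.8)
The plan is to deduce the two asymptotic equivalences directly from Theorem~\ref{th}, so almost all the work is already done and what remains is to check that, under the additional hypotheses, the error term in \eqref{estsigmaM1} is genuinely of smaller order than the main term. First I would invoke \eqref{estsigmaM1}: under the stated conditions $(x,y)\in(H_\varepsilon)$, $y\le x^{1/C}$, $M$ satisfying \eqref{inegM} and $1\le|a|\le M^{1/2-\varepsilon}$, we have
\begin{equation*}
\sigma(x,y,M;a) = -\frac{x\phi(|a|)}{2M|a|}\rho(u_a) + O_{\varepsilon,A}\!\left(\frac{\tau_3(a)^2\Psi(x,y)}{ML_\varepsilon(M)}\right).
\end{equation*}
So it suffices to show the ratio of the error term to the main term tends to $0$ as $M\to\infty$. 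The main term has absolute value $\dfrac{x\phi(|a|)}{2M|a|}\rho(u_a)$; since $u_a = u-(\log|a|)/(\log y)$ and $|a|\le M^{1/2-\varepsilon}\le y^{\delta}$ (using $M\le y^\delta$), we have $(\log|a|)/(\log y)\le\delta$, so $u_a$ stays within a bounded distance of $u$, and I would use Remark~2 together with \eqref{estHild} to write $\dfrac{x}{|a|}\rho(u_a) = \Psi(x/|a|,y)(1+O(\log(u+1)/\log y))$ and hence to compare $\dfrac{x}{|a|}\rho(u_a)$ with $\Psi(x,y)$. Concretely, since all integers $\le M$ (a fortiori $\le|a|$) are $y$-friable, a standard friable estimate gives $\Psi(x,y)/\Psi(x/|a|,y)\le|a|^{1+o(1)}$, and conversely $\Psi(x/|a|,y)\le\Psi(x,y)$; combined with $\phi(|a|)\asymp|a|/\log\log(3|a|)$, this shows the main term is $\gg \dfrac{\Psi(x,y)}{M|a|^{o(1)}\log\log(3|a|)}$, which since $|a|\le M^{1/2}$ is $\gg \dfrac{\Psi(x,y)}{M M^{o(1)}}= \dfrac{\Psi(x,y)}{M}M^{-o(1)}$.

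The key comparison is then between $\tau_3(a)^2/L_\varepsilon(M)$ (from the error) and $M^{-o(1)}$ (from the lower bound on the main term). Here is where the hypothesis $\tau_3(a)\le L_{\varepsilon/3}(M)$ enters: it gives $\tau_3(a)^2\le L_{\varepsilon/3}(M)^2 = \exp\{2(\log M)^{3/5-\varepsilon/3}\}$, so
\begin{equation*}
\frac{\tau_3(a)^2}{L_\varepsilon(M)} \le \exp\!\big\{2(\log M)^{3/5-\varepsilon/3} - (\log M)^{3/5-\varepsilon}\big\},
\end{equation*}
and since $3/5-\varepsilon/3 > 3/5-\varepsilon$ is false — wait, $\varepsilon/3<\varepsilon$ so $3/5-\varepsilon/3 > 3/5-\varepsilon$ is indeed true, hence the subtracted exponent dominates and this ratio is $\le\exp\{-\tfrac12(\log M)^{3/5-\varepsilon}\}$ for $M$ large. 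The factor $M^{o(1)}=\exp\{o(\log M)\}$ coming from $|a|^{o(1)}\log\log(3|a|)\le M^{o(1)}$ and from the Hildebrand error $1+O(\log(u+1)/\log y)$ is then absorbed, since $\exp\{o(\log M)\}\cdot\exp\{-\tfrac12(\log M)^{3/5-\varepsilon}\}\to 0$. Therefore the error term in \eqref{estsigmaM1} is $o$ of the main term, which proves $\sigma(x,y,M;a)\sim -\dfrac{\phi(|a|)x}{2M|a|}\rho(u_a)$.

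For the second equivalence, I would apply Remark~2 once more: $\dfrac{x}{|a|}\rho(u_a) = \Psi\!\big(\tfrac{x}{|a|},y\big)\{1+O(\log(u+1)/\log y)\}$, and since $(x,y)\in(H_\varepsilon)$ forces $\log y\ge(\log\log x)^{5/3+\varepsilon}$ while $\log(u+1)\le\log\log x$, the relative error $\log(u+1)/\log y\to 0$; multiplying through by the $M$-dependent constant $-\phi(|a|)/(2M)$ (which does not affect the asymptotic relation) yields $-\dfrac{\phi(|a|)x}{2M|a|}\rho(u_a)\sim -\dfrac{\phi(|a|)}{2M}\Psi\!\big(\tfrac{x}{|a|},y\big)$. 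Chaining the two equivalences gives the displayed double equivalence of the Corollary.

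\textbf{Main obstacle.} The only real point requiring care is the lower bound on the main term, i.e.\ controlling $\Psi(x,y)/\Psi(x/|a|,y)$ from above by $|a|^{1+o(1)}$ uniformly in the domain $(H_\varepsilon)$; this needs a friable-integers estimate valid throughout that domain (for which Hildebrand's formula \eqref{estHild} suffices since $|a|\le M^{1/2}\le y^{\delta/2}$ keeps $u_a$ close to $u$ and $\rho(u_a)/\rho(u)$ bounded), rather than any new input. Everything else is a direct comparison of the two exponential scales $(\log M)^{3/5-\varepsilon}$ versus $(\log M)^{3/5-\varepsilon/3}$ and $o(\log M)$, which the hypothesis $\tau_3(a)\le L_{\varepsilon/3}(M)$ is precisely designed to make work.
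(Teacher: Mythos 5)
Your overall strategy is the intended one (the paper offers no separate proof: it simply says the corollary follows easily from Th\'eor\`eme \ref{th}), and the lower bound on the main term via $\rho(u_a)\ge\rho(u)$, $x\rho(u)\asymp\Psi(x,y)$ and $\phi(|a|)/|a|\gg 1/\log\log(3|a|)$, as well as the second equivalence via \eqref{estHild}, are fine. But the central comparison is wrong, and you in fact resolved your own hesitation in the wrong direction. Since $L_\varepsilon(M)=\exp\{(\log M)^{3/5-\varepsilon}\}$, a \emph{smaller} $\varepsilon$ gives a \emph{larger} $L$; from $3/5-\varepsilon/3>3/5-\varepsilon$ it is the positive term $2(\log M)^{3/5-\varepsilon/3}$ that dominates the subtracted term $(\log M)^{3/5-\varepsilon}$, so
\begin{equation*}
\frac{\tau_3(a)^2}{L_\varepsilon(M)}\le \exp\bigl\{2(\log M)^{3/5-\varepsilon/3}-(\log M)^{3/5-\varepsilon}\bigr\}\longrightarrow\infty,
\end{equation*}
not $\le\exp\{-\tfrac12(\log M)^{3/5-\varepsilon}\}$ as you claim. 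Thus, applying \eqref{estsigmaM1} at the level $\varepsilon$ itself, the hypothesis $\tau_3(a)\le L_{\varepsilon/3}(M)$ does not make your error term $o$ of the main term, and your proof does not close.

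The gap is easy to repair, and this is presumably the intended ``easy'' deduction: apply Th\'eor\`eme \ref{th} with a parameter $\varepsilon'$ strictly smaller than $\varepsilon/3$, say $\varepsilon'=\varepsilon/4$. The hypotheses of the corollary at level $\varepsilon$ imply those of the theorem at level $\varepsilon'$: the domain satisfies $(H_\varepsilon)\subset(H_{\varepsilon'})$, one has $|a|\le M^{1/2-\varepsilon}\le M^{1/2-\varepsilon'}$, and \eqref{inegM} does not involve $\varepsilon$ (the constants $C,\delta$ there are absolute). This yields the error term $O\bigl(\tau_3(a)^2\Psi(x,y)/(M L_{\varepsilon/4}(M))\bigr)$, and now
\begin{equation*}
\frac{\tau_3(a)^2}{L_{\varepsilon/4}(M)}\le \exp\bigl\{2(\log M)^{3/5-\varepsilon/3}-(\log M)^{3/5-\varepsilon/4}\bigr\}\le \exp\bigl\{-\tfrac12(\log M)^{3/5-\varepsilon/4}\bigr\}
\end{equation*}
for $M$ large, because $3/5-\varepsilon/4>3/5-\varepsilon/3$. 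This decay absorbs the $\log\log M$ loss from $\phi(|a|)/|a|$ and the Hildebrand factor $1+O(\log(u+1)/\log y)$, and the rest of your argument then goes through verbatim.
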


\section{Premi\`ere \'etape de la preuve : l'application du th\'eor\`eme de Drappeau}

Soit $C \geq 15/7$, et posons $M_2:= x^{7/15}$. Pour $1\leq M_1\leq M_2$, la contribution dans la somme $\sigma(x,y,M_1; a)$ des $q\leq x^{8/15}=x/M_2$ peut \^etre major\'ee gr\^ace au Th\'eor\`eme \ref{thsd}.
Ainsi nous avons
\begin{equation}\label{qpetit}
\sum_{ \substack{1\leq q\leq x^{8/15}\\ (q,a)=1}}E^*(x,y;a,q)
\ll\Psi(x,y)\big\{ H(u)^{-\delta}(\log x)^{-A} +y^{-\delta}\big\} .
\end{equation}

Nous avons aussi
$$\sum_{ \substack{x^{8/15}<q\leq x/M_1\\ (q,a)=1}}E^*(x,y;a,q)=
\sum_{ \substack{x/M_2<q\leq x/M_1\\ (q,a)=1}}\Psi^*(x,y;a,q)-\sum_{ \substack{x/M_2<q\leq x/M_1\\ (q,a)=1}} \frac{\Psi_q(x,y)}{ \phi(q)}.$$

Posant 
\begin{equation}\label{defsigma12}\sigma_1(x,y,M ; a) :=\sum_{ \substack{x/M<q\leq x\\ (q,a)=1}}\Psi^*(x,y;a,q),\qquad
\sigma_2(x,y,M; a) :=\sum_{ \substack{x/M<q\leq x\\ (q,a)=1}}\frac{\Psi_q(x,y)}{ \phi(q)},
\end{equation}
nous avons
\begin{equation}\label{diffsigma}
\begin{split}\sum_{ \substack{x/M_2<q\leq x/M_1\\ (q,a)=1}}E^*(x,y;a,q)&= \sigma_1(x,y,M_2;a)-\sigma_1(x,y,M_1; a)\cr&\quad -\sigma_2(x,y,M_2;a)+\sigma_2(x,y,M_1; a).
\end{split} \end{equation} 

Nous montrons une premi\`ere estimation de $\sigma_1(x,y,M; a)$ par $\widetilde\sigma_1(x,y,M; a)$ o\`u
\begin{equation}\label{deftildes}\widetilde\sigma_1(x,y,M; a):=\sum_{\substack{ka'= a }}
\sum_{\substack{\ell_1\mid k\\ (\ell_1, a')=1  }} \mu(\ell_1) \sum_{\substack{1\leq r<M/k\\ (r,a')=1}} \frac{\Psi_{a'r\ell_1}( x/k ,y) -\Psi_{a'r\ell_1}( rx/M ,y )}{\phi(r\ell_1)},
\end{equation}
o\`u $\mu$ d\'esigne la fonction de M\"obius.

\begin{lemma} \label{lemsigma}
Soient $\varepsilon>0$ et $A>0$. Il existe des constantes $C,\delta>0$ telles que,   lorsque $(\log x)^C\leq y\leq x^{1/C}$, $1\leq |a|\leq  \min\{y,M\}$, $M\leq x^{7/15}$, nous avons 
\begin{align*}\sigma_1(x,y,M; a)&=\widetilde\sigma_1(x,y,M; a)
  +O\Big(\tau_3(a)\Psi(x,y)\big\{ H(u)^{-\delta}(\log x)^{-A} +y^{-\delta}\big\}  \Big).\end{align*}
\end{lemma}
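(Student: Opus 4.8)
The plan is to start from the defining sum $\sigma_1(x,y,M;a)=\sum_{x/M<q\le x,\ (q,a)=1}\Psi^*(x,y;a,q)$ and to unfold the congruence condition. Writing $n\equiv a\bmod q$ with $n\in S(x,y)$, $n\neq a$, and setting $q=mn'$... more precisely, the natural move is to write $n=a+qt$ with $t\ge 1$ (the case $a<0$ and small $n$ contributing to the $*$-correction has already been absorbed in the definition of $\Psi^*$), so that $\Psi^*(x,y;a,q)=\#\{t\ge 1:\ a+qt\le x,\ P(a+qt)\le y\}$. Summing over $q$ in the dyadic-type range $x/M<q\le x$ and exchanging the order of summation with $t$, the variable $t$ runs over a short range $1\le t<M$ (since $q>x/M$ forces $qt\le x$, i.e. $t<M$ roughly, after handling the $a$-shift), and for each fixed $t$ the inner sum counts friable integers $a+qt\le x$ with $q$ in an interval and $(q,a)=1$. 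The condition $(q,a)=1$ is removed by Möbius over $\ell_1\mid (q, \text{part of }a)$; writing $a=ka'$ to separate the common factors forced by $q\equiv ?\bmod{}$... this is exactly the combinatorial bookkeeping that produces the shape of $\widetilde\sigma_1$ in \eqref{deftildes}, with the $\Psi_{a'r\ell_1}(\cdot,y)$ arising because once we fix the "arithmetic progression mod small modulus" the remaining count of friable $q$ in an interval is itself a friable-counting function with a coprimality restriction, evaluated by inclusion–exclusion.

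Concretely I would proceed as follows. First, reparametrise: for each divisor $k$ of $a$ with $a=ka'$, isolate the part of the congruence that is automatically satisfied, so that the genuine new variable is $r$ with $1\le r<M/k$ and $(r,a')=1$; the inner count of $q$'s becomes, after a second Möbius inversion over $\ell_1\mid k$ coprime to $a'$, a difference of two friable-counting functions $\Psi_{a'r\ell_1}(x/k,y)-\Psi_{a'r\ell_1}(rx/M,y)$ divided by $\phi(r\ell_1)$ — the $\phi(r\ell_1)$ appearing because summing $1$ over $q$ in an interval lying in a fixed residue class mod $r\ell_1$ (with the coprimality built in) has main term length$/\phi(r\ell_1)$, and this is precisely where the error term $E^*$ of Drappeau's Theorem \ref{thsd} is what controls the discrepancy. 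Second, replace each exact inner count by its expected main term $\Psi_{a'r\ell_1}(\cdots,y)/\phi(r\ell_1)$ at the cost of an error $\sum |E^*(\cdot,y;\cdot,q)|$ over $q$ in the relevant ranges; here one invokes \eqref{sd1}–\eqref{sd2} (in the form with the $\tau(q)^2$ weight and the $\max_{z\le x}$, as noted in the remark after Theorem \ref{thsd}) to bound this by $\tau_3(a)\Psi(x,y)\{H(u)^{-\delta}(\log x)^{-A}+y^{-\delta}\}$. The factor $\tau_3(a)$ comes from summing over the two divisor variables $k\mid a$ and $\ell_1\mid k$ (each contributing essentially a divisor sum on $a$), together with the harmless $\tau(q)^2$ weight from Drappeau that covers the length-of-$r$-sum; the restriction $|a|\le\min\{y,M\}$ and $M\le x^{7/15}$ guarantees all moduli $a'r\ell_1\le q\le x/M_1$ stay within the admissible range $x^{3/5-\varepsilon}$ (or $x^{6/11-\varepsilon}$) of Theorem \ref{thsd}, and that every integer up to $M$ is automatically $y$-friable.

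The main obstacle I expect is precisely the \emph{bookkeeping of the coprimality and divisor conditions}: correctly tracking, for a given $q$ with $(q,a)=d$, how the shift $n=a+qt$ forces $d\mid n$ and hence reduces everything to the "primitive" problem $\Psi^*(x/d,y;a/d,q/d)$, and then organising the double Möbius inversion so that the error terms that arise really are of the form $\sum_q\tau(q)^2\max_{z}|E^*(z,y;a,q)|$ that Drappeau's theorem bounds — in particular making sure the interval endpoints $x/k$ and $rx/M$ are both $\le x$ so the $\max_{z\le x}$ applies, and that the modulus in each $E^*$-term is $q=a'r\ell_1$ with $q\le x^{8/15}<x^{6/11-\varepsilon}$ (using $M_1\ge1$ and a suitable $C$). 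A secondary technical point is that $\Psi_{a'r\ell_1}(x/k,y)-\Psi_{a'r\ell_1}(rx/M,y)$ must be shown to be nonnegative and of the expected size so that after division by $\phi(r\ell_1)$ and summation over $r<M/k$ one recovers exactly $\widetilde\sigma_1$ as written in \eqref{deftildes}; this is a routine but careful rearrangement once the Möbius structure is in place. All the actual size estimates of $\widetilde\sigma_1$ — extracting the main term $-\tfrac{x\phi(|a|)}{2M|a|}\rho(u_a)$ — are deferred to later sections via Hildebrand's estimate \eqref{estHild}; here the only task is the identity up to the stated admissible error.
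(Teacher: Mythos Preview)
Your overall route --- swap sums, identify a friable-in-progressions count, invoke Drappeau --- matches the paper's, but the central step is misdescribed and one M\"obius inversion is missing. After writing $m=a+qt$ and setting $k=(m,a)$, $m=kn$, $a=ka'$, $t=kr$, the variable $q$ disappears: it equals $(n-a')/r$. For fixed $k,\ell_1,r$ the inner object is a count of \emph{friable} $n\in S(x/k,y)$ with $n\equiv a'\bmod{r\ell_1}$ and $(n,a')=1$, lying in $(rx/M+a',\,x/k]$. This is a $\Psi^*$-type quantity in the small modulus $r\ell_1\le M$, and the factor $1/\phi(r\ell_1)$ in $\widetilde\sigma_1$ is exactly Drappeau's main term $\Psi^*(z,y;\,\cdot\,,r\ell_1)\approx\Psi_{r\ell_1}(z,y)/\phi(r\ell_1)$ --- not ``length$/\phi$ from summing $1$ over $q$'s in a residue class'', which is not what is being computed here.

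The concrete gap is the constraint $(n,a')=1$. Theorem~\ref{thsd} bounds $E^*(z,y;b,q)$ for a plain residue class $b\bmod q$, not for one carrying an additional coprimality on the friable variable, so you cannot apply it directly. The paper removes this constraint by a \emph{second} M\"obius inversion, over $\ell_2\mid a'$: writing $n=\ell_2 n'$ reduces everything to differences $\Psi^*(x/k\ell_2,y;a'',r\ell_1)-\Psi^*(rx/M\ell_2,y;a'',r\ell_1)$, and now Drappeau applies (moduli $r\ell_1\le M\le x^{7/15}$). The $\ell_1$-sum is absorbed into the $\tau(q)^2$ weight of Theorem~\ref{thsd}; it is the outer sums over $k\mid a$ and $\ell_2\mid a'$, not $k$ and $\ell_1$, that produce the factor $\tau_3(a)$. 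Replacing the $\Psi^*$ by their main terms and then recombining the $\ell_2$-sum restores coprimality to $a'$ and yields the subscript $a'r\ell_1$ in $\Psi_{a'r\ell_1}$. Your plan, as written, has no mechanism to bring the inner count into a form to which Theorem~\ref{thsd} applies.
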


\begin{proof}
Nous \'ecrivons les entiers $m$ somm\'es dans $\Psi^*(x,y;a,q)$ apparaissant dans l'expression \eqref{defsigma12} de $\sigma_1(x,y,M;a)$.
Nous \'ecrivons $m=a+qt$, puis $k=(m,a)$. Nous avons $P(k)\leq y$, $k\mid a$.  Il existe des  entiers $n,$ $r$, $a'$,    tels que 
$m=nk$, $a=ka'$, $t=kr$ et $n=a'+qr$ avec $(k,q)=1$, $ (a',n)=1$. 
Nous traitons les conditions $(k,q)=1$ et $(a',n)=1$ par des inversions de M\"obius. La condition $P(k)\leq y$ est forc\'ement satisfaite puisque $k\leq |a|\leq y$. De m\^eme, nous utiliserons librement l'in\'egalit\'e $k\leq M$. La condition $x/M<q\leq x$ \'equivaut \`a $rx/M+a'<r\leq rx+a'$.
 
De simples manipulations et une premi\`ere inversion  de M\"obius fournissent
\begin{align*}\sigma_1(x,y,M; a)& =\sum_{\substack{ka'= a}}\sum_{ \substack{x/M<q\leq x\\ (q,a)=1}}\sum_{\substack{n\in S(x/k,y)\\ n\neq a'\\  (n,a')=1\\ n\equiv a'\bmod{q}}}1
\cr&  
=\sum_{\substack{ka'= a}}
\sum_{\substack{\ell_1\mid k\\ (\ell_1, a')=1}} \mu(\ell_1) \sum_{\substack{1\leq r<(x/k-a')M/x\\(r,a')=1}}\,\,\,
\sum_{\substack{n\in S(\min\{x/k, rx+a'\},y)\\ n\neq a'\\(n,a')=1\\ n \equiv a'\bmod{r\ell_1}\\ n>rx/M+a'}} 1.
\end{align*}
Une seconde inversion  de M\"obius pour traiter la condition $(n,a')=1$ fournit alors
\begin{align*}\sigma_1(x,y,M; a)&
=\sum_{\substack{ka'= a}}
\sum_{\substack{\ell_1\mid k\\ (\ell_1, a')=1\\\ell_2a''=a' }} \mu(\ell_1)\mu(\ell_2) \sum_{\substack{1\leq r<(x/k-a')M/x\\(r,a')=1}}
\sum_{\substack{n'\in S(\min\{x/k, rx+a'\}/\ell_2,y)\\ n'\neq a'' \\ n '\equiv a''\bmod{r\ell_1}\\  n'>rx/M\ell_2+a''}} 1.
\end{align*}
Nous avons utilis\'e ici que $(\ell_2,r\ell_1)\mid (a',r\ell_1)=1$.
Nous  avons $\min\{x/k, rx+a'\}=x/k$ sauf si $k=1$, $r=1$, $a<0$. Dans ce cas, le nombre de $n'$ compt\'es en trop satisfaisant  \`a $ rx+a'<n'\ell_2\leq x$ est au plus $O(a\sum_{k,\ell_2}\tau(k)/k\ell_2 )= O(a^4/\phi(a)^3)$.

Ainsi, posant $R_1(a):=a^4/\phi(a)^3+M\tau(a)a^2/\phi(a)^2$, nous obtenons 
\begin{align*}&\sigma_1(x,y,M; a) 
=\sum_{\substack{ka'= a}}
\sum_{\substack{\ell_1\mid k\\ (\ell_1, a')=1\\\ell_2a''= a' }} \mu(\ell_1)\mu(\ell_2)
\sum_{\substack{1\leq r<(x/k-a')M/x\\(r,a')=1}}
\sum_{\substack{n'\in S( x/k\ell_2 ,y) \\ n'\neq a'' \\n '\equiv a''\bmod{r\ell_1}\\ n'>rx/M\ell_2+a''}} 1+O\big(R_1(a)\big)\\
& 
=\sum_{\substack{ka'= a}}\sum_{\substack{\ell_1\mid k\\ (\ell_1, a')=1\\\ell_2a''= a' }}  \mu(\ell_1)\mu(\ell_2)
\sum_{\substack{1\leq r<M/k-a'M/x\\ (r,a')=1}} 
\cr&\qquad\qquad\big(\Psi^*( x/k\ell_2 ,y;a'',r\ell_1) -\Psi^*( rx/M\ell_2+a'' ,y;a'',r\ell_1) \big)+O\big(R_1(a)\big)
\end{align*}

Dans cette estimation, nous pouvons remplacer le cardinal $\Psi^*( rx/M\ell_2+a' ,y;a',r\ell_1)$ par $\Psi^*( rx/M\ell_2 ,y;a',r\ell_1)$
au prix d'un terme d'erreur $O(1+a/kr\ell_1)$ qui, lorsqu'il est somm\'e, fournit une contribution 
$ O( a^2(\log M)/\phi(a)+R_1(a) ).$  
Il vient
\begin{align*} \sigma_1(x,y,M; a) &
=\sum_{\substack{ka'= a}}\sum_{\substack{\ell_1\mid k\\ (\ell_1, a')=1\\\ell_2a''= a' }} \!\!\!\! \mu(\ell_1)\mu(\ell_2)\!\!\!\!\!\!\!
\!\!\!\!\sum_{\substack{1\leq r<M/k-a'M/x\\ (r,a')=1}}\!\!\!\!\!\!\!\!\!\!\!\big(\Psi^*( x/k\ell_2 ,y;a'',r\ell_1) -\Psi^*( rx/M\ell_2  ,y;a'',r\ell_1) \big)
\cr&\quad +O\big(a^2(\log M)/\phi(a)+R_1(a)\big).
\end{align*}

Gr\^ace au Th\'eor\`eme \ref{thsd} de Drappeau, nous pouvons donc approcher $\sigma_1$ par $\sigma_1^*$ avec  
\begin{align*}\sigma_1^*(x,y,M; a)&: 
=\sum_{\substack{ka'= a}}\sum_{\substack{\ell_1\mid k\\ (\ell_1, a')=1\\\ell_2a''= a' }}\mu(\ell_1)\mu(\ell_2)\!\!\!\!\!\!
\sum_{\substack{1\leq r<M/k-a'M/x\\ (r,a')=1}}\!\!\!\!\!\! \frac{\Psi_{r\ell_1}( x/k\ell_2 ,y) -\Psi_{r\ell_1}( rx/M\ell_2 ,y )}{\phi(r\ell_1)}
 \cr
& 
=\sum_{\substack{ka'= a }}\sum_{\substack{\ell_1\mid k\\ (\ell_1, a')=1\\\ell_2a''= a' }} \mu(\ell_1)\mu(\ell_2)\!\!\!\!\!\!
\sum_{\substack{\substack{  n'\in S(x/k\ell_2, y)\\(n', \ell_1)=1}}}\quad   \sum_{\substack{1\leq r<\min\{M/k-a'M/x, n'\ell_2M/x\}\\(r,a'n')=1}} \frac{1}{\phi(r\ell_1)}.
\end{align*} 
Lorsque $a\leq M$, l'erreur commise est 
\begin{align*}
&\ll \sum_{\substack{ka'= a }}\sum_{\substack{ \ell_2\mid  a'  }}
\sum_{\substack{q\leq M\\ (q,a')}}\tau(q)^2\max_{z\leq x}| E^*(z,y;a',q)|
\ll\tau_3(a)\Psi(x,y)\big\{ H(u)^{-\delta}(\log x)^{-A} +y^{-\delta}\big\}.
\end{align*}  

Nous pouvons enlever la sommation en $\ell_2$ en rajoutant la condition $(n,a')=1$.
Il vient
\begin{align*}\sigma_1^*(x,y,M; a)& 
=\sum_{\substack{ka'= a }}\sum_{\substack{\ell_1\mid k\\ (\ell_1, a')=1  }} \mu(\ell_1) 
\sum_{\substack{\substack{  n \in S(x/k , y)\\(n , a'\ell_1)=1}}}\quad   \sum_{\substack{1\leq r<\min\{M/k-a'M/x, nM/x\}\\(r,a'n )=1}} \frac{1}{\phi(r\ell_1)}.
\end{align*}

Dans la somme int\'erieure, on a $\min\{M/k-a'M/x, nM/x\}=nM/x$ sauf pour les entiers $n$ satisfaisant $x/k-a'<n\leq x/k$. Leur contribution est major\'ee par $O(a(\log M)(\log y))$. 
Nous obtenons donc
\begin{align*} \sigma_1^* (x,y,M; a)
&= \widetilde\sigma_1(x,y,M; a)
 +O\big(a(\log M)(\log y)\big),
\end{align*}
puisque
\begin{equation}\label{tildesigma1exp2}\widetilde\sigma_1(x,y,M; a)=\sum_{\substack{ka'= a }}\sum_{\substack{\ell_1\mid k\\ (\ell_1, a')=1  }} \mu(\ell_1) 
\sum_{\substack{  n\in S(x/k, y)\\ n>x/M\\(n ,a' \ell_1)=1}}\quad   \sum_{\substack{1\leq r<  nM/x \\(r, a'n)=1}} \frac{1}{\phi(r\ell_1)}.
\end{equation}
Il reste \`a v\'erifier que la somme $a(\log M)(\log y)+R_1(a)$ est bien englob\'ee dans le terme d'erreur du Lemme \ref{lemsigma} compte-tenu de la taille de $a$ et de $M$.
\end{proof}

\section{Rappel de certains r\'esultats}
\subsection{Rappel concernant les nombres friables}

 Comme il est d'usage, nous uti\-lisons $\alpha$ le point selle de la m\'ethode du col appliqu\'ee \`a l'estimation de $\Psi(x,y)$ c'est-\`a-dire l'unique solution positive $\alpha=\alpha(x,y)$ d\'efinie par 
$$\sum_{p\leq y} \frac{\log p}{p^\alpha-1}=\log x.$$
Comme on a classiquement
$$\sum_{p\le x}\frac{\log p}{ p-1}=\log x-\gamma+o(1)\qquad (x\to\infty)$$ o\`u  $\gamma$  d\'esigne  la
constante d'Euler, on  a aussi pour une constante convenable $x_0$, \begin{equation}\alpha<1\qquad 
(x>x_0,\,x\ge y\ge
2).\label{alpha<1}\end{equation}

Nous notons la partie $y$-friable de la fonction z\^eta de Riemann
$$\zeta(s,y):=\sum_{P(n)\leq y} \frac{1}{n^s}\qquad (\Re e (s)>0).$$
Nous posons
\begin{equation}\label{defsigmak}\begin{split}\phi_0(s,y)&:=\log \zeta (s,y),\quad 
\phi_k(s,y):=\phi_0^{(k)}(s,y),\cr
\quad \sigma_k&=\sigma_k(\alpha,y):=(-1)^k\phi_k(\alpha,y)\quad (k\in\NN).\end{split}\end{equation} 
Nous ferons usage des relations valables lorsque $y\geq (\log x)^C$, o\`u $C>1$,
\begin{equation}\sigma_k\asymp (\log x)( \log y)^{k-1}\qquad
(k\geq 1),\label{estsigma}\end{equation}
   \'etablies aux lemmes 2 et 4 de \cite{HT86}.
Nous notons $\xi=\xi(u)$ l'unique racine r\'eelle non nulle de l'\'equation 
$$\qquad\qquad\e^\xi=1+u\xi \qquad\qquad (u>0,\quad u\neq 1).$$ Nous introduisons aussi la notation 
\begin{equation}\label{defLY}  Y_{\varepsilon}=\exp\{ (\log y)^{3/2-\varepsilon}\}.\end{equation}

Dans la suite de notre article, nous l'utiliserons l'approximation du  point-selle $\alpha$ suivante 
\begin{equation}
\alpha(x,y)=  1-\frac{\xi (u)}{ \log 
y}+O\Big(\frac{1}{
L_\varepsilon(y)}+\frac{1}{ u(\log y)^2}\Big)
\label{estalpha}\end{equation}
valable dans tout domaine o\`u $ (\log 
x)^{C}\leq y\leq x  $ avec $C>1$ de sorte que lorsque $x$ est suffisamment grand $1-\alpha\leq 2/C$.

 Le c\'el\`ebre th\'eor\`eme d'Hildebrand et Tenenbaum~\cite{HT86},  d\'ecrit une approximation uniforme de $\Psi(x,y)$ en fonction de $\alpha=\alpha(x,y)$ lorsque $x\geq y\geq 2$.

\begin{lemma}[{\cite{HT86}}]\label{estcol}  Lorsque $y\geq \log x$, on a
$$
\Psi(x,y)=\frac{x^{\alpha }\zeta(\alpha ,y)}{ \alpha \sqrt{ 2\pi \sigma_2  }}
\Big\{
1+O\Big(\frac{1}{ u}  \Big)\Big\}.
 $$
\end{lemma}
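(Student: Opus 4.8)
Le plan est d'appliquer la m\'ethode du col ; on rappelle les grandes lignes de la d\'emonstration de \cite{HT86}. On part de la repr\'esentation de Mellin--Perron
$$\Psi(x,y)=\frac{1}{2\pi\i}\int_{\alpha-\i\infty}^{\alpha+\i\infty}\zeta(s,y)\,x^s\,\frac{\d s}{s},$$
la s\'erie d\'efinissant $\zeta(s,y)$ convergeant pour $\Re e(s)>0$, et l'on int\`egre sur la droite verticale passant par le col $\alpha=\alpha(x,y)$. Ce choix rend la phase stationnaire : posant $F(s):=s\log x+\phi_0(s,y)$, on a $F'(s)=\log x+\phi_1(s,y)$, de sorte que la relation $F'(\alpha)=\log x-\sigma_1=0$ n'est autre que l'\'equation d\'efinissant $\alpha$, puisque $\sigma_1=-\phi_1(\alpha,y)=\sum_{p\leq y}(\log p)/(p^\alpha-1)$.

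La contribution principale provient d'un voisinage $|\tau|\leq T_0$ de $\tau=0$, o\`u l'on \'ecrit $s=\alpha+\i\tau$ (la largeur du pic central \'etant d'ordre $\sigma_2^{-1/2}$). Le d\'eveloppement de Taylor fournit
$$F(\alpha+\i\tau)=F(\alpha)-\tfrac12\sigma_2\tau^2-\tfrac{\i}{6}\phi_3(\alpha,y)\tau^3+\tfrac1{24}\phi_4(\alpha,y)\tau^4+\cdots,$$
le terme lin\'eaire \'etant nul. Le terme quadratique donne une gaussienne dont l'int\'egration produit le facteur $\sqrt{2\pi/\sigma_2}$, et donc le terme principal $x^\alpha\zeta(\alpha,y)/(\alpha\sqrt{2\pi\sigma_2})$, le facteur $1/s$ (assimil\'e \`a $1/\alpha$ \`a l'ordre dominant) ne contribuant qu'une erreur relative d'ordre inf\'erieur. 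Le terme cubique \'etant imaginaire pur et impair en $\tau$, sa contribution lin\'eaire \`a l'int\'egrale s'annule ; les premi\`eres corrections effectives viennent du terme quartique ($\asymp\sigma_4\tau^4$) et du carr\'e du terme cubique ($\asymp\sigma_3^2\tau^6$), de contributions relatives respectives $\asymp\sigma_4/\sigma_2^2$ et $\asymp\sigma_3^2/\sigma_2^3$, soit $\asymp1/u$ d'apr\`es \eqref{estsigma}. C'est l'origine du terme d'erreur $O(1/u)$.

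Le point le plus d\'elicat est le traitement de la queue $|\tau|>T_0$, qu'il faut montrer n\'egligeable devant le terme principal multipli\'e par $1/u$. Cela repose sur une minoration du d\'eficit
$$\Re e\big(\phi_0(\alpha,y)-\phi_0(\alpha+\i\tau,y)\big)=\sum_{p\leq y}\sum_{k\geq1}\frac{1-\cos(k\tau\log p)}{k\,p^{k\alpha}}\,\geq\,0,$$
qui permet de majorer $|\zeta(\alpha+\i\tau,y)|$ par $\zeta(\alpha,y)$ fois un facteur de d\'ecroissance adapt\'e \`a la taille de $\tau$ : pour $|\tau|$ petit au-del\`a de $T_0$ (disons $|\tau|\lesssim1/\log y$) le d\'eveloppement local reste exploitable via \eqref{estalpha} et \eqref{estsigma} ; pour $|\tau|$ mod\'er\'e on invoque le th\'eor\`eme des nombres premiers avec terme d'erreur afin de contr\^oler le nombre de premiers $p\leq y$ pour lesquels $k\tau\log p$ est proche d'un entier ; pour $|\tau|$ grand on se contente de majorations triviales jointes \`a la finitude du produit eul\'erien d\'efinissant $\zeta(s,y)$. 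Il ne reste alors qu'\`a contr\^oler l'erreur de troncature dans la formule de Perron effective, puis \`a rassembler le terme principal, les corrections en $O(1/u)$ et la queue n\'egligeable, l'hypoth\`ese $y\geq\log x$ garantissant la validit\'e des estimations auxiliaires sur $\alpha$ et sur les $\sigma_k$ dans tout le domaine consid\'er\'e. On renvoie \`a \cite{HT86} pour les d\'etails.
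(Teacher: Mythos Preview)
The paper does not give its own proof of this lemma: it is simply quoted from \cite{HT86}, so there is nothing to compare against beyond the original reference. Your sketch faithfully summarizes the saddle-point argument of Hildebrand--Tenenbaum --- Perron's formula on the line $\Re e(s)=\alpha$, Gaussian approximation near the col yielding $x^{\alpha}\zeta(\alpha,y)/(\alpha\sqrt{2\pi\sigma_2})$, corrections $\sigma_4/\sigma_2^2$ and $\sigma_3^2/\sigma_2^3$ of size $1/u$ via \eqref{estsigma}, and decay of $|\zeta(\alpha+i\tau,y)|/\zeta(\alpha,y)$ for the tail --- so it is an accurate account of the method.

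One cosmetic point: the opening display with an untruncated Perron integral from $\alpha-\i\infty$ to $\alpha+\i\infty$ is not literally valid (the integrand is not absolutely integrable and $\Psi(x,y)$ has jumps), and in \cite{HT86} one works from the start with a truncated/effective version; you do acknowledge this later, so it is not a gap but would be cleaner to state the effective formula from the outset.
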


\begin{rem} Nous utiliserons \`a de nombreuses reprises,  comme cons\'equence de ce r\'esul\-tat et de \eqref{estsigma}, 
la majoration 
\begin{equation}\label{majcol}
 {x^{\alpha }\zeta(\alpha ,y)}\ll\Psi(x,y)(\log y)\sqrt{u},
\end{equation}
valable lorsque $(\log x)^C\leq y\leq x$ et pour une constante $C\geq 1$ fix\'ee.
\end{rem}
 
Nous aurons aussi besoin du r\'esultat suivant \cite[lemmes 11 et 12]{HT86}.
\begin{lemma}[{\cite{HT86}}]\label{estcol2}  Soient $\varepsilon >0$ et $C>1$. Il existe une constante $c_1>0$ telle que, lorsque  $(\log x)^C\leq y\leq x$, et $1/\log y\leq T\leq Y_\varepsilon $, on a 
\begin{align*} 
\frac{1}{2\pi i}\int_{\alpha -iT}^{\alpha +iT}\Big|\zeta(s,y)\frac{x^s}{s}\Big| |\d s|&=\Psi(x,y) 
\Big\{
1+O\Big(\frac{1}{u}+{(\log (T\log y))(\log y)\e^{-c_1u }} \Big)\Big\};  \cr
 \int_{\alpha -iT}^{\alpha +iT}\big|\zeta(s,y) {x^s} \big| |\d s|&\ll 
\Psi(x,y) \Big(1+T(\log y)\e^{-c_1u }\Big) 
 .
\end{align*}
\end{lemma}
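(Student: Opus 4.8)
The strategy is the classical Laplace (saddle-point) method applied to $|\zeta(\alpha+i\tau,y)x^{\alpha+i\tau}|$ on the segment $s=\alpha+i\tau$, $|\tau|\le T$: this integrand has a sharp peak at $\tau=0$ of Gaussian width $\asymp\sigma_2^{-1/2}$, all the mass of each integral comes from a $\sigma_2^{-1/2}$-neighbourhood of the saddle, and the remainder is absorbed into the stated error terms through pointwise decay of $|\zeta(\alpha+i\tau,y)|$. Throughout one uses \eqref{alpha<1} and \eqref{estalpha} (so $\alpha\asymp1$ and $x^\alpha\le x$) and the sizes \eqref{estsigma}; since for bounded $u$ both claimed error terms are $\gg1$ and the estimates then follow from $|\zeta(\alpha+i\tau,y)|\le\zeta(\alpha,y)$ and Lemma~\ref{estcol}, one may assume $u$ large. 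Put $\tau_0:=u^{1/5}\sigma_2^{-1/2}$, so that $\tau_0\le1/\log y\le T$ by \eqref{estsigma}.

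\emph{Central range $|\tau|\le\tau_0$.} The nearest singularity of $\log\zeta(\cdot,y)$ to $\alpha$ lies at distance $\asymp1\gg\tau_0$, so one may Taylor-expand; with $\phi_1(\alpha,y)=(\log\zeta)'(\alpha,y)=-\sum_{p\le y}(\log p)/(p^\alpha-1)=-\log x$ and \eqref{defsigmak},
$$\log\zeta(\alpha+i\tau,y)=\log\zeta(\alpha,y)-i\tau\log x-\tfrac12\sigma_2\tau^2+\tfrac{i}{6}\sigma_3\tau^3+\tfrac1{24}\sigma_4\tau^4-\cdots,$$
so, the odd-order terms being purely imaginary,
$$|\zeta(\alpha+i\tau,y)|=\zeta(\alpha,y)\,\e^{-\sigma_2\tau^2/2}\bigl(1+\tfrac1{24}\sigma_4\tau^4+\cdots\bigr)\qquad(|\tau|\le\tau_0),$$
with $\sigma_4\tau_0^4\asymp u^{-1/5}$ by \eqref{estsigma}. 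Writing $|\alpha+i\tau|^{-1}=\alpha^{-1}(1+O(\tau^2))$ and carrying out the Gaussian integrals — the fourth moment producing $\tfrac1{24}\sigma_4\cdot3\sigma_2^{-2}\asymp u^{-1}$, the second moment $O(\sigma_2^{-1})$ being smaller, the truncation tail $\ll\sigma_2^{-1/2}\e^{-u^{2/5}/2}$ — Lemma~\ref{estcol} gives
$$\frac1{2\pi}\int_{|\tau|\le\tau_0}\Bigl|\zeta(\alpha+i\tau,y)\frac{x^{\alpha+i\tau}}{\alpha+i\tau}\Bigr|\,\d\tau=\frac{x^\alpha\zeta(\alpha,y)}{\alpha\sqrt{2\pi\sigma_2}}\bigl(1+O(1/u)\bigr)=\Psi(x,y)\bigl(1+O(1/u)\bigr),$$
while dropping $|\alpha+i\tau|^{-1}$ gives $\int_{|\tau|\le\tau_0}|\zeta(\alpha+i\tau,y)x^{\alpha+i\tau}|\,\d\tau\ll x^\alpha\zeta(\alpha,y)\sigma_2^{-1/2}\ll\Psi(x,y)$ by \eqref{majcol}.

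\emph{Outer range $\tau_0<|\tau|\le T$.} Here one needs, in each of finitely many sub-ranges, a pointwise bound $|\zeta(\alpha+i\tau,y)|\ll\zeta(\alpha,y)\,\e^{-c_2u}$ with a fixed $c_2>0$. For $\tau_0<|\tau|\ll1/\log y$ the quadratic lower bound $1-\cos(\tau\log p)\gg(\tau\log p)^2$ (applicable to all $p\le y$, since $\tau\log p\le\pi$) yields $|\zeta(\alpha+i\tau,y)|\le\zeta(\alpha,y)\e^{-c\sigma_2\tau^2}$ with $\sigma_2\tau^2\ge\sigma_2\tau_0^2=u^{2/5}$, and the corresponding part of either integral is $\ll x^\alpha\zeta(\alpha,y)\sigma_2^{-1/2}\e^{-cu^{2/5}}\ll\Psi(x,y)/u$. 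For larger $|\tau|$ one uses that $1-\cos(k\tau\log p)$ is bounded away from $0$ for a positive proportion of $p\le y$, whence
$$\log\frac{\zeta(\alpha,y)}{|\zeta(\alpha+i\tau,y)|}=\sum_{p\le y}\sum_{k\ge1}\frac{1-\cos(k\tau\log p)}{k\,p^{k\alpha}}\ \gg\ \sum_{p\le y}p^{-\alpha}\ \asymp\ u.$$
The ``positive proportion'' here is an equidistribution statement for $\{\tau\log p/2\pi\}_{p\le y}$; controlling the primes whose argument lies near $2\pi\ZZ$ — i.e.\ primes in certain multiplicative intervals — is done by sieve bounds (Brun--Titchmarsh, the large sieve) when $|\tau|$ is at most polynomial in $y$, and for $|\tau|$ up to $Y_\varepsilon$ by the prime number theorem for the twisted sums $\sum_{n\le t}\Lambda(n)n^{im\tau}$ ($m$ small) equipped with the Vinogradov--Korobov error term — which is precisely non-trivial in the range $m|\tau|\le\exp\{(\log y)^{3/2-\varepsilon}\}$, whence the restriction $T\le Y_\varepsilon$. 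Feeding $|\zeta(\alpha+i\tau,y)|\ll\zeta(\alpha,y)\e^{-c_2u}$ into the two integrals over $1/\log y<|\tau|\le T$, using $x^\alpha\zeta(\alpha,y)\ll\Psi(x,y)\sqrt u\log y$ (by \eqref{majcol}) and splitting the first integral at $|\tau|=1$, one obtains a contribution $\ll\Psi(x,y)\bigl(1/u+(\log y)\e^{-c_1u}\log(T\log y)\bigr)$ to the first integral and $\ll\Psi(x,y)(\log y)\e^{-c_1u}T$ to the second, after absorbing $\sqrt u$ into the exponential with a smaller $c_1$. Summing the central and outer contributions yields both assertions.

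\emph{Main obstacle.} The substantive point is the uniform pointwise decay $|\zeta(\alpha+i\tau,y)|\ll\zeta(\alpha,y)\e^{-c_2u}$ over the wide window $1/\log y<|\tau|\le Y_\varepsilon$: for moderate $|\tau|$ the resonant primes are controlled by elementary sieve methods, but reaching $|\tau|$ as large as $\exp\{(\log y)^{3/2-\varepsilon}\}$ forces one to invoke the Vinogradov--Korobov zero-free region, and it is the reach of that region that fixes the shape of $Y_\varepsilon$ and the admissible range $1/\log y\le T\le Y_\varepsilon$. The remaining steps are routine Laplace-method bookkeeping.
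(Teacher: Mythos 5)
Your central-range analysis is essentially a re-proof of Lemma 11 of \cite{HT86} (which the paper simply cites), and that part is fine; the problem is in your outer range. You claim that for every $\tau$ with $1/\log y\ll|\tau|\le Y_\varepsilon$ one has
$\log\bigl(\zeta(\alpha,y)/|\zeta(\alpha+i\tau,y)|\bigr)\gg\sum_{p\le y}p^{-\alpha}\asymp u$,
on the grounds that $1-\cos(k\tau\log p)$ is bounded away from $0$ for a positive proportion of $p\le y$. But the proportion that matters is the one weighted by $p^{-\alpha}$, and this weight is concentrated on the primes $p\in[y^{1-O(1/\xi(u))},y]$. Take $\tau=2\pi/\log y$, which lies in your range and outside the reach of the quadratic bound $1-\cos\theta\gg\theta^2$ (here $\theta=\tau\log p\approx2\pi$): for all the weight-carrying primes, $\tau\log p=2\pi+O(1/\xi(u))$, so $1-\cos(\tau\log p)\ll\xi(u)^{-2}$, and the weighted sum is only $\asymp u/(\log (u+1))^2$, not $\gg u$. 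Hence the uniform pointwise bound $|\zeta(\alpha+i\tau,y)|\ll\zeta(\alpha,y)\e^{-c_2u}$ that you feed into both integrals over $1/\log y<|\tau|\le T$ is false near the resonances $\tau\log y\approx 2\pi m$ with $m=o(\log u)$; this is exactly why the estimate (4.42) of \cite{BT05}, quoted as \eqref{majzetay} in the paper, has the saving $(\tau\log y)^2u/\bigl((\log(u+1))^2+(\tau\log y)^2\bigr)$ rather than $u$. The lemma itself survives, because on the resonance region $1/\log y<|\tau|\le\log(u+1)/\log y$ the interval is short and the available saving $H(u)^{-c}$ is still $\ll1/u$ — this is precisely the extra case distinction the paper's proof makes — but your argument as written rests on an incorrect intermediate estimate and needs this repair.

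A second, related gap: the decay of $|\zeta(\alpha+i\tau,y)|/\zeta(\alpha,y)$ for $|\tau|$ as large as $Y_\varepsilon$, which you yourself flag as the main obstacle, is only sketched (``sieve bounds for moderate $|\tau|$, twisted prime sums with the Vinogradov--Korobov error beyond''). That sketch is a plan, not a proof; it is exactly the content of \cite[(4.42)]{BT05} (see also Lemma 8 of \cite{HT86}), and the paper's proof consists of citing that bound, splitting the outer range at $|\tau|=\log(u+1)/\log y$ as above, and absorbing the factor $\sqrt u\log y$ from \eqref{majcol} into the exponential by taking $c_1<c_0/2$. If you want a self-contained argument you must both prove that pointwise bound in its correct (resonance-sensitive) form and then perform the same splitting; otherwise the efficient route is the paper's: quote \cite{BT05} and \cite{HT86} and do only the bookkeeping.
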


\begin{proof}  
Nous \'enon\c cons  une majoration ad\'equate de la quantit\' e  
$\zeta (s,y)/\zeta (\alpha ,y)$ dans un large
domaine en $\tau_1=\Im m s_1$ sous la condition $  y\geq (\log x)^{C}$. D'apr\`es la majoration (4.42) de \cite{BT05}, nous avons, pour chaque 
$\varepsilon>0$ fix\'e,  $s =\alpha +i\tau $, les majorations
\begin{equation}\label{majzetay}
 \frac{\zeta (s ,y)}{ 
\zeta (\alpha ,y)}\ll
\left\{\begin{array}{ll}\exp\big\{ -c_0  {(\tau \log y)^2u} \big\}& \qquad \big(|\tau |\le 
1/\log y\big) \cr\cr
 \exp\Big\{\frac{-c_0(\tau\log y)^2  u}{ (\log (u+1))^2+(\tau\log y)^2}\Big\}
&\qquad  \big(1/\log y<|\tau|\le
Y_\varepsilon\big) \end{array}\right.\end{equation} 
 o\`u  
$ Y_\varepsilon $ a \'et\'e introduit en \eqref{defLY}.

Le lemme 11 de \cite{HT86} \'enonce la premi\`ere estimation pour $T=1/\log y$ avec un terme d'erreur en $O(1/u)$. Il s'agit donc de majorer la contribution des segments $[\alpha-iT,\alpha-i/\log y]$ et $[\alpha+i/\log y,\alpha+iT]$.
En utilisant la deuxi\`eme majoration de \eqref{majzetay}, nous obtenons
\begin{align*}\int_{\alpha +i/\log y}^{\alpha +i\log (u+1)/\log y}\!\!\!\big|\zeta(s,y) {x^s} \big| |\d s|&\ll x^\alpha\zeta (\alpha ,y)\int_{1/\log y}^{ \log (u+1)/\log y}\!\!\!\!\!\!\!
\exp\Big\{\frac{-c_0(\tau\log y)^2  u}{ (\log (u+1))^2+(\tau\log y)^2}\Big\}\d\tau\cr&\ll \frac{x^\alpha\zeta (\alpha ,y)}{ \log y\sqrt{u}}
(\log (u+1))H(u)^{-c_0/2}\ll \Psi(x,y)H(u)^{-c_0/3}.
\end{align*}   
De plus, lorsque $  \log (u+1)/\log y\leq T\leq Y_\varepsilon $, gr\^ace \`a \eqref{majzetay}, nous avons
\begin{align*}\int_{\alpha +i\log (u+1)/\log y}^{\alpha +iT} \big|\zeta(s,y) {x^s} \big| |\d s| &\ll  {x^\alpha\zeta (\alpha ,y)}T\e^{-c_0u/2}\ll \Psi(x,y) T(\log y)\sqrt{u}\e^{-c_0u/2};\cr
 \int_{\alpha +i\log (u+1)/\log y}^{\alpha +iT} \Big|\zeta(s,y) \frac{x^s}{s} \Big| |\d s| &\ll  {x^\alpha\zeta (\alpha ,y)}(\log (T\log y)) \e^{-c_0u/2}
\cr&\ll \Psi(x,y) (\log (T\log y))(\log y)\sqrt{u}\e^{-c_0u/2},
\end{align*}
o\`u nous avons utilis\'e \eqref{majcol}.
En choisissant $c_1<\tfrac12 c_0 $, nous obtenons le r\'esultat.

La troisi\`eme estimation s'obtient   de la m\^eme mani\`ere que  la deuxi\`eme. 
\end{proof}
\subsection{Bornes de la fonction z\^eta}

Nous aurons besoin du lemme suivant.
\begin{lemma}\label{zeta} Soient $\vartheta=\tfrac{32}{205}$ et $\varepsilon>0$. Dans un domaine d\'efini par $|\sigma+i\tau-1|>1/10$, nous avons
$$ \zeta(\sigma+i\tau)\ll_\varepsilon (|t|+1)^{\mu_\zeta(\sigma)+\varepsilon}$$
avec
$$\mu_\zeta(\sigma):=\left\{\begin{array}{ll}1/2-\sigma & { si }\, \sigma \leq 0, \cr1/2+(2\vartheta-1)\sigma & { si }\, 0\leq \sigma \leq 1/2, \cr
 2\vartheta (1-\sigma) & { si }\, 1/2\leq \sigma \leq 1, \cr
0&{ { si }\, \sigma \geq 0. }
\end{array}\right.$$
\end{lemma}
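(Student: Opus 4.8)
The plan is to read off the bound from the Phragm\'en--Lindel\"of convexity principle, using as the only substantive input a known subconvexity estimate for $\zeta$ on the critical line; everything else is classical. Throughout I write $s=\sigma+it$, so that the asserted estimate reads $\zeta(s)\ll_\varepsilon(|t|+1)^{\mu_\zeta(\sigma)+\varepsilon}$. First I would dispose of the range of bounded $|t|$: the function $\zeta$ is holomorphic on $\{|s-1|>1/10\}$ and of polynomial growth there, hence bounded on any set $\{|s-1|>1/10,\ |t|\le 2\}$, so the claimed inequality holds there with an implied constant depending only on $\varepsilon$. From now on we may assume $|t|\ge 2$, and in particular the excluded disc $|s-1|\le 1/10$ plays no further role.

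Next I would record the three reference bounds that anchor the convexity step. $(i)$ On the half-plane $\sigma\ge 1$ the classical estimate $\zeta(s)\ll\log(|t|+2)$ holds uniformly, which already yields the last case $\mu_\zeta(\sigma)=0$ for $\sigma\ge 1$. $(ii)$ On the critical line, Huxley's subconvexity bound gives $\zeta(\tfrac12+it)\ll_\varepsilon(|t|+1)^{\vartheta+\varepsilon}$ with $\vartheta=\tfrac{32}{205}$, i.e. $\mu_\zeta(\tfrac12)\le\vartheta$. $(iii)$ The functional equation $\zeta(s)=\chi(s)\zeta(1-s)$ with $|\chi(\sigma+it)|\asymp(|t|/2\pi)^{1/2-\sigma}$ for $|t|\ge 2$ (Stirling) transfers $(i)$ to the half-plane $\sigma\le 0$: since $1-\sigma\ge 1$ there, $\zeta(s)=\chi(s)\zeta(1-s)\ll|t|^{1/2-\sigma}\log(|t|+2)\ll_\varepsilon(|t|+1)^{1/2-\sigma+\varepsilon}$, which is the first case; taking $\sigma=0$ gives in particular $\mu_\zeta(0)\le\tfrac12$.

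It then remains to interpolate in the two strips $0\le\sigma\le\tfrac12$ and $\tfrac12\le\sigma\le 1$. On each strip the pole at $s=1$ is removed by passing to $(s-1)\zeta(s)$, which is entire and of finite order there; applying the Phragm\'en--Lindel\"of principle (in the form of, say, Titchmarsh, \emph{The Theory of the Riemann Zeta-function}, \S\,5.1) to $(s-1)\zeta(s)$ with the boundary bounds furnished by $(i)$, $(ii)$ and $(iii)$, and then dividing back by $|s-1|\asymp|t|$ (legitimate for $|t|\ge 2$), produces a majorant for $\mu_\zeta$ that is linear on each strip: on $[\tfrac12,1]$ the line through $(\tfrac12,\vartheta)$ and $(1,0)$, namely $2\vartheta(1-\sigma)$; on $[0,\tfrac12]$ the line through $(0,\tfrac12)$ and $(\tfrac12,\vartheta)$, namely $\tfrac12+(2\vartheta-1)\sigma$. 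Collecting the $\varepsilon$-losses (from Huxley's estimate and from the Stirling factor) into a single $\varepsilon$ gives exactly the stated $\mu_\zeta$.

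I do not expect a genuine obstacle here: the lemma merely repackages standard facts, and the only non-elementary ingredient is the cited value $\mu_\zeta(\tfrac12)\le\tfrac{32}{205}$ of Huxley. The only points requiring mild care are bookkeeping ones — removing the pole before invoking convexity, tracking the $\varepsilon$'s uniformly in $\sigma$, and observing that the excluded disc $|s-1|>1/10$ is irrelevant once the bounded-$|t|$ range has been treated separately.
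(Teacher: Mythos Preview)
The paper states this lemma without proof, treating it as a standard fact; there is nothing to compare your argument against beyond the implicit reference to convexity and Huxley's exponent $\vartheta=32/205$. Your proposed argument is exactly the standard one and is correct in substance: anchor at $\sigma=0,\ \tfrac12,\ 1$ using the functional equation, Huxley's bound, and the trivial bound respectively, then interpolate by Phragm\'en--Lindel\"of on each strip.

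One small point deserves care. Your treatment of bounded $|t|$ asserts that $\zeta$ is ``of polynomial growth'' on $\{|s-1|>1/10\}$ and hence bounded on $\{|s-1|>1/10,\ |t|\le 2\}$. This is not true if $\sigma$ is allowed to go to $-\infty$: the functional equation brings in a $\Gamma(1-s)$ factor whose size is of factorial type in $|\sigma|$, which beats the claimed majorant $(|t|+1)^{1/2-\sigma+\varepsilon}\le 3^{1/2-\sigma+\varepsilon}$. In the paper the lemma is only ever applied with $\sigma$ in a bounded range (roughly $\sigma\ge -\alpha-O(1)$), so this has no effect on its use there; but if you want a clean statement valid for all $\sigma\le 0$, either restrict explicitly to $\sigma$ in a compact interval, or handle the strip $|t|\le 2$ via the functional equation and Stirling rather than by a compactness claim.
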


\subsection{Approximation de $\zeta(s,y)$ et la fonction de Dickman }

Nous utiliserons le lemme III.5.16 de \cite{T08}  qui repose sur  la r\'egion sans z\'ero de Korobov--Vinogradov de la fonction z\^eta de Riemann qui s'\'ecrit sous la forme
\begin{equation}\label{KV}\sigma\geq 1-\frac{c}{(\log (|\tau|+3))^{2/3}(\log\log  (|\tau|+3))^{2/3}},\qquad \tau \in \RR. 
\end{equation}
o\`u $c$ est une constante absolue suffisamment petite. Nous rappelons que nous pouvons choisir $c>0$ de sorte que  dans cette r\'egion
\begin{equation}\label{majzeta'/zeta} \qquad\qquad\Big|\frac{1}{\zeta}(s)\Big|+\Big|\frac{\zeta'}{\zeta}(s)\Big|\ll \big(\log | \tau| \big)^{2/3}\big( \log\log |\tau| \big)^{1/3} \qquad\qquad (|\tau|\geq 3). 
\end{equation}

Soit $\widehat f$ la transform\'ee de Laplace d'une fonction $f$ d\'efinie par
$$\widehat f(s):=\int_{0}^\infty \e^{-st}f(t)\d t.$$
Du lemme III.5.16 de \cite{T08} d\'ecoule le r\'esultat suivant. 
\begin{lemma}[{\cite{T08}}]\label{zetasy}Soit  $\varepsilon\in\, ]0,\tfrac12[$.  Il existe un nombre  r\'eel  $y_0=y_0(\varepsilon)$,  tel  que, sous les conditions
$$y\geq y_0(\varepsilon),\qquad \sigma\geq 1-\frac{1}{(\log y)^{2/5+\varepsilon}   },\quad |\tau| \leq L_{\varepsilon}(y),$$
on ait uniform\'ement
$$\zeta(s,y)= \zeta(s)(s-1)(\log y)\widehat\rho((s-1)\log y)\Big\{ 1+O\Big(\frac{1}{L_{\varepsilon}(y)} \Big)\Big\} .$$
\end{lemma}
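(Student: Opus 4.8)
The plan is to obtain the statement as a more or less immediate reformulation of Tenenbaum's Lemma III.5.16 of \cite{T08}, the only additional ingredients being the Korobov--Vinogradov zero-free region \eqref{KV} and the bound \eqref{majzeta'/zeta}, used to insert the factor $\zeta(s)$ and to convert error terms. To see where the main term comes from I would argue as follows. For $\Re s>1$ one has $\zeta(s,y)=\zeta(s)\prod_{p>y}(1-p^{-s})$, hence
\[
\log\zeta(s,y)=\log\zeta(s)-\sum_{p>y}p^{-s}+O\!\left(\sum_{p>y}p^{-2\sigma}\right),
\]
and $\sum_{p>y}p^{-2\sigma}\ll y^{1-2\sigma}/\log y\ll 1/L_\varepsilon(y)$ in our range because $y^{1-\sigma}\le L_\varepsilon(y)$ whenever $\sigma\ge 1-(\log y)^{-2/5-\varepsilon}$ (this is exactly why the exponent $2/5+\varepsilon$ occurs: $1-(2/5+\varepsilon)=3/5-\varepsilon$ matches the exponent in $L_\varepsilon$, which in turn comes from the $3/5$ in the Korobov--Vinogradov prime number theorem). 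Writing $\sum_{p>y}p^{-s}=\int_y^\infty t^{-s}\,\d\pi(t)$, inserting $\pi(t)=\mathrm{li}(t)+R(t)$ with $R(t)\ll t\exp\{-c(\log t)^{3/5}(\log\log t)^{-1/5}\}$, and substituting $t=y^{w}$, the main part becomes the exponential integral $\int_1^\infty \e^{-zw}w^{-1}\,\d w$ with $z:=(s-1)\log y$; the classical identity $\int_1^\infty \e^{-zw}w^{-1}\,\d w=-\log\bigl(z\widehat\rho(z)\bigr)$, which follows from $u\rho'(u)=-\rho(u-1)$ together with $\widehat\rho(0)=\e^\gamma$, then identifies $\exp\{-\sum_{p>y}p^{-s}\}$ with $(s-1)(\log y)\widehat\rho((s-1)\log y)$.

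It remains to check that the contribution of $R$ is $\ll 1/L_\varepsilon(y)$. Integrating $\int_y^\infty t^{-s}\,\d R(t)$ by parts produces a boundary term $\ll y^{1-\sigma}\exp\{-c(\log y)^{3/5}(\log\log y)^{-1/5}\}$ and an integral of the same order of magnitude times $1+|\tau|$. Using $y^{1-\sigma}\le L_\varepsilon(y)$, $|\tau|\le L_\varepsilon(y)$, and the fact that for $y\ge y_0(\varepsilon)$ one has $c(\log y)^{3/5}(\log\log y)^{-1/5}\ge 3(\log y)^{3/5-\varepsilon}$, this whole contribution is $\ll\exp\{-(\log y)^{3/5-\varepsilon}\}=1/L_\varepsilon(y)$. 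Collecting the pieces gives $\log\zeta(s,y)=\log\zeta(s)+\log\bigl((s-1)(\log y)\widehat\rho((s-1)\log y)\bigr)+O(1/L_\varepsilon(y))$; exponentiating and converting $\e^{O(1/L_\varepsilon(y))}=1+O(1/L_\varepsilon(y))$ yields the claimed formula.

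The genuine difficulty, which is where I would simply defer to Tenenbaum's proof, is that the manipulations above are valid a priori only for $\Re s>1$: for $\sigma<1$ both $\zeta(s)$ and the integral $\int_y^\infty t^{-s}/\log t\,\d t$ exist only through analytic continuation, and $\sum_{p>y}p^{-s}$ diverges outright. The standard remedy is to note that, after the polar factor of $\zeta$ at $s=1$ is absorbed into the factor $s-1$, the function $\log\zeta(s,y)-\log\zeta(s)-\log\bigl((s-1)(\log y)\widehat\rho((s-1)\log y)\bigr)$ is holomorphic throughout the region cut out by \eqref{KV} and $|\tau|\le L_\varepsilon(y)$; one bounds it crudely there using \eqref{majzeta'/zeta} (for $\zeta'/\zeta$) together with the elementary estimate $\log\zeta(s,y)\ll y^{1-\sigma}\log y$, knows it to be $\ll 1/L_\varepsilon(y)$ on the half-plane $\sigma>1$ by the computation above, and transports the estimate into the range $\sigma\ge 1-(\log y)^{-2/5-\varepsilon}$ by a Phragm\'en--Lindel\"of / Borel--Carath\'eodory argument. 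This is precisely what Lemma III.5.16 of \cite{T08} packages, so in the write-up one invokes that lemma and the only remaining task is the verification, carried out above, that its error term is of size $O(1/L_\varepsilon(y))$ under the hypotheses on $\sigma$ and $\tau$.
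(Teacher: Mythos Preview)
The paper gives no proof of this lemma: it is simply quoted as ``lemme III.5.16 de \cite{T08}'', so there is nothing to compare your argument against beyond the bare citation. Your sketch is correct and in fact supplies considerably more detail than the paper does; the identification of $\int_1^\infty \e^{-zw}w^{-1}\,\d w$ with $E_1(z)=-\log\bigl(z\widehat\rho(z)\bigr)$ (which follows from $z\widehat\rho(z)=\exp(-E_1(z))$, itself a consequence of $\widehat\rho\,'(s)/\widehat\rho(s)=(\e^{-s}-1)/s$ and $\widehat\rho(0)=\e^{\gamma}$) is indeed the heart of the matter, and your bookkeeping of the contributions from higher prime powers and from $R(t)=\pi(t)-\mathrm{li}(t)$ is accurate.

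One small correction to your discussion of the extension to $\sigma<1$: no Phragm\'en--Lindel\"of or Borel--Carath\'eodory argument is needed. The two error pieces you isolate, namely $\sum_{p>y}\sum_{k\ge 2}p^{-ks}/k$ and $\int_y^\infty t^{-s}\,\d R(t)$ (after one integration by parts), are given by absolutely convergent expressions that define holomorphic functions throughout $\sigma>\tfrac12$ and throughout the zero-free region respectively, and the very same estimates you carried out for $\sigma>1$ apply verbatim there. Since both $\log\bigl(\zeta(s,y)/\zeta(s)\bigr)$ and $\log\bigl((s-1)(\log y)\widehat\rho((s-1)\log y)\bigr)$ extend holomorphically to the stated region (the former because $\zeta(s,y)$ has no zeros in $\sigma>0$ and $\zeta(s)$ none in the Korobov--Vinogradov region, the latter because $z\widehat\rho(z)=\exp(-E_1(z))$ is entire and nowhere zero), the identity between them persists by analytic continuation with the same $O(1/L_\varepsilon(y))$ error. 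This is how Tenenbaum actually proceeds.
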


Nous rassemblons dans le lemme suivant les informations concernant la fonction $\rho$ de Dickman dont nous aurons besoin  (voir le  lemme III.5.12 et le th\'eor\`eme III.5.13 de \cite{T08}).

\begin{lemma}\label{lemmarho}Nous avons  
\begin{equation}\label{majrhou} \widehat\rho(-\xi(u)+i\tau )\ll \left\{\begin{array}{ll}
\sqrt{u}\rho(u)\e^{u\xi(u)}\e^{-u\tau^2/(2\pi^2)} & \hbox{si } |\tau|\leq \pi,\\
\sqrt{u}\rho(u)\e^{u\xi(u)}H(u)^{-1}& \hbox{si } |\tau|> \pi,\end{array}{}\right.\end{equation} 
et lorsque $|\tau|>1+u\xi(u)$ et $s=-\xi(u)+i\tau $
\begin{equation}\label{majtaugrd}\widehat\rho(s )=\frac{1}{s}\Big\{ 1+O\Big(\frac{1+u\xi(u)}{|s|}\Big)\Big\}.
\end{equation}
Cette estimation vaut encore pour $s=(\alpha-1)\log y+i\tau$.\par
Lorsque $u\geq 1$, nous avons 
\begin{equation}\label{estrhou} \rho(u)=\sqrt{\frac{\xi'(u)}{2\pi}}\exp\big\{ \gamma-u\xi(u)+I(\xi(u))\big\}\bigg\{ 1+O\Big(\frac{1}{u}\Big)\bigg\}\end{equation}
avec, lorsque $u$ tend vers l'infini,
\begin{equation}\label{estIxi}I(\xi(u)):=\int_0^{\xi(u) }\frac{\e^t-1}{t}\d t \sim u ,\qquad\xi'(u)\sim 1/u.\end{equation}
\end{lemma}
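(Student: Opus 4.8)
The plan is to treat this statement as a compilation of classical facts about the Dickman function and to reduce it to citations from \cite{T08} together with a few short verifications. Formula \eqref{estrhou} is precisely Théorème III.5.13 of \cite{T08}, and the assertions about $\xi$ in \eqref{estIxi} come from Lemme III.5.12 combined with the defining relation $\e^{\xi}=1+u\xi$: differentiating the latter gives $\xi'(u)\big(\e^{\xi(u)}-u\big)=\xi(u)$, hence $\xi'(u)=\xi(u)/\big(1+u(\xi(u)-1)\big)\sim 1/u$, while $I(\xi(u))\sim u$ follows from $I'(\xi)=(\e^{\xi}-1)/\xi=u$ and the Laplace method applied to $I(\xi)=\int_0^{\xi}(\e^t-1)t^{-1}\,\d t$. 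So the only substantive work is to establish the two bounds \eqref{majrhou} and \eqref{majtaugrd} for the Laplace transform $\widehat\rho$.

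For both of these I would start from the classical identity
\[
\widehat\rho(s)=\exp\Big\{\gamma+\int_0^{-s}\frac{\e^v-1}{v}\,\d v\Big\}\qquad(s\in\CC),
\]
valid for all $s$ (the integrand extends to an entire function and the integral is path-independent); it is obtained by Laplace-transforming the delay equation $u\rho'(u)=-\rho(u-1)$, which yields $(\widehat\rho)'(s)/\widehat\rho(s)=(\e^{-s}-1)/s$, and then integrating from $\widehat\rho(0)=\int_0^{\infty}\rho(t)\,\d t=\e^{\gamma}$ (cf.\ \cite{T08}, Section~III.5). Introducing $E_1(s):=\int_s^{\infty}\e^{-t}t^{-1}\,\d t$ along the horizontal ray issued from $s$, the identity rewrites as $\gamma+\int_0^{-s}(\e^v-1)v^{-1}\,\d v=-\log s-E_1(s)$, so that
\[
\widehat\rho(s)=\frac{\e^{-E_1(s)}}{s}.
\]
From this, \eqref{majtaugrd} is immediate: for $s=-\xi(u)+i\tau$ one has $|s+r|\ge|\tau|$ for every $r\ge 0$, whence $|E_1(s)|\le \e^{-\Re s}|\tau|^{-1}=(1+u\xi(u))/|\tau|$, which is $<1$ under the hypothesis $|\tau|>1+u\xi(u)$; then $|\e^{-E_1(s)}-1|\ll|E_1(s)|$, and since $|s|\asymp|\tau|$ this gives $\widehat\rho(s)=s^{-1}\{1+O((1+u\xi(u))/|s|)\}$. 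The same computation works for $s=(\alpha-1)\log y+i\tau$ because \eqref{estalpha} gives $(\alpha-1)\log y=-\xi(u)+O\big((\log y)/L_\varepsilon(y)+1/(u\log y)\big)$, so $\e^{-\Re s}\asymp 1+u\xi(u)$.

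For \eqref{majrhou} I would again use $\widehat\rho(s)=\e^{-E_1(s)}/s$ on the line $\Re s=-\xi(u)$, which gives
\[
\big|\widehat\rho(-\xi(u)+i\tau)\big|=\frac{\exp\{-\Re E_1(-\xi(u)+i\tau)\}}{\sqrt{\xi(u)^2+\tau^2}},
\]
and then study, as a function of $\tau$, the decrease of $-\Re E_1(-\xi(u)+i\tau)$, whose value at $\tau=0$ matches $\rho(u)$ via \eqref{estrhou} and which tends to $0$ as $|\tau|\to\infty$. Setting
\[
\Delta(\tau):=\Re\big(E_1(-\xi(u)+i0^{+})-E_1(-\xi(u)+i\tau)\big)=\e^{\xi(u)}\int_0^{\tau}\frac{t\cos t-\xi(u)\sin t}{\xi(u)^2+t^2}\,\d t,
\]
the leading behaviour near $\tau=0$ is governed by $I''(\xi(u))=1/\xi'(u)\asymp u$; for $|\tau|\le\pi$ the bound $1-\cos\tau\ge 2\tau^2/\pi^2$ (Jordan's inequality) shows that $\Delta(\tau)\le -u\tau^2/(2\pi^2)$ with room to spare, and for $|\tau|>\pi$ one checks in the same way that $\Delta(\tau)\le\log H(u)^{-1}+O(1)$, the oscillatory tail of the integral being dominated by the $O(1/\xi(u))$ already accumulated. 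Combining with \eqref{estrhou}, whose prefactor $\sqrt{\xi'(u)/2\pi}$ is $\asymp u^{-1/2}$, produces the factors $\sqrt{u}$ and $\e^{u\xi(u)}$ in \eqref{majrhou}, and the extension of \eqref{majtaugrd} to $s=(\alpha-1)\log y+i\tau$ is handled exactly as above. The hard part will be the uniformity in $u$ of this analysis of $\Delta(\tau)$ over the whole range $|\tau|\le\pi$ and, especially, for $|\tau|>\pi$: the Taylor expansion only covers small $\tau$, and one needs the monotonicity/convexity of $\Re E_1$ along vertical lines that underlies the saddle-point treatment of $\rho$ in \cite{T08}, checking carefully that the constant $1/(2\pi^2)$ and the threshold $\pi$ are admissible.
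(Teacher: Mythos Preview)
The paper does not actually prove this lemma: it simply refers to \cite[lemme III.5.12 et th\'eor\`eme III.5.13]{T08} in the sentence preceding the statement and gives no argument of its own. Your proposal therefore goes further than the paper, and it does so along exactly the standard route taken in \cite{T08}: the identity $\widehat\rho(s)=\e^{-E_1(s)}/s$, the elementary bound $|E_1(-\xi(u)+i\tau)|\le \e^{\xi(u)}/|\tau|=(1+u\xi(u))/|\tau|$ yielding \eqref{majtaugrd}, and the saddle-point analysis of $\Re\log\widehat\rho$ along the line $\Re s=-\xi(u)$ for \eqref{majrhou}. Your derivations of $\xi'(u)\sim 1/u$ and $I(\xi(u))\sim u$, and your treatment of \eqref{majtaugrd} (including the extension to $s=(\alpha-1)\log y+i\tau$ via \eqref{estalpha}), are clean and correct.

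The only soft spot is the one you yourself flag: the uniform bound $\Delta(\tau)\le -u\tau^2/(2\pi^2)$ on the whole range $|\tau|\le\pi$, and the bound $\Delta(\tau)\le -\log H(u)+O(1)$ for $|\tau|>\pi$, are not consequences of Jordan's inequality alone. One needs to control the higher derivatives of $\tau\mapsto\Re\log\widehat\rho(-\xi(u)+i\tau)$ (equivalently, to bound $I''(\xi)$ from below and $I'''$, $I^{(4)}$ from above uniformly), which is precisely the content of the relevant parts of \cite[Ch.~III.5]{T08}. Since the paper itself is content with a citation, your sketch is already more than adequate; if you want a self-contained argument, it suffices to import those derivative estimates from \cite{T08} into your analysis of $\Delta(\tau)$.
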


\section{Calculs de s\'eries de Dirichlet}

Au Lemme \ref{estsigma2}, nous \'ecrirons la somme $\sigma_2(x,y,M; a)$ comme une double int\'egrale complexe \`a partir  de la formule
$$\sigma_2(x,y,M; a) =\sum_{n\in S(x,y)} \sum_{\substack{x/M<q\leq x\\ (q,na)=1}}\frac{1}{\phi(q)}.$$ 
Soit $$\Phi_2(s;a):=\sum_{(n,a)=1}\frac{1}{\phi(n)n^s}.$$
Nous noterons dans toute la suite 
$$g_m(s):=\prod_{p\mid m} \Big(1-\frac{1}{p^s}\Big).$$

Nous sommes amen\'es \`a consid\'erer 
 \begin{equation}\label{defFa}F_a(s_1,s_2;y):=\sum_{P(n)\leq y} \frac{1}{n^{s_1}} \Phi_{2}(s_2;an).
\end{equation}

Le lemme suivant contient le calcul de $F_a(s_1,s_2;y)$.

\begin{lemma}\label{calculFa}  Lorsque $1\leq |a|\leq y$, $\Re e(s_1)>0$, $\Re e(s_2)>1$, nous avons 
$$F_a(s_1,s_2;y)=F_{1} (s_1,s_2;y)\psi_{2}(s_1,s_2;a)$$
avec
\begin{align} F_{1} (s_1,s_2;y) & =
\zeta(s_1,y)    
\prod_{\substack{ p> y}}\Big(1+\frac{1 }{(1-1/p)(p^{s_2+1}-1)}\Big)
\prod_{\substack{ p\leq y}}\Big(1+\frac{(1-1/p^{s_1}) }{(1-1/p)(p^{s_2+1}-1)}\Big);\notag
\\\label{defpsi2}\psi_{2}(s_1,s_2;a)&:=g_a(s_2+1) 
\prod_{\substack{ p \mid a}} \Big(1+\frac{(1-1/p^{s_1-1}) }{(1-1/p) p^{s_2+2} }\Big)^{-1}
.\end{align}
\end{lemma}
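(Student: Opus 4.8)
The plan is to express $F_a$ as an Euler product, read off $F_{1}$ as the special case $a=1$, and obtain $\psi_2$ by cancellation, so that the only real content is one elementary identity between two closed forms. First I would compute the Euler product of $\Phi_2$: for any positive integer $k$, summing the geometric series in the local factor at a prime $p\nmid k$ gives
$$1+\sum_{j\geq 1}\frac{1}{\phi(p^j)p^{js_2}}=1+\frac{p^{-s_2}}{(p-1)(1-p^{-s_2-1})}=1+\frac{1}{(1-1/p)(p^{s_2+1}-1)}=:h_p(s_2),$$
so $\Phi_2(s_2;k)=\prod_{p\nmid k}h_p(s_2)$, with absolute convergence for $\Re e(s_2)>0$. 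In particular $\Phi_2(s_2;k)$ depends only on the radical of $k$, whence $\Phi_2(s_2;an)=\Phi_2(s_2;a)\prod_{p\mid n,\,p\nmid a}h_p(s_2)^{-1}$; the hypothesis $|a|\leq y$ ensures every prime dividing $a$ is $\leq y$, so no prime outside the range of summation intervenes.

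Substituting into \eqref{defFa},
$$F_a(s_1,s_2;y)=\Phi_2(s_2;a)\sum_{P(n)\leq y}\frac{1}{n^{s_1}}\prod_{p\mid n,\,p\nmid a}h_p(s_2)^{-1},$$
and the sum over $n$ is multiplicative with local factors only at $p\leq y$: it equals $(1-p^{-s_1})^{-1}$ when $p\mid a$ and $1+h_p(s_2)^{-1}/(p^{s_1}-1)$ when $p\nmid a$. Taking $a=1$ and writing $\Phi_2(s_2;1)=\prod_{p\leq y}h_p(s_2)\prod_{p>y}h_p(s_2)$, the factors with $p\leq y$ recombine, via $(1-p^{-s_1})^{-1}=1+(p^{s_1}-1)^{-1}$, into $\prod_{p\leq y}(1-p^{-s_1})^{-1}\bigl(1+(1-p^{-s_1})/((1-1/p)(p^{s_2+1}-1))\bigr)$; pulling out $\zeta(s_1,y)=\prod_{p\leq y}(1-p^{-s_1})^{-1}$ gives exactly the claimed formula for $F_{1}(s_1,s_2;y)$, the remaining factor $\prod_{p>y}h_p(s_2)$ supplying the product over $p>y$.

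Dividing the two Euler products, every local factor at a prime $p\nmid a$ cancels, so
$$\psi_2(s_1,s_2;a)=\frac{F_a(s_1,s_2;y)}{F_{1}(s_1,s_2;y)}=\prod_{p\mid a}\frac{1}{h_p(s_2)}\cdot\frac{(1-p^{-s_1})^{-1}}{1+h_p(s_2)^{-1}/(p^{s_1}-1)}=\prod_{p\mid a}\frac{p^{s_1}}{h_p(s_2)(p^{s_1}-1)+1}.$$
Using $h_p(s_2)=\bigl(p^{s_2+1}(p-1)+1\bigr)/\bigl((p-1)(p^{s_2+1}-1)\bigr)$ one simplifies the $p$-th factor to $\frac{p^{s_1}(p-1)(p^{s_2+1}-1)}{(p-1)p^{s_1+s_2+1}+p(p^{s_1-1}-1)}$; multiplying numerator and denominator by $p^{s_2}$ and comparing shows this equals $(1-1/p^{s_2+1})\bigl(1+(1-1/p^{s_1-1})/((1-1/p)p^{s_2+2})\bigr)^{-1}$, i.e. the $p$-th factor of $g_a(s_2+1)\prod_{p\mid a}\bigl(1+(1-1/p^{s_1-1})/((1-1/p)p^{s_2+2})\bigr)^{-1}$, which is the asserted $\psi_2$.

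I do not expect any genuine obstacle: the whole proof is the bookkeeping of Euler factors together with this last algebraic identity. The only point deserving a word of care is the legitimacy of the rearrangements: in the stated region $\Re e(s_1)>0$, $\Re e(s_2)>1$ every product and series above converges absolutely, so the factorisations and the interchange of summations are justified throughout.
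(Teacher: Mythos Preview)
Your proof is correct and follows essentially the same route as the paper: compute the Euler product $\Phi_2(s_2;k)=\prod_{p\nmid k}h_p(s_2)$, substitute into the definition of $F_a$ to obtain an Euler product over $p\leq y$, and identify $\psi_2$ as the finite product over $p\mid a$ left after comparison with the case $a=1$. The only cosmetic difference is that the paper first factors $F_a=\psi_2\cdot(\text{expression independent of }a)$ and then recognises this expression as $F_1$, whereas you compute $F_1$ first and then take the quotient; the algebraic identities verified are the same.
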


\begin{proof}
D'apr\`es la formule (23) de \cite{F12}, un simple calcul fournit
\begin{align*}\Phi_2(s;a)&=\prod_{p\nmid a} \Big(1+\frac{1}{(1-1/p)(p^{s+1}-1)}\Big)\cr&
=\prod_{p\mid a} \Big(1+\frac{1}{(1-1/p)(p^{s+1}-1)}\Big)^{-1}\zeta(s+1)
\prod_{p } \Big(1+\frac{1}{(p-1) p^{s+1} }\Big).
\end{align*} 
Nous pouvons aussi \'ecrire
\begin{equation}\label{calculPhi2a}\Phi_2(s;a)=  {g_a(s+1)}\prod_{p\mid a} \Big( {1+\frac{1}{p^{s+1}(p-1 )}}\Big)^{-1} \zeta(s+1)\zeta(s+2)
\prod_{p } \Big(1+\frac{1-1/p^{s +1}}{(p-1) p^{s+2} }\Big)
.\end{equation}  

Nous obtenons 
\begin{align*} F_a(s_1,s_2;y)&=\Phi_2(s_2;a)
\sum_{P(n)\leq y} \frac{1}{n^{s_1}}  \prod_{\substack{p\mid n\\p\nmid a}} \frac{  1-1/p^{s_2+1}}{ 1+ {1}/{(p-1) p^{s_2+1}}} 
\cr&=\Phi_{2}(s_2;a)\prod_{\substack{p\mid a }}(1-1/p^{s_1})^{-1}
 \prod_{\substack{p\leq y\\ p\nmid a}}\Big\{ 1+ \frac{  1-1/p^{s_2+1}}{(p^{s_1}-1)( 1+ {1}/{(p-1) p^{s_2+1}})} \Big\}
\end{align*}
Or
$$ 1+ \frac{  1-1/p^{s_2+1}}{(p^{s_1}-1)( 1+ {1}/{(p-1) p^{s_2+1}})}=\frac{1}{1-1/p^{s_1}}\Big( 1- \frac{1   }{p^{s_1+s_2+1}(1-1/p+1/ p^{s_2+2})} \Big)$$
donc  
\begin{equation}\label{eqFaF1}\begin{split}F_a&(s_1,s_2;y) =   \Phi_{2}(s_2;a) \zeta(s_1,y)
\prod_{\substack{ p\leq y\\p\nmid a}}\Big( 1- \frac{1   }{p^{s_1+s_2+1}(1-1/p+1/ p^{s_2+2})} \Big) \cr&
=\psi_{2}(s_1,s_2;a)\Phi_{2}(s_2;1) \zeta(s_1,y)
\prod_{\substack{ p\leq y}}\Big( 1- \frac{1   }{p^{s_1+s_2+1}(1-1/p+1/ p^{s_2+2})} \Big).
\end{split}\end{equation}
De la formule
\begin{align*}\Phi_{2}(s_2;1) & 
\prod_{\substack{ p\leq y}}\Big( 1- \frac{1   }{p^{s_1+s_2+1}(1-1/p+1/ p^{s_2+2})} \Big)\cr&=\prod_{\substack{ p> y}}\Big(1+\frac{1 }{(1-1/p)(p^{s_2+1}-1)}\Big)
\prod_{\substack{ p\leq y}}\Big(1+\frac{(1-1/p^{s_1}) }{(1-1/p)(p^{s_2+1}-1)}\Big),
\end{align*}
nous en d\'eduisons la valeur de $F_1(s_1,s_2;y)$ annonc\'ee.
\end{proof}

Nous observons ensuite que, lorsque $\ell\leq y$, on a 
\begin{equation}\label{zetayell}\sum_{\substack{P(n)\leq y\\(n,\ell)=1}} \frac{1}{n^{s}} =g_\ell(s)\zeta(s,y). \end{equation} 
Nous serons amen\'es \`a consid\'erer
\begin{equation}\label{defGa}G_{ a}(s_1,s_2;y) :=
\sum_{\substack{ka'= a }}
\sum_{\substack{\ell_1\mid k\\ (\ell_1, a')=1  }} \frac{\mu(\ell_1)}{\phi( \ell_1)} \sum_{\substack{  (r,a')=1}} \frac{\phi( \ell_1)}{\phi(r\ell_1)(rk)^{s_2}}
\frac{g_{a'r_y\ell_1}(s_1) }{k^{s_1}} \zeta(s_1,y).
\end{equation}
o\`u $r_y$ est la partie $y$-friable de $r$.

\begin{lemma}\label{calculGa} Lorsque $1\leq |a|\leq y$, $\Re e(s_1)>0$, $\Re e(s_2)>1$, nous avons 
$$G_a(s_1,s_2;y)=G_{1} (s_1,s_2;y)\psi_{2}(s_1,s_2;a)K_a( s_1,s_2 ) $$
avec
\begin{align*} G_{1} (s_1,s_2;y) & =F_{1} (s_1,s_2;y),
\cr K_a( s_1,s_2 ) &=\frac{g_a(s_1)}{ g_a(s_1+s_2)}
\prod_{p^\nu\parallel a } 
 \left(    1+\frac{1-1/p^{ s_2 } }{ p^{\nu(s_1+s_2)}} \left(   \frac{1}{ p^{s_1}-1}-\frac{1-1/p^{s_1+s_2}}{(p-1)(1-1/  p^{s_2+1}  )} \right) \right) 
.\end{align*}
De plus, la fonction $H_1 $ d\'efinie par
\begin{equation}\label{G1H1}
G_1(s_1,s_2;y)=\zeta(s_1,y)\frac{\zeta(s_2+1)\zeta(s_2+2)}{\zeta(s_1+s_2+1,y)}   H_1(s_1,s_2;y)
\end{equation}
v\'erifie
$$H_1(s_1,s_2;y) = \prod_{p>y}\Big(1+\frac{ 1-p^{s_2+1}  }{ p^{s_2+3}(1-1/p)} \Big)\prod_{\substack{ p\leq y }}\Big(1-\frac{p^{-s_2-2}+p^{ -s_1} -p^{-s_1-s_2-1}-p^{-1} }{ p^{s_2+2} (1-1/p)(1-1/p^{s_1+s_2+1} ) }\Big) $$
et
$|H_1(s_1,s_2;y)|\ll_\varepsilon 1 $ lorsque $\Re e(s_2)\geq -\tfrac32+\varepsilon$, $\Re e(s_1+s_2)\geq -1+\varepsilon$, $\Re e(s_1+2s_2)\geq -2+\varepsilon$.
\end{lemma}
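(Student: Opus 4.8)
The plan is to compute $G_a(s_1,s_2;y)$ by performing the Möbius and divisor sums in its definition \eqref{defGa} explicitly, reducing everything to an Euler product, and then to factor out the dependence on $a$ and on the pole-carrying zeta factors. First I would treat the inner sum over $r$ with $(r,a')=1$. Writing $r=r_y r'$ where $r_y$ is the $y$-friable part and $r'$ is coprime to every prime $\leq y$, the factor $g_{a'r_y\ell_1}(s_1)$ only sees the $y$-friable primes, so the sum over $r$ splits multiplicatively into a product over $p\leq y$ (with the extra Euler factor $g_p(s_1)$ appearing when $p\mid r$) and a product over $p>y$ (an ordinary $\phi$-twisted sum giving a factor related to $\Phi_2(s_2;\cdot)$ at the large primes). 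This is essentially the same bookkeeping as in the proof of Lemma \ref{calculFa}, so I expect $G_1(s_1,s_2;y)=F_1(s_1,s_2;y)$ to drop out once the $a$-dependent local factors are isolated.

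Next I would isolate the $a$-dependence. The sum $\sum_{ka'=a}\sum_{\ell_1\mid k,\ (\ell_1,a')=1}\mu(\ell_1)/\phi(r\ell_1)\cdot(\dots)$ factors over the primes $p^\nu\parallel a$: for each such prime one gets a finite local sum over how $p$ is distributed among $k$, $\ell_1$, and $r$. Collecting these local contributions, dividing by the corresponding local factor of $G_1$ (equivalently of $F_1$), and comparing with $\psi_2(s_1,s_2;a)$ from \eqref{defpsi2}, the residual local factor is exactly the claimed $K_a(s_1,s_2)$; the prefactor $g_a(s_1)/g_a(s_1+s_2)$ comes from the $g_{a'r_y\ell_1}(s_1)$ term combined with the shift $s_1\mapsto s_1+s_2$ forced by the $(rk)^{s_2}$ weight. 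This is a routine but somewhat delicate Euler-factor identity; the main care is to keep the coprimality conditions $(\ell_1,a')=1$ and $(r,a')=1$ straight so that no prime is double-counted.

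For the factorization \eqref{G1H1}, I would take the explicit product formula for $F_1(s_1,s_2;y)=G_1(s_1,s_2;y)$ from Lemma \ref{calculFa} and pull out the three zeta-type factors $\zeta(s_1,y)$, $\zeta(s_2+1)\zeta(s_2+2)$, and $\zeta(s_1+s_2+1,y)^{-1}$ by recognizing their Euler products inside $F_1$. Concretely, expand each local factor $1+\frac{(1-1/p^{s_1})}{(1-1/p)(p^{s_2+1}-1)}$ for $p\leq y$ and $1+\frac{1}{(1-1/p)(p^{s_2+1}-1)}$ for $p>y$ to leading order, match against the local factors of $\zeta(s_2+1)\zeta(s_2+2)$ and of $\zeta(s_1+s_2+1,y)^{-1}=\prod_{p\leq y}(1-p^{-s_1-s_2-1})$, and the quotient is $H_1$. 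A short algebraic simplification of the remaining local factor gives the stated closed form $1-\frac{p^{-s_2-2}+p^{-s_1}-p^{-s_1-s_2-1}-p^{-1}}{p^{s_2+2}(1-1/p)(1-1/p^{s_1+s_2+1})}$.

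Finally, the bound $|H_1(s_1,s_2;y)|\ll_\varepsilon 1$ in the stated region follows by estimating each Euler factor: one checks that the numerator of the $p\leq y$ local factor is $O(p^{-c})$ for some $c>0$ under the hypotheses $\Re(s_2)\geq-\tfrac32+\varepsilon$, $\Re(s_1+s_2)\geq-1+\varepsilon$, $\Re(s_1+2s_2)\geq-2+\varepsilon$ — the three linear constraints are exactly what is needed so that each of the four monomials $p^{-s_2-2}$, $p^{-s_1}$, $p^{-s_1-s_2-1}$, $p^{-1}$, after division by $p^{s_2+2}$, has real part of its exponent at least $1+\varepsilon$ — so the logarithm of the product converges absolutely and is $O_\varepsilon(1)$; the $p>y$ product is handled the same way and is in fact $1+O(y^{-\varepsilon})$. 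The main obstacle is not any single step but the sheer bookkeeping of the nested sums defining $G_a$: getting the $y$-friable-part splitting of $r$ correct and tracking which Euler factors belong to $G_1$ versus $K_a$ versus $\psi_2$ is where an error would most plausibly creep in, so I would organize the computation prime-by-prime from the start rather than manipulating the full series.
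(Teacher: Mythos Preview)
Your proposal is correct and follows essentially the same route as the paper's proof: compute the $r$-sum as an Euler product (splitting over $p\leq y$ and $p>y$), factor out the $a$-independent part to obtain $G_1=F_1$, then carry out the finite sums over $k,\ell_1$ to identify the remaining $a$-dependent factor as $\psi_2\cdot K_a$, and finally extract the zeta factors from $F_1$ to define $H_1$ and bound it by absolute convergence of the Euler product under the three stated linear constraints. The only organizational difference is that the paper performs the $r$-sum and then the $k,\ell_1$-sums sequentially (introducing an intermediate quantity $\psi_1$ with $\psi_1=\psi_2 K_a$), whereas you propose to work prime-by-prime throughout; this is a matter of bookkeeping, not substance.
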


\begin{rem}
Il est important d'observer que 
$K_a( s_1,0 )=1$. 
\end{rem}

\begin{proof}
La fonction $$r\mapsto \frac{\phi( \ell_1)}{\phi(r\ell_1)}  =\prod_{p^\nu\parallel r} \frac{\phi( \ell_1)}{\phi(p^\nu\ell_1)}$$ est mutiplicative. 
Ainsi
\begin{align*}\sum_{\substack{  (r,a')=1}} \frac{\phi( \ell_1)}{\phi(r\ell_1) r^{s_2}}&
\frac{g_{a'r_y\ell_1}(s_1) }{ g_{a' \ell_1}(s_1) } =
\prod_{\substack{p\nmid a'\ell_1 \\ p\leq y}}\Big(1+\frac{(1-1/p^{s_1}) }{(1-1/p)(p^{s_2+1}-1)}\Big) 
\prod_{\substack{p\mid \ell_1  }}\Big(1+\frac{1 }{ (p^{s_2+1}-1)}\Big) \cr&\qquad\qquad\qquad 
\prod_{\substack{p\nmid a'\ell_1 \\ p>y}}\Big(1+\frac{1 }{(1-1/p)(p^{s_2+1}-1)}\Big) 
\cr
 =&\frac{1}{g_{\ell_1}(s_2+1)}
\prod_{\substack{p\nmid a'\ell_1 }}\Big(1+\frac{(1-1/p^{s_1}) }{(1-1/p)(p^{s_2+1}-1)}\Big) \cr&
\prod_{ p>y} \Big(1+\frac{1 }{(1-1/p)(p^{s_2+1}-1)}\Big)\Big(1+\frac{(1-1/p^{s_1}) }{(1-1/p)(p^{s_2+1}-1)}\Big)^{-1}. 
\end{align*} 
Il vient
\begin{equation}\label{GaG1}\begin{split}G_{ a}&(s_1,s_2;y)\cr& = \zeta(s_1,y) 
\sum_{\substack{ka'= a }}
\sum_{\substack{\ell_1\mid k\\ (\ell_1, a')=1  }} \frac{\mu(\ell_1)g_{a' \ell_1}(s_1) }{\phi( \ell_1) k^{s_1+s_2}g_{\ell_1}(s_2+1)}  
\prod_{\substack{p\nmid a'\ell_1 }}\Big(1+\frac{(1-1/p^{s_1}) }{(1-1/p)(p^{s_2+1}-1)}\Big)\cr&\quad
\prod_{ p>y} \Big(1+\frac{1 }{(1-1/p)(p^{s_2+1}-1)}\Big)\Big(1+\frac{(1-1/p^{s_1}) }{(1-1/p)(p^{s_2+1}-1)}\Big)^{-1}
\cr&= G_{1}(s_1,s_2;y)\psi_{1}(s_1,s_2;a)\end{split}
\end{equation}
o\`u  
$$\psi_{1}(s_1,s_2;a):=\sum_{\substack{ka'= a }}\frac{1}{  k^{s_1+s_2}} \prod_{\substack{ p\mid a' }}\frac{1-1/p^{s_1} } {\Big(1+\frac{(1-1/p^{s_1}) }{(1-1/p)(p^{s_2+1}-1)}\Big)}\!\!\!
\sum_{\substack{\ell_1\mid k\\ (\ell_1, a')=1  }} \frac{\mu(\ell_1)}{\phi( \ell_1)}  \prod_{\substack{ p\mid  \ell_1 }}\frac{  \frac{1-1/p^{s_1}}{1-1/  p^{s_2+1}  }  } {\Big(1+\frac{(1-1/p^{s_1}) }{(1-1/p)(p^{s_2+1}-1)}\Big)}.$$
Notons 
\begin{align*}\prod_{p\leq y}\Big(1+&\frac{(1-1/p^{s_1}) }{(1-1/p)(p^{s_2+1}-1)}\Big) =\zeta(s_2+1,y)\prod_{\substack{ p\leq y }}\Big(1+\frac{ 1- p^{1-s_1}  }{p^{s_2+2}(1-1/p)  }\Big)
\cr&=\zeta(s_2+1,y)\zeta(s_2+2,y)  \prod_{\substack{ p \leq y}}\Big(1-\frac{p^{-s_2-2}+p^{1-s_1} -p^{-s_1-s_2-1}-p^{-1} }{ p^{s_2+2} (1-1/p)  }\Big)
\cr&=\frac{\zeta(s_2+1,y)\zeta(s_2+2,y)}{\zeta(s_1+s_2+1,y)}   \prod_{\substack{ p\leq y }}\Big(1-\frac{p^{-s_2-2}+p^{ -s_1} -p^{-s_1-s_2-1}-p^{-1} }{ p^{s_2+2} (1-1/p)(1-1/p^{s_1+s_2+1} ) }\Big).\end{align*}
D'autre part,
$$\prod_{p>y}\Big( 1+\frac{1 }{(1-1/p)(p^{s_2+1}-1)} \Big)=\prod_{p>y}\frac{ 1+(1-p^{s_2+1})/(p^{s_2+3}(1-1/p)) }{(1-1/p^{s_2+1})(1-1/p^{s_2+2})}.$$
Cela fournit bien la formule \eqref{G1H1}.

En utilisant 
\begin{align*}\prod_{\substack{ p\mid a' }}  &{\Big(1+\frac{(1-1/p^{s_1}) }{(1-1/p)(p^{s_2+1}-1)}\Big)}\prod_{\substack{ p\mid k \\p\nmid a'}}  {\Big(1+\frac{(1-1/p^{s_1}) }{(1-1/p)(p^{s_2+1}-1)}\Big)}\cr&=\prod_{\substack{ p\mid a  }}  {\Big(1+\frac{(1-1/p^{s_1}) }{(1-1/p)(p^{s_2+1}-1)}\Big)},\end{align*}
nous obtenons
\begin{equation} \label{calculpsi1}
\begin{split}\psi_{1}&(s_1,s_2;a)\cr=&\sum_{\substack{ka'= a }}\frac{1}{  k^{s_1+s_2}}\prod_{\substack{ p\mid a' }}\frac{1-1/p^{s_1}  } {\Big(1+\frac{(1-1/p^{s_1}) }{(1-1/p)(p^{s_2+1}-1)}\Big)}
\sum_{\substack{\ell_1\mid k\\ (\ell_1, a')=1  }} \frac{\mu(\ell_1)}{\phi( \ell_1)}  \prod_{\substack{ p\mid  \ell_1 }}\frac{  \frac{1-1/p^{s_1}}{1-1/  p^{s_2+1}  }  } {\Big(1+\frac{(1-1/p^{s_1}) }{(1-1/p)(p^{s_2+1}-1)}\Big)}  \cr
=&\sum_{\substack{ka'= a }}\frac{1}{  k^{s_1+s_2}} \prod_{\substack{ p\mid a' }}\frac{1-1/p^{s_1}  } {\Big(1+\frac{(1-1/p^{s_1}) }{(1-1/p)(p^{s_2+1}-1)}\Big)}
\prod_{\substack{p\mid k\\ p\nmid  a'}} \left( 1-\frac{1}{p-1}  \frac{  \frac{1-1/p^{s_1}}{1-1/  p^{s_2+1}  }  } {\Big(1+\frac{(1-1/p^{s_1}) }{(1-1/p)(p^{s_2+1}-1)}\Big)}\right)
\cr
=& \psi_{2}(s_1,s_2;a)K_a( s_1,s_2 )\end{split}\end{equation} 
avec
\begin{equation}\label{defKa}K_a( s_1,s_2 ) :=g_a(s_1)\sum_{\substack{ka'= a }}\frac{1}{  k^{s_1+s_2}} 
\prod_{\substack{p\mid k\\ p\nmid  a'}}  \left(  \frac{1}{ 1-1/p^{s_1}}+\frac{p^{-s_2 }  -1 }{(p-1)(1-1/  p^{s_2+1}  )} \right) 
  .\end{equation} 
 Un calcul fournit
\begin{equation}\label{estKa}  K_a( s_1,s_2 )  =\frac{g_a(s_1)}{ g_a(s_1+s_2)}
\prod_{p^\nu\parallel a } 
 \left(    1+\frac{1-1/p^{ s_2 } }{ p^{\nu(s_1+s_2)}} \left(   \frac{1}{ p^{s_1}-1}-\frac{1-1/p^{s_1+s_2}}{(p-1)(1-1/  p^{s_2+1}  )} \right) \right) .\end{equation} 
 
 Ainsi
\begin{equation}\label{calculG}G_{a}(s_1,s_2;y)=F_{a}(s_1,s_2;y)K_a( s_1,s_2 ) .\end{equation} 
\end{proof}

Dans le lemme suivant, nous \'enon\c cons les majorations de $\psi_1$ et $\psi_2$ dont nous aurons besoin.

\begin{lemma}\label{majpsi1} Soit $\delta\in \,]0,1[$. Il existe une constante $C>1$ telle que nous ayons lorsque $(\log x)^C\leq y\leq x $,  $1\leq |a|\leq y$,
$\Re e(s_1)=\alpha$, $\Re e (s_2)\geq -(1-\delta),$
\begin{equation}\label{maj1psi1}
\big|\psi_1(s_1,s_2;a)\big|+\big|\psi_2(s_1,s_2;a)\big|\ll_\delta\tau(a).
\end{equation}
De plus, il existe une constante  $C>1$   telle  que nous ayons lorsque $(\log x)^C\leq y\leq x $, $1\leq |a|\leq y$,  $0\leq \beta_1\leq 1/6$, $0\leq \beta_2\leq 1/6$, 
$\Re e(s_1)=\alpha-\beta_1$, $\Re e (s_2)\geq -\alpha-\beta_2,$
\begin{equation}\label{maj2psi1}\psi_{1}(s_1,s_2;a)\ll |a|^{2\beta_1+2\beta_2}\tau_3(a)^2.\end{equation}
\end{lemma}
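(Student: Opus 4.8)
The plan is to exploit the multiplicativity of $\psi_1$ and $\psi_2$ in the variable $a$. For $\psi_2$ this is apparent from \eqref{defpsi2}; for $\psi_1$ it follows from the factorisation $\psi_1=\psi_2K_a$ and the product shape of $K_a$ given in \eqref{estKa}. It therefore suffices to bound $\psi_1(s_1,s_2;p^\nu)$ and $\psi_2(s_1,s_2;p^\nu)$ for each prime power $p^\nu$ and then multiply over $p^\nu\parallel a$. The only role of the friability hypothesis $(\log x)^C\le y\le x$ is, via \eqref{estalpha}, to guarantee $1-\alpha\le 2/C$; by choosing $C=C(\delta)$ large we may therefore assume $\alpha$ as close to $1$ as is convenient, while keeping $\alpha<1$ by \eqref{alpha<1}. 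I would use this throughout, together with the identities recorded in the proof of Lemma~\ref{calculGa} — in particular $\psi_2(s_1,s_2;p)=(1-p^{-s_2-1})(1+z_p)^{-1}$ with $z_p:=\dfrac{1-p^{1-s_1}}{(1-1/p)p^{s_2+2}}$, and $\psi_1(s_1,s_2;p^\nu)=\psi_2(s_1,s_2;p)\,K_{p^\nu}(s_1,s_2)$.

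For \eqref{maj1psi1}: with $\Re e(s_1)=\alpha$ ($1-\alpha$ small) and $\Re e(s_2)\ge-(1-\delta)$ one has $|1-p^{1-s_1}|\ll p^{1-\alpha}\ll p^{\delta/4}$ and $|p^{s_2+2}|\ge p^{1+\delta}$, hence $|z_p|\ll_\delta p^{-1-\delta/2}$ and $|p^{-s_2-1}|\le p^{-\delta}$, so $\psi_2(s_1,s_2;p)=1+O_\delta(p^{-\delta})$. A parallel computation gives $K_{p^\nu}(s_1,s_2)=1+O_\delta(p^{-\delta/2})$ \emph{uniformly in $\nu$}: this uses crucially that $\Re e(s_1+s_2)=\alpha+\Re e(s_2)\ge\alpha-1+\delta>0$, so that the $\nu$-dependent piece $\dfrac{1-p^{-s_2}}{p^{\nu(s_1+s_2)}}(\cdots)$ of $K_{p^\nu}$ is $O_\delta(p^{-1})$ regardless of $\nu$. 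Consequently there is a threshold $p_0=p_0(\delta)$ beyond which $|\psi_1(s_1,s_2;p^\nu)|\le\tfrac32$ for all $\nu$, while for the $O_\delta(1)$ primes $p\le p_0$ a direct estimate gives $|\psi_1(s_1,s_2;p^\nu)|=O_\delta(1)$ uniformly in $\nu$. Multiplying over $p^\nu\parallel a$ then yields $|\psi_1|\ll_\delta(\tfrac32)^{\omega(a)}\le 2^{\omega(a)}\le\tau(a)$, and the same argument gives $|\psi_2|\ll_\delta\tau(a)$.

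For \eqref{maj2psi1}: the region widens to $\Re e(s_1)=\alpha-\beta_1$, $\Re e(s_2)\ge-\alpha-\beta_2$ with $0\le\beta_1,\beta_2\le\tfrac16$, so $\Re e(s_1+s_2)$ can be as negative as $-\tfrac13$ and several of the quantities above become a power of $p$ instead of $O(1)$. I would still bound $\psi_1(s_1,s_2;p^\nu)=\psi_2(s_1,s_2;p)K_{p^\nu}(s_1,s_2)$ factor by factor. Each of the numerators and denominators occurring in \eqref{defpsi2} and \eqref{estKa} is $\ll_\delta p^{O(1)}$, and — after accounting for the cancellations within $K_{p^\nu}$ — the product defining $\psi_1(s_1,s_2;p^\nu)$ is $\ll_\delta p^{\nu(\beta_1+\beta_2)+O(\beta_1+\beta_2)}$ times a factor which is $1+O_\delta(p^{-c})$ for $p$ large and $O_\delta(1)$ otherwise. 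Multiplying over $p^\nu\parallel a$ and using $\prod_{p^\nu\parallel a}p^\nu=|a|$ together with $\rad a\le|a|$, one reaches a bound of the shape $|\psi_1|\ll_\delta|a|^{2\beta_1+2\beta_2}\tau_3(a)^2$, the extra power of $|a|$ and the additional copies of the divisor function arising from the $\nu$-dependence of $K_{p^\nu}$.

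The step I expect to be the main obstacle — in both parts — is securing the uniform estimates at the finitely many small primes $p\le p_0(\delta)$. There the factors entering \eqref{defpsi2}, \eqref{calculpsi1} and \eqref{estKa} involve denominators such as $p^{s_2+1}-1$, $1-p^{-s_2-1}$, $1-p^{-s_1-s_2}$ and $1+z_p$ which need not be comfortably bounded away from $0$ on all of the stated region; one must either identify the (partial) cancellations that keep $\psi_1$ and $\psi_2$ under control on the offending loci, or restrict attention to the sub-region where these denominators are safely bounded below — which is precisely what the strict inequality $\alpha<1$ and a suitable choice of $C=C(\delta)$ provide — handling the remaining cases by hand, and all the while keeping exact track of the exponents of $\tau$ and of $|a|$. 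It is this bookkeeping, rather than any single inequality, that carries the weight of the argument.
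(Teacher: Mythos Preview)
Your strategy coincides with the paper's: both arguments exploit the multiplicativity of $\psi_1$ and $\psi_2$ in $a$, reduce to bounding the local factor at each prime $p\mid a$, and use the freedom in $C$ to force $\alpha$ close to $1$ (so that $\Re(s_1+s_2)\ge\alpha-1+\delta>\delta/2$ in the first region). For \eqref{maj1psi1} the two proofs are essentially identical; the paper merely packages the prime-by-prime bounds through the functions $g_a(\sigma)=\prod_{p\mid a}(1-p^{-\sigma})$, writing $\psi_2\ll_\delta 1/g_a(\delta)\ll_\delta\tau(a)$ and bounding $K_a$ from~\eqref{estKa} in the same way, which is your $1+O_\delta(p^{-c})$ estimate in different notation.

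For \eqref{maj2psi1} there is one tactical difference worth flagging. In the enlarged region $\Re(s_1+s_2)$ can drop to $-\beta_1-\beta_2$ and $\Re(s_2+1)$ to $1-\alpha-\beta_2<0$, so the denominators $1-p^{-s_1-s_2}$ and $1-p^{-s_2-1}$ in the product formula~\eqref{estKa} genuinely vanish on the stated domain---this is exactly the obstacle you identify. Rather than chase the cancellations through~\eqref{estKa}, the paper returns to the \emph{sum} representation~\eqref{defKa} and bounds
\[
g_a(s_2+1)K_a(s_1,s_2)=g_a(s_1)\sum_{ka'=a}\frac{g_{a'}(s_2+1)}{k^{s_1+s_2}}\prod_{\substack{p\mid k\\p\nmid a'}}\Big(\frac{1-p^{-s_2-1}}{1-p^{-s_1}}+\frac{p^{-s_2}-1}{p-1}\Big)
\]
directly. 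This form has no dangerous denominators; the factor $|k^{-(s_1+s_2)}|\le k^{\beta_1+\beta_2}$ produces one power $|a|^{\beta_1+\beta_2}$, and the remaining $p^{\alpha+\beta_2-1}$-type terms in each local factor contribute the second, giving precisely $|a|^{2(\beta_1+\beta_2)}\tau_3(a)\tau(a)$, which combines with the $1/|g_a(1-\beta_2)|^2$ bound for $\psi_2/g_a(s_2+1)$ to yield $\tau_3(a)^2$. Your outline via~\eqref{estKa} is not wrong, but you will find that carrying it out cleanly forces you back to~\eqref{defKa}.
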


\begin{proof} Nous choisissons $C$ tel que $\alpha-1+\delta>\tfrac12\delta$. Lorsque $\Re e(s_1)=\alpha$, $\Re e (s_2)\geq -(1-\delta),$
nous avons 
$$\bigg|\frac{1-1/p^{ s_2 } }{ p^{\nu(s_1+s_2)}} \left(   \frac{1}{ p^{s_1}-1}-\frac{1-1/p^{s_1+s_2}}{(p-1)(1-1/  p^{s_2+1}  )}\right)\bigg| 
\leq \frac{2}{p^{2\alpha-2+2\delta}(1-p^{-\delta})}+ \frac{2}{(p-1) (1-p^{-\delta})} 
$$
et donc par \eqref{estKa}
$$K_a( s_1,s_2 )\ll 1/\big(g_a(1)^3 g_a(\delta) g_a(2\alpha-2+2\delta)^2\big)$$
alors que d'apr\`es le Lemme \ref{calculFa}, nous avons
$$\psi_{2}(s_1,s_2;a)\ll_\delta 1/g_a(\delta)\ll_\delta \tau(a), $$ ce qui fournit la  majoration
$$\psi_{1}(s_1,s_2;a)\ll_\delta 1/\big(g_a(1)^3 g_a(\delta)^2 g_a(2\alpha-2+2\delta)^2\big)\ll_\delta\tau(a).$$

 Pla\c cons-nous maintenant dans le cas o\`u  
$\Re e(s_1)=\alpha-\beta_1$, $\Re e (s_2)\geq -\alpha-\beta_2$ avec  $0\leq \beta_i\leq   \tfrac13$.
D'apr\`es \eqref{defpsi2}, nous avons 
$$\psi_{2}(s_1,s_2;a)\ll |g_a(s_2+1)|/|g_a(1-\beta_2)|^2. $$
D'apr\`es \eqref{defKa},
\begin{equation} g_a(s_2+1)K_a( s_1,s_2 )  =g_a(s_1)\sum_{\substack{ka'= a }}\frac{g_{a'}(s_2+1)}{  k^{s_1+s_2}} 
\prod_{\substack{p\mid k\\ p\nmid  a'}}  \left(  \frac{ 1-1/  p^{s_2+1}   }{ 1-1/p^{s_1}}+\frac{p^{-s_2 }  -1 }{(p-1)} \right) 
  .\end{equation}  
Nous avons 
\begin{align*} K_a&( s_1,s_2 ) g_a(s_2+1) \cr&\ll \frac{1}{g_a(\alpha-\beta_1)}
\prod_{p^\nu\parallel a} p^{ (\beta_1+\beta_2) \nu }\Big(\nu p^{-\beta_1-\beta_2  }  (1+p^{\alpha+\beta_2-1})+  \frac{p^{-1}+p^{ \alpha+\beta_2-1}}{1-1/p}+ \frac{1+p^{\alpha+\beta_2-1}}{1-p^{-\alpha+\beta_1}}\Big)
\cr&\ll \frac{|a|^{ \beta_1+\beta_2}}{g_a(\alpha-\beta_1)}
\prod_{p^\nu\parallel a} \Big( \nu (p^{-\beta_1-\beta_2}+p^{\alpha -1-\beta_1})+  \frac{p^{-1}+p^{ \alpha+\beta_2-1}}{1-1/p} +\frac{1+p^{\alpha+\beta_2-1}}{1-p^{-\alpha+\beta_1}}\Big)
\cr&\ll \frac{|a|^{2(\beta_1+\beta_2)}}{g_a(1)g_a(\alpha-\beta_1)^2}
\prod_{p^\nu\parallel a} \big(   2\nu +3+1/p   \big)  \ll  \tau_3(a)\tau(a)|a|^{2(\beta_1+\beta_2)}.
\end{align*} 
Cela fournit la majoration annonc\'ee puisque 
$$ \tau_3(a)\tau(a)/|g_a(1-\beta_2)|^2\ll  \tau_3(a)^2 .$$  
\end{proof}

\section{Estimation de $\widetilde\sigma_1$ et $\sigma_2$ par des int\'egrales doubles}
Nous rappelons la d\'efinition \eqref{deftildes} de $\widetilde\sigma_1(x,y,M; a)$ et \eqref{defsigma12} celle de $\sigma_2(x,y,M; a)$.

\begin{lemma}\label{estsigma2}  Il existe une constante $C>1$ telle que nous ayons lorsque $(\log x)^C\leq y\leq x $, $1\leq |a|\leq y$, $(\log x)^{13}\leq T_1\leq T_2^{1/4}\leq \sqrt{x}$, $ T_2\geq (\log x)^2$, $\kappa=1/\log x$, $M\leq x^{1/2}$, 
\begin{align*} \widetilde\sigma_1  (x,y,M; a)    
&=\frac{1}{(2\pi i)^2}\int_{\alpha -iT_1}^{\alpha +iT_1} \!\!\!\int_{\kappa-iT_2}^{\kappa+iT_2}G_a(s_1,s_2;y)x^{s_1}M^{s_2} \frac{\d s_2\d s_1}{(s_2+s_1)s_2}\cr&\quad+O\Big(\tau(a) \frac{\Psi(x,y)}{ T_1^{1/3}}+\tau(a)\frac{\Psi(x,y)}{ T_2^{1/2}}  \Big).\end{align*}
et
\begin{align*} 
\sigma_2(x,y,M; a)&=  \frac{1}{ (2\pi i)^2} \int_{\alpha -iT_1}^{\alpha +iT_1}\int_{\kappa-iT_2}^{\kappa+iT_2}F_a(s_1,s_2;y) x^{s_1+s_2}(1-M^{-s_2})\frac{\d s_2\d s_1}{ s_1s_2}
\\ &\quad +O\Big(\frac{\Psi(x,y)}{ T_1^{1/3}}+\frac{\Psi(x,y)}{ T_2^{1/2}}\Big).\end{align*}
\end{lemma}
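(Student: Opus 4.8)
The plan is to realise each of $\widetilde\sigma_1$ and $\sigma_2$ as a truncated double contour integral by two successive applications of the effective (truncated) Perron formula: one in a ``friable'' variable, with contour on the line $\Re e(s_1)=\alpha$, and one in an ``arithmetic'' variable, with contour on the line $\Re e(s_2)=\kappa=1/\log x$. The Dirichlet series that appear are exactly $F_a(s_1,s_2;y)$ and $G_a(s_1,s_2;y)$ of \eqref{defFa}, \eqref{defGa}, computed in Lemmes \ref{calculFa} and \ref{calculGa}.

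For $\sigma_2$ I would start from $\sigma_2(x,y,M;a)=\sum_{n\in S(x,y)}\sum_{x/M<q\le x,\ (q,na)=1}1/\phi(q)$. First apply truncated Perron at height $T_2$ to the inner sum over $q$: its Dirichlet series is $\Phi_2(s_2;na)$, absolutely convergent for $\Re e(s_2)=\kappa>0$, and the weight produced is $x^{s_2}(1-M^{-s_2})$. Since $\Phi_2(\kappa;na)\ll\sum_q 1/(\phi(q)q^{\kappa})\ll\log x$ uniformly in $n,a$, the Perron error is $\ll x^{\kappa}(\log x)/T_2$ per $n$, hence $\ll\Psi(x,y)(\log x)/T_2\ll\Psi(x,y)/T_2^{1/2}$ after summation over $n\in S(x,y)$, using $T_2\ge(\log x)^2$. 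In the main term one then has $\sum_{n\in S(x,y)}\Phi_2(s_2;na)$ inside the $s_2$-integral; applying truncated Perron at height $T_1$ to this sum over the $y$-friable $n\le x$ — whose Dirichlet series is precisely $F_a(s_1,s_2;y)$ by \eqref{defFa} — produces the factor $x^{s_1}/s_1$ and reconstitutes $\frac1{(2\pi i)^2}\int_{\alpha-iT_1}^{\alpha+iT_1}\int_{\kappa-iT_2}^{\kappa+iT_2}F_a(s_1,s_2;y)x^{s_1+s_2}(1-M^{-s_2})\,\d s_2\,\d s_1/(s_1s_2)$. The error here is the truncated-Perron error of a friable sum with coefficients of size $\ll\log x$: one bounds its ``distant'' part ($|\log(x/n)|\ge1$) by $\ll(\log x)\,x^{\alpha}\zeta(\alpha,y)/T_1\ll\Psi(x,y)(\log x)(\log y)\sqrt u/T_1$ via \eqref{majcol}, and its ``close'' part by a short-interval estimate for $\Psi$; after integration against $|x^{s_2}(1-M^{-s_2})/s_2|$ over $|\Im m s_2|\le T_2$ this gives $\ll\Psi(x,y)(\log x)^{O(1)}/T_1\ll\Psi(x,y)/T_1^{1/3}$ since $T_1\ge(\log x)^{13}$.

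For $\widetilde\sigma_1$ I would work from \eqref{deftildes}, where the sum over $r$ runs over the fixed range $1\le r<M/k$. For each $k,\ell_1,r$, write $\Psi_{a'r\ell_1}(x/k,y)-\Psi_{a'r\ell_1}(rx/M,y)=\#\{rx/M<n\le x/k:\ P(n)\le y,\ (n,a'r\ell_1)=1\}$ and apply truncated Perron at height $T_1$; since $a',\ell_1\le|a|\le y$ and $n$ is $y$-friable, \eqref{zetayell} identifies the Dirichlet series as $g_{a'r_y\ell_1}(s_1)\zeta(s_1,y)$, with weight $(x/k)^{s_1}-(rx/M)^{s_1}$. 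Weighting by $1/\phi(r\ell_1)$, summing over $r<M/k$ and over the $O(\tau_3(a))$ pairs $(k,\ell_1)$, one controls these errors as above using $\sum_{r<M/k}r^{\alpha}/\phi(r\ell_1)\ll(M/k)^{\alpha}$, the identity $(x/M)^{\alpha}(M/k)^{\alpha}=(x/k)^{\alpha}\le x^{\alpha}$ together with \eqref{majcol} and the short-interval estimate, and Lemme \ref{majpsi1} to absorb the arithmetic factors, obtaining $\ll\tau(a)\Psi(x,y)/T_1^{1/3}$. One is then left with partial sums over $r<M/k$ inside the $s_1$-integral; completing each to its Dirichlet series by truncated Perron at height $T_2$ — taking, for the term carrying $(rx/M)^{s_1}$, the $s_2$-contour on $\Re e(s_2)=\alpha+\kappa$, so that the substitution $s_2\mapsto s_2+s_1$ brings it back to $\Re e(s_2)=\kappa$ (the resulting adjustment of the truncation by $\pm|\Im m s_1|\le T_1\ll T_2$ being negligible, no poles being crossed) — produces the kernel $(M/k)^{s_2+s_1}/(s_1+s_2)$ against the other term's kernel $(M/k)^{s_2}/s_2$; the partial-fraction identity $\frac1{s_1s_2}-\frac1{s_1(s_1+s_2)}=\frac1{s_2(s_1+s_2)}$ then assembles exactly $G_a(s_1,s_2;y)x^{s_1}M^{s_2}/(s_2(s_1+s_2))$, in view of the formula for $G_a$ of Lemme \ref{calculGa}. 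The $T_2$-truncation error, integrated against $|\zeta(s_1,y)(x/k)^{s_1}/s_1|$ over $|\Im m s_1|\le T_1$ (which is $\ll\Psi(x,y)$ by Lemme \ref{estcol2}), is $\ll\tau(a)\Psi(x,y)(\log x)^{O(1)}/T_2$, hence $\ll\tau(a)\Psi(x,y)/T_2^{1/2}$ because $T_1\le T_2^{1/4}$ and $T_1\ge(\log x)^{13}$ force $T_2\ge(\log x)^{52}$.

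The main obstacle is the uniform estimation of the truncated-Perron error for the friable sums. Because the $y$-friable integers form a thin set, one cannot bound the number of them in the short interval of relative length $\asymp1/T_1$ about the truncation point $x/k$ (resp.\ $rx/M$) by its length; instead one must invoke an estimate of the shape $\Psi(z(1+\eta),y)-\Psi(z,y)\ll\eta\Psi(z,y)$, valid for $z\le x$ and $\eta$ down to order $1/T_1$, which is precisely where the Hildebrand--Tenenbaum theory (Lemme \ref{estcol}, \eqref{estHild}) and the integral bounds of Lemme \ref{estcol2} enter, together with \eqref{majcol} for the complementary range. The calibrations $T_1\ge(\log x)^{13}$, $T_2\ge(\log x)^2$ and $T_1\le T_2^{1/4}$ are exactly what is needed for the resulting powers of $\log x$ and of $u$ to be absorbed into the stated errors $\Psi(x,y)/T_1^{1/3}$ and $\Psi(x,y)/T_2^{1/2}$. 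A secondary point is the justification of the interchanges of summation and integration, which follows from absolute convergence on the lines $\Re e(s_1)=\alpha>0$, $\Re e(s_2)=\kappa>0$, and from the finiteness of the divisor sums $\sum_{ka'=a}\sum_{\ell_1\mid k}$.
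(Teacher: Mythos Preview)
Your plan is sound and, for $\sigma_2$, essentially matches the paper's. One correction: splitting the $T_1$-Perron remainder at $|\log(x/n)|\ge 1$ makes the ``close'' range the whole interval $[x/e,ex]$, which is not short at all. The paper cuts at $T_1^{-1/2}$: the distant part is then $\ll x^\alpha\zeta(\alpha,y)(\log M)/T_1^{1/2}$, while the close part is
\[
\ll(\log M)\big(\Psi(x+x/\sqrt{T_1},y)-\Psi(x-x/\sqrt{T_1},y)\big)\le(\log M)\,\Psi(2x/\sqrt{T_1},y)\ll(\log M)\,x^\alpha\zeta(\alpha,y)\,T_1^{-\alpha/2},
\]
by Hildebrand's subadditivity $\Psi(z_1+z_2,y)\le\Psi(z_1,y)+\Psi(z_2,y)$ followed by the Rankin bound. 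You clearly have this mechanism in mind (you invoke a short-interval estimate); just use the right threshold.

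For $\widetilde\sigma_1$ your route differs from the paper's. The paper dispatches both the constraint $r<M/k$ and the weight $1-(kr/M)^{s_1}$ in a single stroke via the exact identity
\[
\frac{1}{2\pi i}\int_{\kappa-i\infty}^{\kappa+i\infty}\frac{z^{s_2}\,\d s_2}{s_2(s_2+s_1)}=\frac{1-z^{-s_1}}{s_1}\,{\bf 1}_{z\ge 1}\qquad(\kappa>0,\ \Re e\,s_1>0),
\]
applied with $z=M/(kr)$: summing freely over $r\ge 1$ immediately produces the $r$-Dirichlet series of $G_a$ and the kernel $(M/k)^{s_2}s_1/\big(s_2(s_2+s_1)\big)$, with $s_2$ ranging over the full vertical line; truncation to $|\tau_2|\le T_2$ is done only afterwards, via the crude bound $G_a(s_1,s_2;y)\ll\tau(a)\zeta(\alpha,y)$ and $|\zeta(s_1,y)|\le\zeta(\alpha,y)$, giving a tail $\ll x^\alpha\zeta(\alpha,y)T_1(\log T_2)/T_2$. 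Your alternative --- truncated Perron separately on the two friable-count terms, the contour shift $s_2\mapsto s_2+s_1$ for the one carrying $(rx/M)^{s_1}$, then the partial-fraction recombination $\tfrac1{s_1s_2}-\tfrac1{s_1(s_1+s_2)}=\tfrac1{s_2(s_1+s_2)}$ --- lands at the same double integral, but with more bookkeeping (notably the $\pm\tau_1$ endpoint adjustment after the shift, which you correctly dismiss since $T_1\le T_2^{1/4}$). Both are valid; the paper's identity is simply tidier, as it avoids splitting the two terms and the contour shift altogether.
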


\begin{proof} Montrons en d\'etail la somme $\sigma_2(x,y,M; a)$ d\'efinie en \eqref{defsigma12}. Une interversion de sommation fournit
$$\sigma_2(x,y,M; a)=\sum_{n\in S(x,y)} \sum_{\substack{x/M<q\leq x\\ (q,na)=1}}\frac{1}{\phi(q)}.$$
En utilisant la majoration $n/\phi(n)\ll \log\log (3n)$, une  formule de Perron    \cite[corollaire II.2.3 et II.2.4]{T08} fournit lorsque $\kappa=1/\log x$
\begin{align*}
\sum_{\substack{x/M<q\leq x\\ (q,na)=1}}\frac{1}{\phi(q)} 
=&\frac{1}{ 2\pi i} \int_{\kappa-iT_2}^{\kappa+iT_2} \Phi_{2}(s_2;an) x^{s_2}(1-M^{-s_2})\frac{\d s_2}{ s_2}
\\ &+O\Big(\frac{(\log x)+(\log T_2)(\log\log x)}{ T_2}+
\frac{(\log\log x)M}{ x}\Big).\end{align*}
Nous prenons $ T_2\geq (\log x)^2$ de sorte que le premier terme d'erreur soit $\ll T_2^{-1/2}$. De plus, le terme principal est  $O(\log M)$.

Nous rappelons la d\'efinition \eqref{defFa} de $F_a(s_1,s_2;y).$
Une deuxi\`eme formule de Perron \cite[th\'eor\`eme II.2.3]{T08} fournit alors 
\begin{align*}
\sigma_2(x,y,M; a)&=\sum_{n\in S(x,y)} \sum_{\substack{x/M<q\leq x\\ (q,na)=1}}\frac{1}{\phi(q)} 
\cr&= \frac{1}{ (2\pi i)^2} \int_{\alpha -iT_1}^{\alpha +iT_1}\int_{\kappa-iT_2}^{\kappa+iT_2}F_a(s_1,s_2;y) x^{s_1+s_2}(1-M^{-s_2})\frac{\d s_2\d s_1}{ s_1s_2}
\\ &\quad +O\Big(\frac{\Psi(x,y)}{ T_2^{1/2}}+\frac{\Psi(x,y)\log\log x}{ x^{1/2}} + x^{\alpha }\log M\sum_{P(n)\leq y} \frac{1}{n^{\alpha  }(1+T_1|\log (x/n)|)}\Big).\end{align*}
La contribution des $n\notin[x-x/ T_1^{1/2},x+x/ T_1^{1/2}]$ dans la somme est 
$$ \ll x^{\alpha }\zeta(\alpha ;y)(\log M) /T_1^{1/2} .$$
En supposant $T_1\geq (\log x)^{12}$ et en utilisant le Lemme \ref{estcol} et \eqref{estsigma}, nous obtenons une contribution 
$  \ll \Psi(x,y)/T_1^{1/3}$. D'apr\`es la sous-convexit\'e de $\Psi(x,y)$  \cite[theorem 4]{H85},   la contribution des $n\in[x-x/ T_1^{1/2},x+x/ T_1^{1/2}]$ dans la somme est 
\begin{align*}&\ll(\log M)\big(\Psi(x+x/ T_1^{1/2},y)-\Psi(x-x/ T_1^{1/2},y)\big) \cr&\leq(\log M)\Psi(2x/ T_1^{1/2},y)\ll
   x^{\alpha }\zeta(\alpha,y)(\log M) T_1^{-\alpha /2} 
 \ll \Psi(x,y) /T_1^{1/3}  \end{align*}
dans le domaine $y\geq (\log x)^C$. L'avant-derni\`ere majoration d\'ecoule de la majoration de Rankin $\Psi(x,y)\leq x^\alpha\zeta(\alpha,y)$ et la derni\`ere majoration provient de \eqref{majcol} coupl\'e avec l'in\'egalit\'e $T_1\geq (\log x)^{12}$. 

 Cela ach\`eve la preuve de l'estimation de $\sigma_2(x,y,M; a)$.

Nous rappelons la d\'efinition \eqref{deftildes} de $\widetilde\sigma_1(x,y,M; a)$.
Gr\^ace au calcul \eqref{zetayell}, une formule de Perron fournit 
\begin{align*}\Psi_{a'r\ell_1}( x/k ,y) -\Psi_{a'r\ell_1}( rx/M ,y ) =&\frac{1}{ 2\pi i} \int_{\alpha -iT_1}^{\alpha +iT_1} g_{a'r_y\ell_1}(s_1)\zeta(s_1,y) \frac{x^{s_1}}{ k^{s_1}}  (1 -(kr/M)^{s_1})\frac{\d s_1}{ s_1}
\\ &+O\Big(\frac{\Psi(x,y)}{ T_1^{1/3}} \Big),\end{align*} 
o\`u $r_y$ est la partie $y$-friable de $r$ et $c>0$ une constante positive convenable.
Puis,  nous estimons la somme en $r$ gr\^ace une autre application de la formule de Perron. 
\`A partir de la formule
$$\qquad\qquad\frac{1}{ 2\pi i} \int_{\kappa-i\infty}^{\kappa+i\infty}     x^{s } \frac{\d s }{ s (s+w)}=\frac{1-x^{-w}}{w}{\bf 1}_{x\geq 1}\qquad\qquad (\kappa>0,\, x\in \RR_{>0}, \Re e(w)>0),$$
nous obtenons 
$$ \sum_{\substack{1\leq r<M/k\\ (r,a'n)=1}}\frac{\phi( \ell_1)g_{a'r_y\ell_1}(s_1) }{\phi(r\ell_1) }(1 -(kr/M)^{s_1})=
\frac{1}{ 2\pi i }  \int_{\kappa-i\infty}^{\kappa+i\infty}   \sum_{\substack{r=1}}^\infty\frac{\phi( \ell_1)g_{a'r_y\ell_1}(s_1) }{\phi(r\ell_1)r^{s_2} }\frac{ (M/k )^{s_2}s_1 \d s_2 }{ s_2(s_2+s_1)}.$$ 
Nous avons donc
\begin{align*} \widetilde\sigma_1  (x,y,M; a)    
&=\frac{1}{(2\pi i)^2}\int_{\alpha -iT_1}^{\alpha +iT_1} \!\!\!\int_{\kappa-i\infty}^{\kappa+i\infty}G_a(s_1,s_2;y)x^{s_1}M^{s_2} \frac{\d s_2\d s_1}{(s_2+s_1)s_2}\cr&\quad+O\Big(\tau(a) \frac{\Psi(x,y)}{ T_1^{1/3}}\ \Big),\end{align*} 
o\`u $G_a$ a \'et\'e d\'efini en \eqref{defGa}. En utilisant les Lemmes \ref{calculGa} et \ref{majpsi1}, nous avons dans les segments consid\'er\'es, lorsque $|\tau_2|\geq 1$,
$$G_a(s_1,s_2;y)\ll \tau(a)\zeta(\alpha,y).$$
Il est donc possible de tronquer la droite d'int\'egration en $s_2$ en un segment d\'efini par $|\tau_2|\leq T_2$ au prix d'un terme d'erreur 
$$\ll x^\alpha \zeta(\alpha,y)\frac{T_1(\log T_2)}{ T_2}\ll \frac{\Psi(x,y)}{T_2^{1/3}}.
$$
\end{proof}
o\`u nous avons utilis\'e \eqref{majzeta'/zeta}, puis \eqref{majcol}.
\goodbreak 

\section{Premi\`ere application du th\'eor\`eme des r\'esidus}

Nous introduisons la fonction $\widetilde G_a(s_1,s_2;y)$ construite \`a partir de la fonction  $G_a(s_1,s_2;y)$ introduite au Lemme \ref{calculGa} de la mani\`ere suivante 
$$
\widetilde G_a(s_1,s_2;y) : =G_a(s_1,s_2;y)\frac{\zeta(s_1+s_2+1,y)}{\zeta(s_1+s_2+1)}.$$ 
Le Lemme \ref{calculGa} coupl\'e avec \eqref{G1H1} et \eqref{calculpsi1} fournit
\begin{equation}\label{tildeG1H1}\begin{split}
\widetilde G_a(s_1,s_2;y) =\psi_{1}(s_1,s_2;a)\zeta(s_1,y)\frac{\zeta(s_2+1)\zeta(s_2+2)}{\zeta(s_1+s_2+1)}   H_1(s_1,s_2;y),\end{split}
\end{equation} 
o\`u une expression de $\psi_{1}(s_1,s_2;a) $ a \'et\'e donn\'ee en \eqref{calculpsi1}.
Une premi\`ere application du th\'eor\`eme des r\'esidus fournit l'estimation interm\'ediaire sui\-vante.
\begin{lemma} \label{lem2sigma}
Soit $A>0$. Il existe des constantes $C\geq  \tfrac52,$ $\delta>0$ telles  que,  lorsque  $(\log x)^C\leq y\leq x^{1/C}$, $1\leq |a|\leq M_1\leq y $,   $ (\log x)^{13}\leq T_1\leq T_2^{1/4}\leq \sqrt{x}$,   $(\log x)^{12}\leq T_2\leq \sqrt{y}$, $\kappa'=-\alpha +1/\log M_1$, nous avons 
\begin{align*}\sigma (x,y,M_1; a) =&\frac{1}{(2\pi i)^2}\int_{\alpha -iT_1}^{\alpha +iT_1} \!\!\!\int_{\kappa'-iT_2}^{\kappa'+iT_2}\widetilde G_a(s_1,s_2;y)x^{s_1}     M_1^{s_2}     \frac{\d s_2\d s_1}{ s_2( s_2+s_1)}
\cr&
  +O_{A}\Big(\tau_3(a)\Psi(x,y)\big\{ H(u)^{-\delta}(\log x)^{-A} +y^{-\delta}+T_1^{-1/3}+\tau_3(a) T_2^{-1/3}\big\}  \Big).\end{align*}
\end{lemma}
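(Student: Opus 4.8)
The plan is to run the machinery of \S2--\S5 in sequence. Since $C\ge 15/7$ and $M_1\le M_2:=x^{7/15}$, the estimate \eqref{qpetit} together with \eqref{diffsigma} and Lemma~\ref{lemsigma} gives, in the stated range,
\[\sigma(x,y,M_1;a)=\big[\widetilde\sigma_1-\sigma_2\big](x,y,M_2;a)-\big[\widetilde\sigma_1-\sigma_2\big](x,y,M_1;a)+O\Big(\tau_3(a)\Psi(x,y)\big\{H(u)^{-\delta}(\log x)^{-A}+y^{-\delta}\big\}\Big).\]
Applying Lemma~\ref{estsigma2} to each of the four quantities on the right, with the common parameters $T_1,T_2,\kappa=1/\log x$, replaces $\widetilde\sigma_1(x,y,M;a)$ by a double integral of $G_a(s_1,s_2;y)x^{s_1}M^{s_2}/(s_2(s_2+s_1))$ and $\sigma_2(x,y,M;a)$ by a double integral of $F_a(s_1,s_2;y)x^{s_1+s_2}(1-M^{-s_2})/(s_1s_2)$, both on $\Re s_1=\alpha$, $\Re s_2=\kappa$, truncated at heights $T_1,T_2$, at the cost of $O\big(\tau(a)\Psi(x,y)(T_1^{-1/3}+T_2^{-1/2})\big)$. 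It thus remains to turn $\big[\widetilde\sigma_1-\sigma_2\big](x,y,M;a)$ into the asserted $\widetilde G_a$-integral on $\Re s_2=\kappa'=-\alpha+1/\log M$, up to admissible errors.

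For each $M\in\{M_1,M_2\}$ I would first bring the $\sigma_2$-integral into the same shape as the $\widetilde\sigma_1$-integral, using the linear change of variables $(s_1,s_2)\mapsto(s_1+s_2,-s_2)$ (Jacobian $\pm1$): this converts the weight $x^{s_1+s_2}(1-M^{-s_2})$ into $x^{s_1}(1-M^{s_2})$ and replaces $F_a(s_1,s_2;y)$ by $F_a(s_1+s_2,-s_2;y)$, which is represented by a convergent expression as long as $\Re(s_1+s_2)>0$; after a pole-free shift of the outer contour back to $\Re s_1=\alpha$ the integrand has the form $F_a(s_1+s_2,-s_2;y)x^{s_1}(1-M^{s_2})/((s_1+s_2)s_2)$, and splitting $1-M^{s_2}=-M^{s_2}+1$ separates a piece carrying the weight $x^{s_1}M^{s_2}$ (the same as that of $\widetilde\sigma_1$) from a piece independent of $M$, which will cancel in the difference of the $M_2$- and $M_1$-contributions. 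The identities of \S4 --- in particular $G_a=F_aK_a$, $K_a(s_1,0)=1$, the formulas \eqref{G1H1}, \eqref{calculpsi1}, and the definition $\widetilde G_a=G_a\,\zeta(s_1+s_2+1,y)/\zeta(s_1+s_2+1)$ --- are then used to check that, after aligning the two $s_2$-contours, the sum of the $G_a$-contribution and the $M$-weighted transformed $F_a$-contribution is exactly the integral of $\widetilde G_a(s_1,s_2;y)x^{s_1}M^{s_2}/(s_2(s_2+s_1))$, the passage from $G_a$ to $\widetilde G_a$ (the extra factor $\zeta(s_1+s_2+1,y)/\zeta(s_1+s_2+1)$) being precisely the contribution of the ``main term'' $\sigma_2$. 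The $M$-independent residual integrals coming from the $1$-part and from the alignment are left aside for the cancellation below.

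Next I would shift the inner contour of the $G_a$- and $F_a$-integrals from $\Re s_2=\kappa$ leftwards to $\Re s_2=\kappa'=-\alpha+1/\log M$. Since $\Re s_1=\alpha$ and $\kappa'>-\alpha$, the pole $s_2=-s_1$ is never reached; throughout the shift $\Re(s_1+s_2+1)\ge 1+1/\log M_1>1$, so the denominators $\zeta(s_1+s_2+1,y)$ and $\zeta(s_1+s_2+1)$ stay holomorphic and non-vanishing, and the only singularity crossed is at $s_2=0$, where the simple pole of $\zeta(s_2+1)$ (respectively of $F_a$) together with the factor $1/s_2$ produces a double pole for $\widetilde\sigma_1$ and, because $1-M^{-s_2}$ vanishes there, only a simple pole for $\sigma_2$. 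A direct Laurent computation at $s_2=0$, using $\operatorname{Res}_{s_2=0}G_a=\operatorname{Res}_{s_2=0}F_a$ (itself a consequence of $K_a(s_1,0)=1$), shows that the $\log M$-parts of these two residues coincide and hence cancel in $\widetilde\sigma_1(M)-\sigma_2(M)$, so that the whole residue contribution is independent of $M$ and disappears --- together with the $M$-independent integrals from the previous step --- in the difference $[\,\cdot\,](M_2)-[\,\cdot\,](M_1)$. The surviving term at $M=M_2$, now a $\widetilde G_a$-integral on $\Re s_2=\kappa'_{M_2}=-\alpha+15/(7\log x)$, is negligible: there $|M_2^{s_2}|=M_2^{-\alpha}\e^{O(1)}=x^{-7\alpha/15}\e^{O(1)}$, which, against the bound $x^{\alpha}\zeta(\alpha,y)\ll\Psi(x,y)(\log y)\sqrt u$ of \eqref{majcol} and the estimates on $\widetilde G_a$ below, is far smaller than the stated error. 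Finally, truncating the shifted $s_2$-integral to $|\Im s_2|\le T_2$ costs $O(\tau_3(a)^2\Psi(x,y)T_2^{-1/3})$ and the $s_1$-integral to $|\Im s_1|\le T_1$ costs $O(\tau_3(a)\Psi(x,y)T_1^{-1/3})$, the horizontal connecting segments and the initial $T_2^{-1/2}$ error being dominated by these; this yields the double integral and the error term of the statement.

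The main obstacle is the uniform estimation of $\widetilde G_a(s_1,s_2;y)=\psi_1(s_1,s_2;a)\zeta(s_1,y)\zeta(s_2+1)\zeta(s_2+2)H_1(s_1,s_2;y)/\zeta(s_1+s_2+1)$ on the shifted line $\Re s_2=\kappa'$, where $\Re(s_1+s_2+1)=1+1/\log M_1$ lies just above $1$ while $\Re(s_2+1)=1-\alpha$ may be appreciably below $1$. One must combine the Korobov--Vinogradov bound \eqref{majzeta'/zeta} for $1/\zeta(s_1+s_2+1)$, the subconvexity estimate of Lemma~\ref{zeta} for $\zeta(s_2+1)\zeta(s_2+2)$, the bound $|H_1(s_1,s_2;y)|\ll_\varepsilon1$ from Lemma~\ref{calculGa}, the bound \eqref{maj2psi1} for $\psi_1$ (applicable with $\beta_1=\beta_2=0$ since $\Re s_1=\alpha$ and $\Re s_2=\kappa'>-\alpha$), and $|\zeta(s_1,y)|\le\zeta(\alpha,y)$ together with \eqref{majcol}, using the hypotheses $1\le|a|\le M_1\le y$ and $M_1\le\min\{H(u)^\delta(\log x)^A,y^\delta\}$ to control the resulting powers of $\log M_1$, $\tau_3(a)$ and $x^{1/\log M_1}$, so that every one-dimensional integral that arises converges and is $\ll\tau_3(a)^2\Psi(x,y)\{H(u)^{-\delta}(\log x)^{-A}+y^{-\delta}\}$. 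A secondary technical point is to verify that all the changes of variables and contour shifts are legitimate: that $\psi_1,\psi_2,K_a,H_1$ contribute no poles in the strip $\kappa'\le\Re s_2\le\kappa$, that the Dirichlet series and Euler products involved converge on the contours used, and that the analytic continuation of $F_a$ via \eqref{eqFaF1} and \eqref{calculPhi2a} to $\Re s_2>-1$ is what is being integrated after the shift.
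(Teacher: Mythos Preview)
Your plan diverges from the paper's argument at a crucial point, and the step you rely on does not hold.

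The paper never performs the change of variables $(s_1,s_2)\mapsto(s_1+s_2,-s_2)$ in the $\sigma_2$-integral, and the identity you invoke --- that after this substitution the $M$-weighted part of the transformed $F_a$-integral combines with the $G_a$-integral to produce exactly the $\widetilde G_a$-integral --- is not a consequence of the formulas in \S4. The relation $\widetilde G_a=G_a\cdot\zeta(s_1+s_2+1,y)/\zeta(s_1+s_2+1)$ is an \emph{approximation} device: on $\Re s_2=-\varepsilon$ one has $\widetilde G_a-G_a\ll|\zeta(s_1,y)\zeta(s_2+1)|\,y^{-\alpha+\varepsilon}\tau(a)$, and the paper uses this bound to swap $G_a$ for $\widetilde G_a$ at the cost of a $y^{-\delta}$ error. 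There is no algebraic identity linking $\widetilde G_a-G_a$ to $F_a(s_1+s_2,-s_2;y)$. Moreover, the substitution itself is problematic on truncated segments: the image of $[\alpha-iT_1,\alpha+iT_1]\times[\kappa-iT_2,\kappa+iT_2]$ under a shear is not a product of segments, so the ``shift back to $\Re s_1=\alpha$'' you describe is not a one-variable contour shift.

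The paper's route is different and simpler. One forms $\sigma_3:=\widetilde\sigma_1-\sigma_2$ as a single double integral on $\Re s_2=\kappa$, then shifts $s_2$ only to a fixed small abscissa $-\varepsilon$. Two things happen there: (i) the $F_a$-piece carries the factor $x^{s_2}$, hence picks up $x^{-\varepsilon}$ and is negligible outright; (ii) the residue at $s_2=0$ is computed directly and shown to be \emph{independent of $M$} (using $K_a(s_1,0)=1$ so that $f_a(s_1,0;y)=g_a(s_1,0;y)$). Thus $\sigma_3(x,y,M;a)=\mathrm{Res}_0+\text{(integral of }G_a\text{ on }\Re s_2=-\varepsilon)+\text{error}$; at $M=M_2=x^{7/15}$ the surviving $G_a$-integral is $\ll\tau(a)\Psi(x,y)(\log x)^2/M_2^{\varepsilon}$, so in $\sigma_3(M_2)-\sigma_3(M_1)$ both $\mathrm{Res}_0$ and the $M_2$-integral drop out. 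Only then does one replace $G_a$ by $\widetilde G_a$ (error $y^{-\delta}$) and shift further from $-\varepsilon$ to $\kappa'$, with no pole crossed and horizontal segments controlled by \eqref{maj2psi1}. Note also that you invoke the hypothesis $M_1\le\min\{H(u)^{\delta}(\log x)^{A},y^{\delta}\}$, which is \emph{not} assumed in this lemma; the paper's argument does not need it here.
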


\begin{rem}
Dans le cas $M=1$ d\'evelopp\'e dans \cite{D13} pour $a=1$, le r\'esultat obtenu est sensiblement diff\'erent puisque 
$\sigma_3  (x,y,1; 1) =0$. Il est facile d'obtenir dans ce cas 
\begin{align*}\sigma (x,y,1; 1) =&Res_0(x,y;1)
 +O_{ A}\Big( \Psi(x,y)\big\{ H(u)^{-\delta}(\log x)^{-A} +y^{-\delta}+T_1^{-1/3}+  T_2^{-1/3}\big\}  \Big).\end{align*}
o\`u $Res_0(x,y;1)$ est calcul\'e  en \eqref{calculres0}. Cela compl\`ete la remarque 4 du Th\'eor\`eme \ref{th}.

\end{rem}
\begin{proof} 
 Nous \'evaluons la diff\'erence $\sigma_3 (x,y;M,a) $ des termes principaux des estimations de $\widetilde\sigma_1  (x,y;M,a) $ et $\sigma_2 (x,y;M,a)$ \'enonc\'ees dans le Lemme \ref{estsigma2} lorsque $M\leq x^{1/2}$. Nous avons 
\begin{align*} \sigma_3& (x,y,M; a)  \cr&=\frac{1}{(2\pi i)^2}\int_{\alpha -iT_1}^{\alpha +iT_1} \!\!\!\int_{\kappa-iT_2}^{\kappa+iT_2}x^{s_1}\Big( G_a(s_1,s_2;y)\frac{s_1 M^{s_2} }{ s_2+s_1 } -F_a(s_1,s_2;y)x^{s_2}(1-M^{-s_2})\Big) \frac{\d s_2\d s_1}{s_2s_1}
.\end{align*} 

Comme, en prenant la constante $C$ suffisamment grande, nous pouvons supposer $1-\alpha\leq\tfrac15$, nous d\'ecalons la droite d'int\'egration en la variable $s_2$ jusqu'\`a l'abscisse $-\varepsilon$ o\`u $\varepsilon$ est un param\`etre choisi suffisaemment petit. Dans la bande verticale d\'efinie par $-\tfrac16\leq \Re e s_2\leq \kappa$, d'apr\`es le Lemme \ref{majpsi1}, nous avons  
\begin{equation}\label{majFaGa}\bigg|\frac{ F_a(s_1,s_2;y) }{\zeta(s_2+1)\zeta(s_1,y)}\bigg|+\bigg|\frac{ G_a(s_1,s_2;y) }{\zeta(s_2+1)\zeta(s_1,y)}\bigg|\ll \tau(a).
\end{equation}
Le seul r\'esidu dans cette bande verticale correspond \`a $s_2=0$.
Nous \'ecrivons 
$$F_a(s_1,s_2;y)= f_a(s_1,s_2;y)/s_2,\qquad G_a(s_1,s_2;y)= g_a(s_1,s_2;y)/s_2.$$
D'apr\`es \eqref{calculG} et la relation $K_a( s_1,0 )=1$, nous avons 
\begin{align*}f_a(s_1,0;y) = g_a(s_1,0;y) .
\end{align*} 
Le r\'esidu en $s_2=0$ de la fonction intervenant dans l'\'ecriture de  $\sigma_3  (x,y,M; a)$ vaut
\begin{equation}\label{calculres0}Res_0(x,y;a)  =\frac{1}{2\pi i }\int_{\alpha -iT_1}^{\alpha +iT_1}   x^{s_1}\Big( -\frac{g_a(s_1,0;y)}{  s_1 } + \frac{\partial  g_a(s_1,0;y)}{\partial s_2} \Big) \frac{\d s_1 }{s_1 }
 .
\end{equation}
Ce terme  ne d\'epend pas de $M$. 
 
Par la majoration \eqref{majFaGa} et la majoration $|\zeta(s_1,y)|\leq \zeta(\alpha,y)$, nous obtenons  aussi 
$$
\frac{1}{(2\pi i)^2}\int_{\alpha -iT_1}^{\alpha +iT_1} \!\!\!\int_{-\varepsilon-iT_2}^{-\varepsilon+iT_2}F_a(s_1,s_2;y)x^{s_1+s_2}(1-M^{-s_2})  \frac{\d s_2\d s_1}{s_1s_2}\ll\tau(a)\frac{\Psi(x,y)M^{\varepsilon}T_2^{\varepsilon}}{ x^{\varepsilon}}(\log x)^2.$$
Pour justifier cette application, nous pouvons utiliser les bornes de la fonction $\zeta$ \'enonc\'ees au Lemme~\ref{zeta} et la majoration \eqref{majcol}.

Ainsi le th\'eor\`eme des r\'esidus fournit
 \begin{align*}  \sigma_3  (x,y,M; a)    =&Res_0(x,y;a)+O\Big(\tau(a)\frac{\Psi(x,y)M^{\varepsilon}T_2^{\varepsilon}}{ x^{\varepsilon/2}}+\tau(a)\frac{\Psi(x,y) }{ T_2^{1-\varepsilon}}\Big) \cr&+\frac{1}{(2\pi i)^2}\int_{\alpha -iT_1}^{\alpha +iT_1} \!\!\!\int_{-\varepsilon-iT_2}^{-\varepsilon+iT_2}G_a(s_1,s_2;y)x^{s_1}     M^{s_2}    \frac{\d s_2\d s_1}{ s_2( s_2+s_1)},\end{align*}
o\`u le second terme dans le terme d'erreur correspond aux contributions des segments horizontaux.
Le dernier terme de cette formule est $ \ll \tau(a)\Psi(x,y)(\log x)^2/M^{\varepsilon} $ et l'on peut n\'egliger ce terme lorsque $M=M_2$. 
En reportant ces r\'esultats dans \eqref{diffsigma} et en utilisant le Lemme ~\ref{lemsigma} et \eqref{qpetit}, nous obtenons l'estimation 
\begin{align*}\sigma (x,y,M_1; a) = &-\frac{1}{(2\pi i)^2}\int_{\alpha -iT_1}^{\alpha +iT_1} \!\!\!\int_{-\varepsilon-iT_2}^{-\varepsilon+iT_2}G_a(s_1,s_2;y)x^{s_1}     M_1^{s_2}     \frac{\d s_2\d s_1}{ s_2( s_2+s_1)}
\cr&
  +O\Big(\tau_3(a)\Psi(x,y)\big\{ H(u)^{-\delta}(\log x)^{-A} +y^{-\delta}+T_1^{-1/3}+T_2^{-1/3}+M_1^{\varepsilon}T_2^{\varepsilon}/x^{\varepsilon} \big\}  \Big).\end{align*}

D'apr\`es le Lemme \ref{majpsi1}, lorsque $\Re e s_1=\alpha $ et $\Re e s_2=-\varepsilon$, nous avons 
$$\widetilde G_a(s_1,s_2;y)-G_a(s_1,s_2;y)\ll \big| \zeta(s_1,y)\zeta(s_2+1) \big|y^{-\alpha  +\varepsilon}\tau(a)$$
de sorte que le remplacement de $G_a(s_1,s_2;y)$ par $\widetilde G_a(s_1,s_2;y)$ n'induit qu'un terme d'erreur 
$$\ll \tau (a)\Psi(x,y)T_2^\varepsilon  y^{-\alpha }(y/M_1)^{\varepsilon}(\log x)^2(\log y)
\ll \tau (a)\Psi(x,y) y^{-\delta}  .$$

Il reste \`a observer que nous pouvons d\'ecaler l'abscisse d'int\'egration en $s_2$ jusqu'\`a  l'ab\-scisse $\kappa'=-\alpha +1/\log M$ en appliquant le th\'eor\`eme des r\'esidus \`a un rectangle. 
C'est possible puisque  $$\mu_\zeta(1+\sigma_2)+\mu_\zeta(2+\sigma_2)\leq 1/2\qquad \qquad (\kappa'\leq \sigma_2\leq \kappa),$$ 
o\`u nous renvoyons au Lemme \ref{zeta} pour la d\'efinition de $\mu_\zeta$.
D'apr\`es \eqref{maj2psi1}, la contribution des c\^ot\'es horizontaux est 
$$\ll \Psi(x,y)\frac{\tau_3(a)^2}{T_2^{1/2}}(\log x)^2\ll \Psi(x,y)\frac{\tau_3(a)^2}{T_2^{1/3}}, $$   pourvu que $T_2\geq (\log x)^{12}.$
Dans le rectangle consid\'er\'e, il n'y a pas de p\^ole puisque $0<\alpha <1$. Cela ach\`eve la d\'emonstration du Lemme \ref{lem2sigma}.
\end{proof} 

\section{Seconde application du th\'eor\`eme des r\'esidus}

Nous montrons maintenant le lemme suivant. Nous notons 
\begin{equation}\label{defI}I(x,y;M):= \frac{1}{2\pi i}\int_{\alpha -iL_{\varepsilon}(M)}^{\alpha +iL_{\varepsilon}(M)}   
 \frac{ \zeta(s_1,y)}{\zeta(s_1)(s_1-1)} x^{s_1-1}   \d s_1.\end{equation}

\begin{lemma} \label{lem3sigma}
Soient $\varepsilon\in\, ]0, \tfrac12[$ et $A>0$. Il existe des constantes $C\geq  \tfrac52$, $\delta>0$ telles  que,    lorsque $(x,y)$ satisfait  $(H_\varepsilon)$ et $  y\leq x^{1/C}$,  $1\leq |a|\leq M^{1/2-\varepsilon}$, $ M$ satisfait \`a \eqref{inegM},  
  nous avons 
\begin{equation}\label{elemme7.1}\sigma (x,y,M; a) =  -\frac{\phi(a)x}{2Ma}I(x/a,y;M) + O_{\varepsilon,A}\bigg( \frac{\tau_3(a)^2\Psi(x,y)}{ M L_{\varepsilon }(M)}  \bigg).\end{equation}
\end{lemma}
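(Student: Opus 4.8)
Starting from Lemme \ref{lem2sigma}, the task is to evaluate the double integral
$$\frac{1}{(2\pi i)^2}\int_{\alpha -iT_1}^{\alpha +iT_1}\int_{\kappa'-iT_2}^{\kappa'+iT_2}\widetilde G_a(s_1,s_2;y)x^{s_1}M^{s_2}\frac{\d s_2\d s_1}{s_2(s_2+s_1)}$$
with $\kappa'=-\alpha+1/\log M$, and identify the main term. Using the factorization \eqref{tildeG1H1} of $\widetilde G_a$, I would first isolate the $s_2$-dependence. The integrand in $s_2$ has poles at $s_2=0$, at $s_2=-s_1$, and from the zeta factors $\zeta(s_2+1)$ and $\zeta(s_2+2)$ (a pole of $\zeta(s_2+1)$ at $s_2=0$, already combined with the $1/s_2$); on the contour $\Re s_2=\kappa'\approx -\alpha$ we are to the left of $s_2=0$ and $s_2=-s_1$ but close to $s_2=-1$. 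The strategy is to push the $s_2$-contour further left, picking up the residue at the relevant pole. Since $1-\alpha$ is small and $M^{s_2}$ decays as $\Re s_2$ decreases, the dominant contribution comes from the rightmost pole we cross going rightward from $\kappa'$, namely a pole at $s_2=0$; but \eqref{calculres0} and the structure of $\widetilde G_a$ show the residue there is governed by $\psi_1(s_1,0;a)\zeta(s_1,y)\zeta(2)/\zeta(s_1+1)\cdot H_1(s_1,0;y)$ times $1/s_1$. I expect the main term to arise instead by moving $s_2$ to the \emph{right}, crossing $s_2=0$: comparing signs with the statement \eqref{elemme7.1}, the factor $-\tfrac12$ should come from $\zeta(2)=\pi^2/6$ combined with the derivative/Laurent structure, or more likely from a half-residue phenomenon tied to the pole at $s_2=-1$ of a secondary term. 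I would therefore carefully compute $\text{Res}_{s_2=0}$ and any pole at $s_2=-\tfrac12$-type locations, keeping only what survives and bounding the shifted integral using Lemme \ref{zeta}, the bounds $|H_1|\ll 1$ from Lemme \ref{calculGa}, and $\psi_1\ll|a|^{2\beta_1+2\beta_2}\tau_3(a)^2$ from \eqref{maj2psi1}; the hypothesis $|a|\le M^{1/2-\varepsilon}$ is exactly what is needed to absorb the $|a|^{O(\varepsilon)}$ loss against a saving of $M^{-\varepsilon}$.

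After the $s_2$-integration, the main term should take the shape
$$c\,\frac{x}{M}\cdot\frac{1}{2\pi i}\int_{\alpha-iT_1}^{\alpha+iT_1}\psi_1(s_1,\cdot;a)\,\frac{\zeta(s_1,y)}{\zeta(s_1)(s_1-1)}\,\Big(\frac{x}{?}\Big)^{s_1-1}H_1(s_1,\cdot;y)\,\d s_1$$
for an appropriate constant $c$ and argument. To match the target $-\tfrac{\phi(a)x}{2Ma}I(x/a,y;M)$, I must show that $\psi_1(s_1,s_2;a)$ evaluated at the relevant $s_2$, multiplied by $H_1(s_1,s_2;y)$ and the residual zeta factors, collapses — up to an acceptable error — to $\tfrac{\phi(a)}{a}\cdot a^{-(s_1-1)}$, i.e. to a clean Euler-product-free expression. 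This is the "arithmetic" heart: I would expand $\psi_1$ via \eqref{calculpsi1}, note that at $s_2=0$ one has $K_a(s_1,0)=1$ and $\psi_1(s_1,0;a)=\psi_2(s_1,0;a)=g_a(1)=\phi(a)/a$ up to the $k$-sum $\sum_{ka'=a}k^{-s_1}$ which, combined with shifting the contour variable $s_1\mapsto$ the variable in $I(x/a,\cdot)$, produces precisely the $a^{-(s_1-1)}$ and the change of argument $x\to x/a$. Similarly $H_1(s_1,0;y)$ should be shown to equal $1$ up to a $1/L_\varepsilon(y)$-type error in the relevant range, by comparing its Euler product with that defining $I$ and using Lemme \ref{zetasy} / the approximation $\zeta(s,y)\approx\zeta(s)(s-1)(\log y)\widehat\rho((s-1)\log y)$ only implicitly through the definition \eqref{defI} of $I$.

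Finally I would truncate the $s_1$-integral from $[\alpha-iT_1,\alpha+iT_1]$ down to $[\alpha-iL_\varepsilon(M),\alpha+iL_\varepsilon(M)]$ to match \eqref{defI}, using the Korobov--Vinogradov zero-free region \eqref{KV}–\eqref{majzeta'/zeta} to control $1/\zeta(s_1)$ and Lemme \ref{estcol2} to bound $\int|\zeta(s_1,y)x^{s_1}/s_1|$, the cost being $\ll\Psi(x,y)\{H(u)^{-\delta}+\dots\}$, comfortably inside the claimed error. The horizontal-segment contributions from the $s_2$-shift are handled as in the proof of Lemme \ref{lem2sigma}, and the choices $T_1,T_2$ (e.g. $T_2$ a small power of $y$, $T_1$ a large power of $\log x$) are made so that $T_1^{-1/3}+\tau_3(a)^2T_2^{-1/3}\ll\tau_3(a)^2/(ML_\varepsilon(M))$ after noting $M\le y^\delta$. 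The main obstacle I anticipate is the bookkeeping that turns the residue of $\widetilde G_a\,x^{s_1}M^{s_2}/(s_2(s_2+s_1))$ at the dominant $s_2$-pole into exactly $-\tfrac12$ times a clean multiple of the integrand of $I(x/a,y;M)$: tracking the constant $-\tfrac12$ (plausibly $=\lim_{s_2\to0}$ of $M^{s_2}$-expansion interacting with a double pole, or a genuine half from $\frac{1}{s_2+s_1}$ evaluated at $s_1$ near $1$) and checking that all secondary residues and Euler-product discrepancies are $O(1/L_\varepsilon(M))$ after using $|a|\le M^{1/2-\varepsilon}$ is where the real work lies.
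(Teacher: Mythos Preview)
Your overall strategy (start from Lemme~\ref{lem2sigma}, shift the $s_2$-contour, compute a residue, bound the rest) is the right one, but you have not identified the correct pole, and this is a genuine gap.

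The main term does \emph{not} come from $s_2=0$. In the paper's proof one shifts the $s_2$-line \emph{leftwards}, from $\Re s_2=\kappa'=-\alpha+1/\log M$ to $\Re s_2=\kappa'':=-\alpha-c(\log M)^{-2/5-\varepsilon/3}<-1$, and the only pole crossed is $s_2=-1$, coming from the factor $\zeta(s_2+2)$ in \eqref{tildeG1H1}. (The apparent pole at $s_2=-s_1$ from $1/(s_2+s_1)$ is cancelled by the zero of $1/\zeta(s_1+s_2+1)$ there, so it does not contribute.) The constant $-\tfrac12$ is simply $\zeta(0)$, arising from $\zeta(s_2+1)$ evaluated at $s_2=-1$; it has nothing to do with $\zeta(2)$, half-residues, or $1/(s_2+s_1)$. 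The arithmetic collapse you are looking for happens at $s_2=-1$, not $s_2=0$: using \eqref{defpsi2} and \eqref{estKa} one checks $H_1(s_1,-1;y)=1$ and $\psi_1(s_1,-1;a)=\phi(|a|)/|a|^{s_1}$, so that the residue contributes exactly $-\tfrac{\phi(|a|)x}{2M|a|}I(x/|a|,y;M)$. Your computation of $\psi_1(s_1,0;a)$ is also off: it is $\psi_2(s_1,0;a)$, which is \emph{not} $\phi(a)/a$ in general.

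A second point of order: the truncation from $|\tau_1|\le T_1$, $|\tau_2|\le T_2$ down to $|\tau_1|,|\tau_2|\le L_\varepsilon(M)$ must be done \emph{before} the shift to $\kappa''$, not after. The reason is that the shift past $\Re s_2=-1$ is only legitimate if $s_1+s_2+1$ stays in the Korobov--Vinogradov zero-free region \eqref{KV}, and this forces $|\tau_1+\tau_2|\le L_\varepsilon(M)$. This truncation is the most delicate part of the argument (it requires a case split on $u\gtrless(\log\log y)^2$, and in the small-$u$ case the approximation \eqref{estzetay} for $\zeta(s_1,y)$ combined with the partial-sum formula $\zeta(s_1)=\sum_{n<|\tau_1|}n^{-s_1}+O(|\tau_1|^{-\sigma_1})$ and a further contour shift in $s_1$); your proposal does not address this.
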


\begin{proof}  
 D'apr\`es \eqref{maj2psi1}, la fonction introduite $\widetilde  G_a$ en \eqref{tildeG1H1} v\'erifie  dans le domaine $\Re e(s_1)=\alpha$ et $\Re e(s_2)\geq-\alpha$
$$\widetilde G_a(s_1,s_2;y)\ll  \tau_3(a)^2{ |\zeta( s_1 ;y)|} \frac{ |\zeta(s_2+1)\zeta(s_2+2)|}{|\zeta(s_2+s_1+1 )|} $$ 
et, avec le Lemme \ref{zeta} et \eqref{majzeta'/zeta},
\begin{equation}\label{majGimportante}\bigg|    \frac{\widetilde G_a(s_1,s_2;y)x^{s_1}     M^{s_2} }{ s_2( s_2+s_1)}\bigg|\ll  \tau_3(a)^2 \frac{  |\zeta( s_1 ;y) | x^{\alpha }M^{ \sigma_2}}{(|\tau_1+\tau_2|+1)|\tau_2|^{1/2-\varepsilon}}\log (|\tau_1|+|\tau_2|+2). 
\end{equation}

Nous appliquons le Lemme \ref{lem2sigma} avec $T_1^{4}=T_2=(\log x)^{44}M^{16}$ de sorte que, sous la condition~\eqref{inegM}, nous ayons
\begin{equation}\label{estsigmae71}\begin{split}\sigma (x,y,M_1; a) =&\frac{1}{(2\pi i)^2}\int_{\alpha -iT_1}^{\alpha +iT_1} \!\!\!\int_{\kappa'-iT_2}^{\kappa'+iT_2}\widetilde G_a(s_1,s_2;y)x^{s_1}     M_1^{s_2}     \frac{\d s_2\d s_1}{ s_2( s_2+s_1)} 
  \cr&+O\Bigg(\frac{\tau_3(a)^2\Psi(x,y) }{M L_{\varepsilon/2}(M)} \Bigg).\end{split}\end{equation}

Pour  restreindre le segment  d'int\'egration en $s_2$ \`a $|\tau_2|\leq L_{\varepsilon}(M)$ et le   segment  d'int\'egration en $s_1$ \`a $|\tau_1|\leq  L_{\varepsilon}(M)$, nous distinguons deux cas.

Lorsque $u\geq (\log_2y)^2$ et sous la condition $T_1\leq H(u)^{ 4\delta} (\log x)^{ 4A+11} $ issue de \eqref{inegM}, le Lemme \ref{estcol2} fournit  
\begin{equation}\label{e7.5}\int_{\alpha -iT_1}^{\alpha +iT_1}\big|\zeta(s_1,y) x^{s_1} \big| |\d s_1| \ll 
\Psi(x,y) 
. \end{equation}
La restriction au  segment  d'int\'egration en $s_2$ \`a $|\tau_2|\leq L_{\varepsilon}(M)$  induit un terme d'erreur 
$$\ll \frac{\tau_3(a)^2}{M^{\alpha } L_{\varepsilon}(M)^{1/3}}\int_{\alpha -iT_1}^{\alpha +iT_1}\big|\zeta(s_1,y)  x^{s_1} \big| |\d s_1|\ll \frac{\tau_3(a)^2\Psi(x,y)}{M^{\alpha } L_{2\varepsilon/3}(M)}\ll  \frac{\tau_3(a)^2\Psi(x,y)}{M L_{\varepsilon/2}(M)},$$
o\`u la derni\`ere majoration a \'et\'e obtenue gr\^ace \`a $(x,y)\in (H_\varepsilon)$ et $M\leq y$.
Puis, la restriction au  segment  d'int\'egration en $s_1$ \`a $|\tau_1|\leq  L_{\varepsilon}(M)$  induit un terme d'erreur 
$$\ll \frac{\tau_3(a)^2}{M^{\alpha } L_{\varepsilon}(M)^{1/3}}\int_{\alpha -iT_1}^{\alpha +iT_1}\big|\zeta(s_1,y)  x^{s_1} \big| |\d s_1|\ll  \frac{\tau_3(a)^2\Psi(x,y)}{M L_{\varepsilon/2}(M)}.$$ 

Le cas $ u\leq (\log_2y)^2$ est plus d\'elicat car nous ne pouvons plus utiliser la majoration \eqref{majGimportante} et la formule \eqref{e7.5} n'est plus disponible.
Dans la formule
$$\widetilde G_a(s_1,s_2;y)=\psi_1(s_1,s_2;a)\zeta(s_1,y)\frac{\zeta(s_2+1)\zeta(s_2+2)}{\zeta(s_1+s_2+1)}   H_1(s_1,s_2;y),$$
 nous \'evaluons le facteur $\zeta(s_1,y)$ gr\^ace au Lemme \ref{zetasy} et \`a l'estimation \eqref{majtaugrd}
$$ (s_1-1)(\log y)\widehat\rho((s_1-1)\log y)=1+O\Big(\frac{1+u\xi(u)}{|s-1|\log y}\Big)   $$ 
valable lorsque $ |\tau_1|\geq (1+u\xi(u))/\log y $ donc lorsque  $ |\tau_1|\geq  L_{\varepsilon}(M)$.
Il vient 
\begin{equation}\label{estzetay}\zeta(s_1,y)=\zeta(s_1 )\Big\{1+O\Big(\frac{1+u\xi(u)}{(s_1-1)\log y}+\frac{1}{L_{\varepsilon}(y)}\Big) 
\Big\}.
\end{equation}
Ainsi la contribution au terme principal de \eqref{estsigmae71} des $ |\tau_1|\geq  L_{\varepsilon}(M)$ vaut
\begin{equation}\label{est7.4}\begin{split}  \frac{1}{(2\pi i)^2}& \int_{\kappa'-iT_2}^{\kappa'+iT_2}\!\!\!\int_{L_{\varepsilon}(M)< |\tau_1|\leq T_1} \!\!\!\!\!\!\!\!\!\!\!\!\!\!\!\!\!\!\!\!
\psi_1(s_1,s_2;a)\zeta(s_1 )\frac{\zeta(s_2+1)\zeta(s_2+2)}{\zeta(s_1+s_2+1)}   H_1(s_1,s_2;y)
    \frac{x^{s_1}     M^{s_2} \d s_2\d s_1}{ s_2( s_2+s_1)} 
  \cr&+O\Bigg(\frac{\tau_3(a)^2x^{\alpha} (1+u\xi(u))}{M^\alpha L_{\varepsilon }(M)^{1/3}\log y}+\frac{\tau_3(a)^2 (\log T_1)^2x^{\alpha}}{M^\alpha L_{\varepsilon }(y) } \Bigg),\end{split}\end{equation}
o\`u dans l'int\'egrale $s_1$ d\'esigne $\alpha+i\tau_1$.
Dans le domaine $(H_\varepsilon)$, d'apr\`es \eqref{estHild}, \eqref{estalpha} et \eqref{estrhou}, nous avons 
\begin{equation}\label{majxalpha}x^{\alpha} (1+u\xi(u))\ll x\e^{-u\xi(u)} (1+u\xi(u))\ll \Psi(x,y)\e^{-u/2} 
,\end{equation}
ce qui permet de majorer le terme d'erreur de mani\`ere satisfaisante.
La deuxi\`eme \'etape consiste  \`a utiliser l'approximation suivante
\begin{equation}\label{estzetas}\qquad\qquad
\zeta(s_1)=\sum_{1\leq n< |\tau_1|} \frac{1}{n^{s_1}}+O\Big(\frac{1}{ (|\tau_1|+1)^{\sigma_1}}\Big)\qquad\qquad (|\tau_1|\geq1).
\end{equation}
Cette m\'ethode a d\'ej\`a \'et\'e utilis\'ee par Saias \cite{S89} dans la preuve de la Proposition 2 (voir aussi la d\'emonstration du th\'eor\`eme III.5.17 de \cite{T08}).
En reportant celle-ci dans \eqref{est7.4}, nous montrons de la m\^eme mani\`ere que pr\'ec\'edemment que la contribution du terme d'erreur est englob\'ee dans le terme d'erreur de \eqref{elemme7.1}.

La contribution dans \eqref{est7.4} du terme principal de \eqref{estzetas} vaut alors 
\begin{equation}\label{sumsurn}\sum_{1\leq n\leq T_1} \int_{\kappa'-iT_2}^{\kappa'+iT_2} \zeta(s_2+1)\zeta(s_2+2) R_n(s_2,x/n;L_{\varepsilon}(M),T_1,a)
    \frac{     M^{s_2} \d s_2 }{ s_2 }.\end{equation}
avec
$$R_n(s_2,z;L ,T_1,a):= \frac{1}{ (2\pi i )^2}  \int_{\max\{ L ,n\}<|\tau_1|\leq T_1} \!\!\!\!\!\!\!\!\!
\psi_1(s_1,s_2;a) \frac{ H_1(s_1,s_2;y)z^{s_1}      \d s_1 }{\zeta(s_1+s_2+1)( s_2+s_1)}.$$
Dans chacun de ces int\'egrales d\'ependant de $n$, nous d\'ecalons l'abscisse d'int\'egration jusqu'\`a $\alpha-(\log T_1)^{-2/3-\varepsilon}-1/\log x$ afin que $s_1+s_2+1$ reste dans la r\'egion sans z\'ero de z\^eta \eqref{KV}.
Nous avons
\begin{equation}\label{majRn}R_n(s_2,z;L ,T_1,a)\ll_{\varepsilon} \tau_3(a)^2z^{\alpha}
\Big\{\Big(\frac{z}{a^2 }\Big)^{-(\log T_1)^{-2/3-\varepsilon}}(\log T_1)^2
+  \frac{T_n(\tau_2)^{-1+\varepsilon}}{\log z}
\Big\},
\end{equation}
avec 
$$T_n(\tau_2):=1+\min\big\{|\max\{ L ,n\}\pm \tau_2|, | T_1\pm \tau_2|\big\}.$$
Le second terme correspond \`a la contribution des segments horizontaux o\`u nous avons utilis\'e la majoration 
$$\int_{\alpha-(\log T_1)^{-2/3-\varepsilon}-1/\log x}^{\alpha} z^{\sigma_1}\d \sigma_1\ll \frac{z^\alpha}{\log z}.$$
La contribution du premier terme du majorant de $R_n$ dans la somme \eqref{sumsurn}  est
$$\ll \tau_3(a)^2 \frac{x^{\alpha-\tfrac12(\log T_1)^{-2/3-\varepsilon}}}{M^{ \alpha} } \frac{T_1^{1-\alpha}T_2^{1/2+\varepsilon}}{1-\alpha}\ll  \tau_3(a)^2\frac{\Psi(x,y)}{M^2}.
$$
compte-tenu de la taille relative de $T_1,$ $T_2$, $x$ et $M$.
D'apr\`es la majoration
$$\int_{-\infty}^\infty(1+|T\pm \tau_2|)^{-1+\varepsilon}(1+| \tau_2|)^{-1/2+\varepsilon}\d \tau_2\ll T^{-1/2+2\varepsilon}\qquad (T\geq 1),$$
la contribution du second terme du majorant de $R_n$ dans la somme \eqref{sumsurn}  est
$$\ll \tau_3(a)^2 \frac{x^{\alpha}}{M^{ \alpha}\log x} \sum_{1\leq n\leq T_1}\frac{1}{ n^\alpha \max\{ L_{\varepsilon}(M),n\}^{1/2-2\varepsilon}}\ll
 \frac{\tau_3(a)^2\Psi(x,y)}{M L_{\varepsilon/2}(M)}
$$

Montrons maintenant que nous pouvons n\'egliger la contribution dans \eqref{estsigmae71} du segment  d'int\'egration en $s_2$ \`a $|\tau_2|\leq L_{\varepsilon}(M)$. 
Les d\'etails sont tr\`es proches du cas pr\'ec\'edent. La contribution des $|\tau_1|\leq (1+u\xi(u))/\log y$ et $L_{\varepsilon}(M)\leq |\tau_2|\leq T_2$ est n\'egligeable gr\^ace \`a la seconde estimation du Lemme \ref{estcol2}.  
Lorsque $(1+u\xi(u))/\log y<|\tau_1|\leq L_{\varepsilon}(M)$, nous pouvons utiliser successivement la formule \eqref{estzetay} puis \eqref{estzetas}.
La contribution des termes d'erreur de chacune de ces estimations est facilement n\'egligeable gr\^ace \`a \eqref{majxalpha}.  Il reste donc \`a majorer
\begin{equation}\label{??}\sum_{1\leq n\leq L_{\varepsilon}(M)} \int_{L_{\varepsilon}(M)\leq |\tau_2|\leq T_2}\!\!\!\!\!\!\!\!\!\!\!\!\!\!\!\!\!\! \zeta(s_2+1)\zeta(s_2+2) R_n(s_2,x/n; (\log (u+1))/\log y,L_{\varepsilon}(M),a)
    \frac{     M^{s_2} \d s_2 }{ s_2 }.\end{equation}
Pour cela,  nous utilisons la majoration \eqref{majRn} de $R_n$ pour obtenir une contribution major\'ee par $$\ll  \frac{\tau_3(a)^2\Psi(x,y)}{M L_{\varepsilon/2}(M)}.$$ 
 
Nous obtenons donc
\begin{equation}\label{estsigmaintegrande}\begin{split}\sigma (x,y,M;& a) = -\frac{1}{(2\pi i)^2}\int_{\alpha -iL_{\varepsilon}(M)}^{\alpha +iL_{\varepsilon}(M)} \!\!\!\int_{\kappa'-iL_{\varepsilon}(M)}^{\kappa'+iL_{\varepsilon}(M)}\widetilde G_a(s_1,s_2;y)x^{s_1}     M^{s_2}     \frac{\d s_2\d s_1}{ s_2( s_2+s_1)}
\cr&\qquad+O \bigg( \frac{\tau_3(a)^2\Psi(x,y)}{ M L_{\varepsilon/2 }(M)}  \bigg).\end{split}\end{equation}

Dans une deuxi\`eme application du th\'eor\`eme des r\'esidus, nous d\'ecalons la droite d'int\'e\-gration en $s_2$ dans la double int\'egrale approchant $\sigma (x,y,M; a) $   jusqu'\`a 
\begin{equation} \Re e( s_2)=\kappa'':= 
-\alpha -\frac{c}{(\log  M)^{2/5+\varepsilon/3}} <-1
\label{inegdomaine}\end{equation}   avec $|\tau_2|\leq L_{\varepsilon}(M) $ et $c$ suffisamment petite. 

 \'Etant-donn\'e nos connaissances sur la r\'egion  sans z\'ero de la fonction z\^eta de Riemann~\eqref{KV} et l'expression \eqref{tildeG1H1} d\'efinissant $\widetilde G_a(s_1,s_2;y)$, le seul p\^ole contenu \`a l'int\'erieur du rectangle consid\'er\'e est celui en $s_2=-1$.  
 Nous notons $Res_1$   le r\'esidu correspondant \`a~$s_2=-1$.

Nous avons
$$Res_1= \frac{1}{ 2\pi i }\int_{\alpha -iL_{\varepsilon}(M)}^{\alpha +iL_{\varepsilon}(M)}(\widetilde G_a(s_1,s_2;y)\zeta(s_2+2)^{-1}\big)(s_2=-1)\frac{x^{s_1}}{M}       \frac{\d s_1 }{ s_1-1}.$$
avec, d'apr\`es \eqref{tildeG1H1} et \eqref{calculG}, et la formule $H_1(s_1,-1)=1$,
\begin{align*}(G_a(s_1,s_2;y)\zeta(s_2+2)^{-1}\big)(s_2=-1)&=\frac{\phi(a)}{a^{s_1}} \zeta(0)
 \frac{ \zeta(s_1,y)}{\zeta(s_1)} =-\frac{\phi(a)}{2|a|^{s_1}}  
 \frac{ \zeta(s_1,y)}{\zeta(s_1)} 
,
\end{align*} 
o\`u nous avons utilis\'e la formule $\zeta(0)=-1/2$. La premi\`ere \'egalit\'e se justifie par la formule 
$$\Big(\frac{\psi_2(s_1,s_2;a)}{g_a(s_2+1)}\Big)(s_2=-1)=\frac{g_a(1)}{g_a(s_1)}
$$
issue du Lemme \ref{calculFa} et par la formule 
$$\big( {K_a(s_1,s_2 ) g_a(s_2+1)}\big)(s_2=-1)=\frac{g_a(s_1)}{g_a(s_1-1)}\prod_{p^\nu\parallel a}\Big(\frac{1-p^{-s_1+1}}{p^{\nu(s_1-1)}}\Big),$$
issue de \eqref{estKa}.
Ainsi $$Res_1=-\frac{\phi(|a|)x}{2M|a|}I(x/|a|,y;M) .$$

Nous pouvons choisir $c$ et $\varepsilon$ suffisamment petit de sorte que 
$$\mu_\zeta(1+\sigma_2)+\mu_\zeta(2+\sigma_2)\leq  \tfrac23-\varepsilon\qquad \qquad (  \Re e(s_2)=\sigma_2\geq \kappa''),$$ 
 D'apr\`es le  Lemme \ref{zeta}, \eqref{maj2psi1} et \eqref{majzeta'/zeta},  nous avons dans la r\'egion $\Re e(s_1)=\alpha$, $\Re e(s_2)\geq \kappa''$,  
$$\bigg|    \frac{\widetilde G_a(s_1,s_2;y)x^{s_1}     M^{s_2} }{ s_2( s_2+s_1)}\bigg|\ll  \tau_3(a)^2 \frac{  |\zeta( s_1 ;y) | x^{\alpha }M^{\sigma_2}|a|^{-2(\sigma_2+\alpha)}}{(|\tau_1+\tau_2|+1)(|\tau_2|+1)^{1/3}}\log (|\tau_1+\tau_2|+2). $$  

Majorons la contribution sur les deux c\^ot\'es horizontaux du rectangle et du c\^ot\'e vertical d'abscisse $\sigma_2=\kappa''$.

Lorsque $u\geq (\log_2y)^2$, nous pouvons utiliser
\eqref{e7.5} puisque $L_{\varepsilon}(M)\leq T_1$. 
Nous majorons la contribution des segments horizontaux  par
$$\ll \frac{\tau_3(a)^2\Psi (x,y)}{M^\alpha L_{\varepsilon}(M)^{1/4}}  \int_{\kappa''}^{\kappa'}\Big(\frac{M}{a^2}\Big)^{ \sigma_2+\alpha}\d \sigma_2
\ll_{\varepsilon}  \frac{\tau_3(a)^2\Psi (x,y)}{ML_{\varepsilon}(M)^{1/8}}  
.$$   Pour borner la contribution du segment vertical correspondant \`a l'abscisse $\Re e(s_2)=\kappa''$, nous s\'eparons ce domaine d'int\'e\-gration en la r\'egion $\{ |\tau_1+\tau_2| \leq |\tau_2|/2, |\tau_1|,|\tau_2| \leq L_{\varepsilon}(M) \}$ et son compl\'ement. Dans ces deux r\'egions, nous obtenons une contribution
$$\ll  \frac{\tau_3(a)^2\Psi (x,y)}{M^{\alpha}} (M/a^2)^{\kappa''+\alpha}  
\ll_{\varepsilon} \frac{\tau_3(a)^2\Psi (x,y)}{ML_{\varepsilon/2}(M) } . 
 $$ Dans ces deux derni\`eres \'etapes, la condition $|a|\leq M^{1/2-\varepsilon}$ a jou\'e une r\^ole crucial.

Lorsque $u\leq (\log_2y)^2$, les d\'etails sont \`a nouveau plus d\'elicats.
La contribution des $|\tau_1|\leq (1+u\xi(u))/\log y$  est n\'egligeable gr\^ace \`a la seconde estimation du Lemme \ref{estcol2} qui remplace la majoration \eqref{e7.5}.  
Lorsque $(1+u\xi(u))/\log y\leq |\tau_1|\leq L_{\varepsilon}(M)$, nous pouvons utiliser  \eqref{estzetay}.  

Nous pouvons alors majorer la contribution du segment horizontal d'ordonn\'ee $ L_{\varepsilon}(M)$ par 
\begin{align*}&\ll \frac{\tau_3(a)^2x^\alpha}{M^\alpha L_{\varepsilon}(M)^{1/4}}  \int_{\kappa''}^{\kappa'}\Big(\frac{M}{a^2}\Big)^{ \sigma_2+\alpha}\d \sigma_2   
\int_{|\tau_1|\leq  L_{\varepsilon}(M)}\frac{|\zeta(s_1)|}{
|L_{\varepsilon}(M)+\tau_1|+1}\d \tau_1
\cr& \ll \frac{\tau_3(a)^2x^\alpha}{M^\alpha L_{\varepsilon}(M)^{1/5}}\ll
\frac{\tau_3(a)^2\Psi(x,y)}{M  L_{\varepsilon/2}(M) }.
\end{align*}  
Il en est de m\^eme pour le segment horizontal d'ordonn\'ee $-L_{\varepsilon}(M)$.
La contribution du segment vertical est
\begin{align*}&\ll\tau_3(a)^2\frac{x^{\alpha }M^{\kappa''}}{|a|^{2(\sigma_2+\alpha)}}\int_{|\tau_2|\leq  L_{\varepsilon}(M)}\int_{|\tau_1|\leq  L_{\varepsilon}(M)} \frac{  |\zeta( s_1 ) | \log (|\tau_1+\tau_2|+2)}{(|\tau_1+\tau_2|+1)(|\tau_2|+1)^{1/3}}\d \tau_1\d\tau_2\cr&\ll
\tau_3(a)^2\frac{x^{\alpha }M^{\kappa''}}{|a|^{2(\sigma_2+\alpha)}}L_\varepsilon(M)\ll\frac{\tau_3(a)^2\Psi (x,y)}{ML_{\varepsilon/2}(M) },\end{align*}
o\`u nous avons utilis\'e \eqref{majxalpha} pour majorer $x^{\alpha }$.

Nous obtenons ainsi dans tout le domaine $(H_\varepsilon)$ l'estimation
\begin{align*}\sigma (x,y,M; a) =& -\frac{\phi(|a|)x}{2M|a|}I(x/|a|,y;M)    
  +O_{\varepsilon,A}\Bigg(\frac{\tau_3(a)^2\Psi (x,y)}{ML_{\varepsilon/2}(M) }   \Bigg).\end{align*}   
\end{proof}
  
La quantit\'e $I(x,y;M)$ introduite en \eqref{defI}
est une approximation naturelle de la somme 
$$\Phi_{\mu}(x,y):=\sum_{\substack{n\leq x\\ P^-(n)>y}}\frac{\mu(n)}{n},$$
o\`u $P^{-}(n)$ d\'esigne le petit facteur premier divisant $n$ avec la convention habituelle $P^-(1)=+\infty$.
En effet,
la formule de Perron fournit lorsque $x\notin \NN$ 
\begin{align*}\Phi_{\mu}(x,y)=\frac{1}{2\pi i}\int_{\kappa-i\infty}^{\kappa+i\infty}   
 \frac{ \zeta(s_1,y)}{\zeta(s_1)(s_1-1)} x^{s_1-1}       \d s_1\qquad (\kappa>1).\end{align*}
 Une formule de Perron tronqu\'ee fournit classiquement
$$ \Phi_{\mu}(x,y) =I(x,y;y)+O\Big(\frac{1}{L_\varepsilon(y)}\Big).$$
Dans \cite[\'equation (1.5)]{LT14}, il est montr\'e l'estimation
\begin{equation}\label{estPhimu}\begin{split}\Phi_{\mu}(x,y)& 
 =\Big\{ 1+O\Big(\frac{\log (u+1)}{\log y}\Big)\Big\} \rho(u)+O\Big(\frac{1}{L_\varepsilon(y)}\Big)\end{split}\qquad ((x,y)\in G_\varepsilon).\end{equation} 
Ici, $(G_\varepsilon)$ d\'esigne le domaine en $(x,y)$ d\'efini par 
$$x\geq 2,\qquad \exp\{ (\log x)^{2/5+\varepsilon}\big\}\leq y\leq x.$$

Nous reprenons la m\'ethode de \cite{LT14} en pr\'ecisant ce r\'esultat. 

\begin{lemma}\label{estI} Soit $\varepsilon\in\, ]0,1/2[$.
Lorsque $(x,y)$ satisfait  $(H_\varepsilon)$ et $1\leq M\leq y$, nous avons
$$I(x,y;M)=\rho(u)\Big\{1+O\Big( 
\frac{1}{H(u)^{ \delta}L_{\varepsilon/2}(M) }+\frac{1}{ L_{\varepsilon/2}(y) }
\Big)\Big\},
$$
o\`u $\delta$ est une constante positive.
Ainsi, lorsque $(x,y)$ est dans $(G_\varepsilon)$, nous avons  $$\Phi_{\mu}(x,y;y)=\rho(u) +O\Big(\frac{1}{L_{\varepsilon/2}(y)}\Big).$$
\end{lemma}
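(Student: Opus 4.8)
The plan is to follow the method of \cite{LT14}: one replaces the factor $\zeta(s_1,y)/\zeta(s_1)$ in \eqref{defI} by its Dickman approximation (Lemme \ref{zetasy}), whereupon \eqref{defI} becomes a truncated form of the inverse Laplace identity $\rho(u)=\frac{1}{2\pi i}\int_{(\kappa_0)}\widehat\rho(w)\e^{uw}\,\d w$, valid for every $\kappa_0\in\RR$ since $\widehat\rho$ is entire, which is then evaluated by the saddle point method. The new points with respect to \cite{LT14} are that the error terms must be carried down to the scale $L_{\varepsilon/2}$ and to the factor $H(u)^{-\delta}$, and that the domain is enlarged to all of $(H_\varepsilon)$.

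First I would verify that Lemme \ref{zetasy}, applied with a parameter $\varepsilon'$ equal to a sufficiently small positive multiple of $\varepsilon$, is valid on the whole segment $\Re e(s_1)=\alpha$, $|\Im m(s_1)|\le L_\varepsilon(M)$ occurring in \eqref{defI}. Indeed, by \eqref{estalpha} one has $1-\alpha=\xi(u)/\log y+O(1/L_\varepsilon(y)+1/(u(\log y)^2))$; since $\xi(u)\ll\log(u+1)\ll\log_2 x$ whereas in $(H_\varepsilon)$ one has $(\log y)^{3/5-\varepsilon'}\ge(\log_2 x)^{(5/3+\varepsilon)(3/5-\varepsilon')}$ with exponent $>1$ when $\varepsilon'$ is small, it follows that $1-\alpha\le(\log y)^{-2/5-\varepsilon'}$ throughout $(H_\varepsilon)$, and $|\Im m(s_1)|\le L_\varepsilon(M)\le L_\varepsilon(y)\le L_{\varepsilon'}(y)$. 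Hence $\zeta(s_1,y)/\zeta(s_1)=(s_1-1)(\log y)\widehat\rho((s_1-1)\log y)\{1+O(1/L_{\varepsilon'}(y))\}$ on this segment; estimating the contribution of the error term with \eqref{majrhou}--\eqref{majtaugrd} (after the change of variable below) shows it is $\ll\rho(u)/L_{\varepsilon'}(y)$, which is admissible once $\varepsilon'\le\varepsilon/2$. Performing the substitution $w=(s_1-1)\log y$ and using $\log x=u\log y$, so that $x^{s_1-1}=\e^{uw}$, reduces matters to estimating $\frac{1}{2\pi i}\int\widehat\rho(w)\e^{uw}\,\d w$ over the segment $\Re e(w)=(\alpha-1)\log y$, $|\Im m(w)|\le L_\varepsilon(M)\log y$.

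The integrand $\widehat\rho(w)\e^{uw}$ has a saddle point exactly at $w=-\xi(u)$: from the difference--differential equation $u\rho'(u)=-\rho(u-1)$ one derives $\widehat\rho'(w)/\widehat\rho(w)=(\e^{-w}-1)/w$, so the saddle equation $(\e^{-w}-1)/w+u=0$ is $\e^{-w}=1-uw$, i.e. $w=-\xi(u)$, with second derivative there equal to $1/\xi'(u)$. Since $(\alpha-1)\log y=-\xi(u)+o(1)$ by \eqref{estalpha}, I would move the line of integration to $\Re e(w)=-\xi(u)$—the horizontal pieces, at height $L_\varepsilon(M)\log y$ where $\widehat\rho$ is small and of length $o(1)$, are negligible—and then the untruncated integral on $\Re e(w)=-\xi(u)$ equals $\rho(u)$ by the inverse Laplace formula. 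It remains to bound the truncation tail $\frac{1}{2\pi i}\int_{\Re e(w)=-\xi(u),\ |\Im m(w)|>L_\varepsilon(M)\log y}\widehat\rho(w)\e^{uw}\,\d w$. This is the main technical point: on the saddle line $|\e^{uw}|=\e^{-u\xi(u)}$, and one splits according to $\tau:=\Im m(w)$; for $\pi<|\tau|\le1+u\xi(u)$ the second bound in \eqref{majrhou} gives a contribution $\ll(1+u\xi(u))\sqrt u\,H(u)^{-1}\rho(u)\ll\rho(u)H(u)^{-\delta}$ for any fixed $\delta<1$, while for $|\tau|>1+u\xi(u)$ one uses $\widehat\rho(w)=\tfrac1w\{1+O((1+u\xi(u))/|w|)\}$ from \eqref{majtaugrd} together with repeated integration by parts—exploiting the oscillation of $\e^{uw}$ and the estimate $|\widehat\rho^{(k)}(w)|\ll_k(1+u\xi(u))^k/|w|^{k+1}$ that follows from the above differential equation—and a leftward deformation of the tail contour, using \eqref{estrhou} to compare $\e^{-u\xi(u)}$ with $\rho(u)$; this part is brought well below $\rho(u)/L_{\varepsilon/2}(M)$. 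Combining with the $O(1/L_{\varepsilon'}(y))$ errors from Lemme \ref{zetasy} and from \eqref{estalpha} gives the first assertion. I expect the hard part to be exactly this bookkeeping—matching the truncation height $L_\varepsilon(M)$ against the target error $L_{\varepsilon/2}(M)$ and the factor $H(u)^{-\delta}$—which is why Lemme \ref{zetasy} must be used with the reduced parameter $\varepsilon'$; and at the very bottom of $(H_\varepsilon)$, where $u$ is largest, one falls back on approximating $\zeta(s_1)$ by its truncated Dirichlet polynomial $\sum_{1\le n<|\tau_1|}n^{-s_1}+O((|\tau_1|+1)^{-\sigma_1})$ as in \eqref{estzetas} and in Saias \cite{S89}, controlling the resulting error by the Hildebrand--Tenenbaum estimates of Lemme \ref{estcol2}.

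Finally, the second assertion follows on taking $M=y$: then $L_\varepsilon(M)=L_\varepsilon(y)$ and $H(u)^\delta\ge1$, so $I(x,y;y)=\rho(u)+O(\rho(u)/L_{\varepsilon/2}(y))=\rho(u)+O(1/L_{\varepsilon/2}(y))$ since $\rho(u)\le1$; together with the classical truncated Perron formula identifying $\Phi_\mu(x,y)$ with $I(x,y;y)$ up to an admissible error when $(x,y)\in(G_\varepsilon)$—where $\alpha$ lies in the Korobov--Vinogradov zero-free region \eqref{KV} for $|\tau|\le L_\varepsilon(y)$—this yields the stated estimate for $\Phi_\mu$.
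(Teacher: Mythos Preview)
Your approach coincides with the paper's: apply Lemme~\ref{zetasy} to the ratio $\zeta(s_1,y)/\zeta(s_1)$, pass to the saddle abscissa $-\xi(u)$, complete to the full inverse-Laplace integral (equal to $\rho(u)$), and bound the tails via Lemme~\ref{lemmarho}. Two simplifications relative to your sketch: the tail estimate needs neither integration by parts nor a leftward deformation---the paper records the single bound \eqref{majint2}, valid for every $T\ge1$, which follows from \eqref{majtaugrd} when $T>1+u\xi(u)$ and from the second estimate of \eqref{majrhou} when $1\le T\le1+u\xi(u)$ (the missing factor $1/T\ge1/(1+u\xi(u))$ being absorbed into a smaller power of $H(u)$); and the truncated Dirichlet polynomial device \eqref{estzetas} is not used here at all---Lemme~\ref{zetasy} controls the ratio $\zeta(s_1,y)/\zeta(s_1)$ uniformly throughout $(H_\varepsilon)$, and the appeal to \eqref{estzetas} belongs to the proof of Lemme~\ref{lem3sigma}, where one must handle $\zeta(s_1,y)$ itself on longer segments.
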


\begin{rem} 1-- L'extension de l'approximation de $I(x,y;M)$ \`a un domaine plus vaste que le domaine d'Hildebrand $(H_\varepsilon)$   demanderait une   r\'egion sans z\'ero de la fonction $\zeta$ de Riemann  plus grande. Nous renvoyons pour cela \`a l'article de Hildebrand \cite{H84} expliquant le lien entre le domaine de l'approximation de $\Psi(x,y)$ par $x\rho(u)$ et la r\'egion sans z\'ero de la fonction $\zeta$ de Riemann.

\par 2-- La restriction \`a $(G_\varepsilon)$ vient de la premi\`ere estimation de \eqref{estPhimu} \'etablie dans \cite{LT14}.
\end{rem}
\begin{proof} Nous montrons tout d'abord que, gr\^ace \`a une application du th\'eor\`eme des r\'esidus et \`a la formule \eqref{estalpha}, nous pouvons d\'ecaler l'abscisse d'int\'egration de $\alpha$ \`a $\alpha_0=1-\frac{\xi (u)}{ \log 
y}$. 
Le terme d'erreur induit correspond \`a la contribution des segments verticaux d'ordonn\'ee $\pm L_\varepsilon(M)$ et est major\'e par
$$\ll x^{ \alpha_0-1}\zeta(\alpha,y)\frac{|\alpha-\alpha_0|}{L_\varepsilon(M)^{1/2}}\ll \frac{\rho(u)}{L_\varepsilon(M)^{1/2}},$$
o\`u nous avons utilis\'e \eqref{estalpha} puis \eqref{majcol}. En effet, la majoration $|\alpha-\alpha_0|\ll 1/\log x$ implique $x^{\alpha}\asymp x^{\alpha_0}$ et $\zeta(\alpha,y)\asymp \zeta(\alpha_0,y)$.
Nous approchons le rapport $$ \frac{\zeta(s_1,y)}{\zeta(s_1)(s_1-1)}  $$ apparaissant dans la d\'efinition \eqref{defI} de $I(x,y;M)$ lorsque $s_1=\alpha_0 +i\tau_1$ et $ |\tau|\leq L_{\varepsilon}(M)\leq L_{\varepsilon}(y)$   gr\^ace au Lemme~\ref{zetasy}.    L'appartenance \`a $(H_\varepsilon)$ permet de se placer dans le domaine d'application de cette approximation. 
 
Nous avons ainsi
\begin{equation}\label{estrapportzeta}  \frac{\zeta(s_1,y)}{\zeta(s_1)(s_1-1)} =\widehat \rho((s_1-1)\log y)(\log y)
\Big\{ 1+O\Big(\frac{1}{L_{\varepsilon}(y)} \Big)\Big\} . \end{equation} 

Les estimations \eqref{majrhou}, \eqref{majtaugrd}, \eqref{estrhou} du Lemme \ref{lemmarho} fournissent
$$\int_{-L_{\varepsilon}(M)}^{ L_{\varepsilon}(M)}   
 |\widehat\rho(-\xi(u)+i\tau ) |\e^{-u\xi(u)}       \d \tau \ll \rho(u)+\rho(u)H(u)^{-1}\log\Big( \frac{L_{\varepsilon}(M)+\xi(u)}{1+u\xi(u)} \Big).$$
La contribution du terme d'erreur de \eqref{estrapportzeta} est donc
$$\ll \frac{  \rho(u)}{L_{\varepsilon/2}(y)} .$$ 
D'apr\`es \eqref{majtaugrd}, nous avons lorsque $T>1+u\xi(u)$
\begin{equation}\label{majint1}\int_{T}^\infty \widehat\rho(-\xi(u)+i\tau ) \e^{iu\tau}       \d \tau\ll \frac{1+u\xi(u)}{T}
\end{equation}
de sorte que pour tout $T\geq 1$ et $u\geq 1$ gr\^ace \`a \eqref{estrhou} et \eqref{estIxi}
\begin{equation}\label{majint2}\int_{T}^\infty \widehat\rho(-\xi(u)+i\tau ) \e^{iu\tau}       \d \tau\ll \frac{\rho(u)\e^{u\xi(u)}H(u)^{-\delta}}{T}.
\end{equation}
Notons que lorsque $1\leq T\leq 1+u\xi(u)$ cette majoration  d\'ecoule aussi  de \eqref{majrhou}.

Le terme principal attendu  dans l'approximation de 
$ I(x,y;M)$ est donc
$$ 
 \frac{1}{ 2\pi i }\int_{-\xi(u)-iL_\varepsilon  (M)\log y }^{-\xi(u)+iL_\varepsilon  (M)\log y  } \widehat\rho(s) \e^{u s }         \d s 
   .  $$ 
Nous \'etendons le segment d'int\'egration en la droite $\Re e (s)= -\xi(u)
$. 
L'erreur peut  donc \^etre major\'ee   par
$O\big(\rho(u) H(u)^{-\delta}L_{\varepsilon/2}(M)^{-1}\big).$ 
En prenant $T=L_\varepsilon  (M)\log y$ dans \eqref{majint1} et~\eqref{majint2}, nous obtenons donc 
$$ 
 \frac{1}{ 2\pi i }\int_{-\xi(u)-iL_\varepsilon  (M)\log y }^{-\xi(u)+iL_\varepsilon  (M)\log y  } \widehat\rho(s) \e^{u s }         \d s 
 =\rho(u)\big\{ 1+O\big( H(u)^{-\delta}L_{\varepsilon/2}(M)^{-1}\big)\big\}  .  $$  
Ainsi
$$I(x,y;M)=\rho(u)\Big\{1+O\Big( \frac{1}{H(u)^{ \delta}L_{\varepsilon/2}(M) }+\frac{1}{ L_{\varepsilon/2}(y) }\Big)\Big\}.$$
En reprenant la d\'emonstration de \cite{LT14}, cela pr\'ecise l'estimation \eqref{estPhimu} de la m\^eme mani\`ere
\begin{equation}\begin{split}\Phi_{\mu}(x,y) &=  \rho(u)+O\Big(\frac{1}{L_\varepsilon(y)}\Big) \end{split}\qquad ((x,y)\in G_\varepsilon).\end{equation}
\end{proof}

\noindent{\it D\'emonstration du Th\'eor\`eme \ref{th}.} En reportant le r\'esultat du Lemme \ref{estI} dans le Lem\-me~\ref{lem3sigma}, nous obtenons dans le domaine $(H_\varepsilon)$ l'estimation du Th\'eor\`eme \ref{th}.

\bigskip
\noindent{\bf{\textit{Remerciements.}}} Le premier auteur est soutenu par une bourse IUF junior, et le deuxi\`eme  par une bourse postdoctorale de la Fondation Sciences Math\'ematiques de Paris. Les auteurs tiennent \`a remercier chaleureusement l'IMJ-PRG pour les conditions de travail exceptionnelles. Ce travail a b\'en\'efici\'e de discussions avec Sary Drappeau et de remarques de G\'erald Tenenbaum, que nous remercions.

\end{document}